\documentclass[11pt,reqno]{amsart}
\usepackage[utf8x]{inputenc}

\usepackage{hyperref}
\hypersetup{
    colorlinks=true,       % false: boxed links; true: colored links
    linkcolor=blue,          % color of internal links
    citecolor=blue,        % color of links to bibliography
    filecolor=blue,      % color of file links
    urlcolor=cyan  
}
\usepackage{latexsym,amssymb,amsmath,amsthm,bbm}
\usepackage{epsfig,mathrsfs}

\setlength{\textwidth}{15cm}
\setlength{\hoffset}{-1.5cm}

\setlength{\parindent}{0em}
\setlength{\parskip}{0.8em}

\def\RR{{\mathbb R}}
\newcommand{\R}{{\ensuremath{\mathbb{R}}}}
\newcommand{\N}{{\ensuremath{\mathbb{N}}}}

\renewcommand{\P}{\ensuremath{\mathbb{P}}}
\renewcommand{\dj}{d\kern-0.4em\char"16\kern-0.1em}
\newcommand{\E}{\ensuremath{\mathbb{E}}}

\newtheorem{thm}{Theorem}[section]
\newtheorem{lemma}[thm]{Lemma}
\newtheorem{defn}[thm]{Definition}
\newtheorem{prop}[thm]{Proposition}
\newtheorem{corollary}[thm]{Corollary}
\newtheorem{remark}[thm]{Remark}

\numberwithin{equation}{section}

\newcommand{\cal}[1]{\mathcal{#1}}

\def\sA {{\cal A}}

  \def\sL {{\cal L}}

  \def\bR {{\mathbb R}}

\def\eps{\varepsilon}
\def\BB{{\cal B}}
\def\wh{\widehat}
\def\wt{\widetilde}
\def\pf{\noindent{\bf Proof.} }
\def\beq{\begin{equation}}
\def\eeq{\end{equation}}
\def\bee{\begin{equation}}
\def\eee{\end{equation}}
\def\nn{\nonumber}

\begin{document}
\numberwithin{equation}{section}
\bibliographystyle{amsalpha}

\title[Green function estimates for SBM]{Green function estimates for subordinate Brownian motions : stable and beyond}
\begin{abstract}
A subordinate Brownian motion $X$ is a L\'evy process which can be
obtained by replacing the time of the Brownian motion by an
 independent subordinator.
In this paper, 
when the Laplace exponent $\phi$
of the corresponding subordinator satisfies some mild conditions, we first prove the scale
invariant boundary Harnack inequality for $X$ on arbitrary open sets. Then we give an explicit form of sharp two-sided estimates 
of  the
Green functions of these subordinate Brownian motions in any bounded
 $C^{1,1}$ open set.
As a consequence, we prove the boundary Harnack inequality for $X$ on any
 $C^{1,1}$ open set  with explicit decay rate.
Unlike \cite{KSV2, KSV4}, our results cover geometric stable processes and relativistic geometric stable process, i.e. the cases when the subordinator has the Laplace exponent
\[
\phi(\lambda)=\log(1+\lambda^{\alpha/2})\ \ \ \ 
(0<\alpha\leq 2, d > \alpha)\]
and
\[
\phi(\lambda)=\log(1+(\lambda+m^{\alpha/2})^{2/\alpha}-m)\ \ \ \ (0<\alpha<2,\,
m>0, 
d >2)\,.
\]
\end{abstract}

\author{Panki Kim}
\address{Department of Mathematical Sciences and Research Institute of Mathematics,
Seoul National University,
Building 27, 1 Gwanak-ro, Gwanak-gu
Seoul 151-747, Republic of Korea}
\curraddr{}
\email{pkim@snu.ac.kr}
\thanks{Research supported by Basic Science Research Program through
the National Research Foundation of Korea(NRF) funded by the Ministry of Education,
Science and Technology(0409-20120034).}

\author{Ante Mimica}
\address{Fakult\"{a}t f\"{u}r Mathematik, Universit\"{a}t Bielefeld, Postfach
100131, D-33501 Bielefeld, Germany}
\curraddr{}
\thanks{ Research supported in part by German Science Foundation DFG via IGK
"Stochastics and real world models" and SFB 701.}
\email{amimica@math.uni-bielefeld.de}
\thanks{}

\subjclass[2000]{Primary 60J45, Secondary 60J75, 60G51}

\keywords{geometric stable process, Green function, Harnack inequality, Poisson
kernel, harmonic function, potential, subordinator, subordinate
Brownian motion}

\maketitle

\section{Introduction}
Let $d$ be a positive integer, let $W=(W_t,\P_x)$ be  a Brownian motion in
$\R^d$ starting at $x$ 
  and let  $S=(S_t\colon t\geq 0)$ 
be a subordinator independent of $W$,
i.e. a L\' evy process taking values in $[0,\infty)$ and starting at $0$. 

 The Laplace exponent of a subordinator is a Bernstein
function and hence has the representation
\begin{equation}\label{eq:sub}
\phi(\lambda)=b\lambda +\int\limits_{(0,\infty)}(1-e^{-\lambda t})\, \mu(dt)\, ,
\end{equation}
where $b\ge 0$ and $\mu$ is a measure on $(0,\infty)$
satisfying
$\int\limits_{(0,\infty)}(1\wedge t)\, \mu(dt)<\infty$, usually called the L\'
evy measure of $\phi$.
If the measure $\mu$ has a completely monotone density, the Laplace
exponent $\phi$ is called a complete Bernstein function.

We define the subordinate Brownian motion $X=(X_t,\P_x)$ 
by $X_t=W_{S_t}$. 

The aim of this paper is to obtain 
the following two-sided
 estimates of the Green function 
  $G_D(x,y)$ of $X$ in 
a bounded $C^{1,1}$ open set $D\subset \R^d$ in terms of the Laplace exponent
$\phi$ of the subordinator:
\[
 G_D(x,y)\asymp \left(1 \wedge
\frac{\phi(|x-y|^{-2})}{\sqrt{\phi(\delta_D(x)^{-2})
\phi(\delta_D(y)^{-2})}}\right)\,
\frac{\phi'(|x-y|^{-2})}{|x-y|^{d+2}\phi(|x-y|^{-2})^2}\,,
\]
where $\delta_D(x)$ denotes the distance of the point $x$ to 
 $D^c$ and $a\wedge b:=\min \{a, b\}$. 
Here and in the sequel,  $f\asymp g$  means that the quotient 
$\frac{f}{g}$
stays bounded
between two positive numbers on their common domain of definition.

The process 
 $X$ is, in particular,   a rotationally symmetric L\' evy process. Recently
there has been huge interest in studying
potential theory 
of such processes.  See,
for example, \cite{KMR, KSV3, KSV2, KSV4, RSV} and 
references therein. The
purpose of this paper is to extend recent results in \cite{KSV2, KSV4} by
covering  geometric stable processes and much more. 

Estimates of Green function for discontinuous Markov processes 
were first
studied for rotationally symmetric $\alpha$-stable processes in \cite{CS4} and
in \cite{Kul} independently. These results were extended later to relativistic
$\alpha$-stable processes and to sums of two independent stable processes in
\cite{Ryz} and \cite{CKS2} respectively. 
 Recently, the first named author with R. Song and Z.
Vondra\v cek succeeded to obtain such estimates for a large class of subordinate
Brownian motions in \cite{KSV2}. 

Still, 
the class considered in \cite{KSV2}
does not include some
interesting cases like geometric stable processes or, more generally, the class
of subordinate Brownian motions with Laplace exponent that varies slowly at
infinity. Our approach covers a large class of such processes. 

Another feature of our approach is that it is unifying in the following sense: 
the sharp estimates of the Green function are given only in terms of the Laplace
exponent $\phi$ and its derivative. 

Let us give a few examples of transient processes that are covered by our
approach. 

{\bf Example 1} (Geometric stable processes)
\[\phi(\lambda)=\log(1+\lambda^{\beta/2}),\ \  \ (0<\beta\leq 2,\, d>\beta).\]

{\bf Example 2} (Iterated geometric stable processes)
\begin{align*}
 \phi_1(\lambda)&=\log(1+\lambda^{\beta/2})  \ \ \ (0<\beta\leq 2)\\
\phi_{n+1}&=\phi_1\circ \phi_n \ \ \  n\in \N,
\end{align*}
with an additional condition $d>2^{1-n}\beta^n$.

{\bf Example 3} (Relativistic geometric stable processes)
\[\phi(\lambda)=\log\left(1+\left(\lambda+m^{\beta/2}\right)^{2/\beta}-m
\right)  \ \ \ (m>0,\,0<\beta<2,\, d>2).\]

In order to obtain the sharp Green function estimates we first obtain the
uniform boundary Harnack principle, with constant not depending on the open set
itself. Such uniform boundary Harnack principle was first proved in \cite{BKK}
and very recently generalized to a larger class of rotationally symmetric L\'
evy processes in \cite{KSV4}. We adapt the approach in the latter paper in
order to cover the class of subordinate Brownian motions with slowly varying
Laplace exponents. Unlike the approach in \cite{KSV4}, instead of the use of the
Harnack inequality, we use estimates of the Green function of balls near
 boundary
obtained in \cite{KM}.

Further, our uniform 
boundary Harnack principle can be used to prove
sharp
Green function estimates for bounded $C^{1,1}$ open sets by adapting the method
in 
\cite{KSV2}. Even though we follow the roadmap in 
\cite{KSV2}, 
we needed to
make  significant changes due to the fact that now we do not have
necessarily regularly varying Laplace exponents. 

To overcome such difficulties we use new types of estimates (not only in
terms of the Laplace exponent itself, but also in terms of its derivative)  of
the jumping kernel and the potential kernel of the subordinate Brownian motions,
which were obtained for the first time in \cite{KM}.
 This type of estimates is essential in our
approach. 

Let us be more precise now. 
In this paper 
we will always assume the following three conditions on the Laplace exponent
$\phi$ of the subordinator $S$:

\noindent {\bf (A-1)}
	$\phi$ is a complete Bernstein function;
	
\noindent	{\bf(A-2)} the L\' evy density $\mu$ of $\phi$ is infinite, i.e.
$\mu(0,\infty)=\infty$ ;

\noindent
{\bf(A-3)}
	there exist constants $\sigma>0$, $\lambda_0>0$ and
$\delta \in (0, 1]$ such that
\begin{equation}\label{eq:as-1}
  \frac{\phi'(\lambda x)}{\phi'(\lambda)}\leq
\sigma\,x^{-\delta}\ \text{ for all
}\ x\geq 1\ \text{ and }\ \lambda\geq\lambda_0\,.
\end{equation}

\bigskip

Either in the case $d \le 2$ and $ \delta>1-d/2$ or in the case $d \ge 2$ and  $0<\delta \le \frac{1}{2}$	
we will sometimes 
further
assume 
two 
technical conditions below. Note that {\bf (A-4)}, related to transience of the corresponding subordinate Brownian motion,             is used in \cite{KM} to obtain the asymptotic of the jumping kernel and the Green function of   the  subordinate Brownian motion. Unlike \cite{KM} we state {\bf (A-4)} for $d=2$ and $d=1$ separately to make it clear.

  \noindent {\bf (A-4)}
If $d = 2$, we assume that there are $
\sigma_0>0$ and  $\delta_0 \in (0,2)$ such that 
\begin{equation}\label{e:new23}
  \frac{\phi'(\lambda x)}{\phi'(\lambda)}\geq
\sigma_0\,x^{-
\delta_0}\ \text{ for all
}\ x\geq 1\ \text{ and }\ \lambda\geq\lambda_0.
\end{equation}
If $d=1$, we assume that the constant $\delta$ in
{\bf (A-3)} satisfies $\delta>\tfrac{1}{2}$ and that 
there are $
\sigma_0>0$ and  $\delta_0  \in  (\tfrac{1}{2}, 2\delta-\tfrac{1}{2})$
 such that \eqref{e:new23} holds.

\noindent {\bf(A-5)}
  If $d \ge 2$ and the constant  $\delta$ in {\bf(A-3)} satisfies  $0<\delta \le
\frac{1}{2}$, then we assume that  there exist constants $\sigma_1>0$ and
$\delta_1 \in [\delta, 1 )$ 
such that
\begin{equation}\label{eq:as-2}
  \frac{\phi(\lambda x)}{\phi(\lambda)}\geq
\sigma_1\,x^{1-\delta_1}\ \text{ for all
}\ x\geq 1\ \text{ and }\ \lambda\geq\lambda_0\,.
\end{equation}

\bigskip
\begin{remark}
\begin{itemize}
\item[(a)]
Note that {\bf (A-3)} implies $b=0$ in \eqref{eq:sub}, by letting $\lambda\to \infty$.
\item[(b)]
	The condition {\bf (A-3)} is implied by the following stronger condition
  \begin{equation}\label{eq:stri-cond}
    \forall\, x>0\ \ \ \ \lim_{\lambda\to\infty}\frac{\phi'(\lambda
x)}{\phi'(\lambda)}=x^{\frac{\alpha}{2}-1}\ \ \ \ (0\leq
\alpha<2)\,.
  \end{equation}
In other words, (\ref{eq:stri-cond}) says that
$\phi'$ varies regularly at infinity with index $\frac{\alpha}{2}-1$. 
A novelty here is the case $\alpha=0$. 
\item[(c)] The condition {\bf (A-4)} is used only to obtain Green function
estimates. 
\end{itemize}
\end{remark}

Now we state the main result of this paper. By $\mbox{diam}(D)$ we denote the
diameter of
$D$.

\begin{thm}\label{t:Gest2}
Suppose that 
$X=(X_t, \P_x:\, t\ge 0, x \in \R^d)$
 is a transient subordinate Brownian motion 
whose
characteristic exponent is given by $\Phi(\theta)=\phi(|\theta|^2)$,
$\theta\in \R^d$, with $\phi$ satisfying {\bf(A-1)}--{\bf(A-5)}.

Then for every bounded $C^{1,1}$ open set $D$ (see Definition \ref{def:c11})
in $\R^d$ with characteristics $(R, \Lambda)$,  there exists
$c=c(\text{diam}(D), R, \Lambda,\phi, d)> 1$ such that
the Green function $G_D(x,y)$ of $X$ in $D$ satisfies 
\begin{align}\label{e:Gest21-alt1}
 c^{-1}g_D(x,y)\leq G_D(x,y)\leq cg_D(x,y)
\end{align}
with 
\begin{align}\label{e:smallgest}
g_D(x,y)=\left(1 \wedge
\frac{\phi(|x-y|^{-2})}{\sqrt{\phi(\delta_D(x)^{-2})
\phi(\delta_D(y)^{-2})}}\right)\,
\frac{\phi'(|x-y|^{-2})}{|x-y|^{d+2}\phi(|x-y|^{-2})^2}\,.
\end{align}
\end{thm}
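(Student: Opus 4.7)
The strategy is to follow the roadmap of \cite{KSV2}, combining the scale-invariant uniform boundary Harnack principle (BHP) established earlier in the paper with the sharp free-Green-function and ball-Green-function estimates from \cite{KM}. The reference quantity $g_D(x,y)$ in \eqref{e:smallgest} factors as the interior estimate $\phi'(|x-y|^{-2})/(|x-y|^{d+2}\phi(|x-y|^{-2})^2)$ times the boundary-decay factor $1\wedge \phi(|x-y|^{-2})/\sqrt{\phi(\delta_D(x)^{-2})\phi(\delta_D(y)^{-2})}$, and the proof proceeds by matching each factor separately. Two regimes naturally split the argument: an \emph{interior regime} $|x-y|\le\tfrac14(\delta_D(x)\wedge\delta_D(y))$, in which the $1\wedge$ factor is pinched between two positive constants (since $\phi$ is increasing), so it suffices to show $G_D(x,y)\asymp G(x,y)$; and the complementary \emph{boundary regime} in which the boundary-decay factor genuinely acts.

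In the interior regime the upper bound $G_D\le G$ is immediate from domain monotonicity combined with the free-process estimate of \cite{KM}, while the lower bound follows by inscribing a ball $B\subset D$ of radius $\asymp \delta_D(x)\wedge\delta_D(y)$ containing both $x$ and $y$ well inside and invoking the sharp ball Green-function estimate from \cite{KM}. For the boundary regime, I would use the $C^{1,1}$ characteristics $(R,\Lambda)$ to select reference points $x_*,y_*\in D$ on the inward normals at the closest boundary points $Q_x,Q_y\in\partial D$, with $\delta_D(x_*)\asymp\delta_D(y_*)\asymp|x-y|\wedge R$ and $|x_*-y_*|\asymp|x-y|$. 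A chain-of-balls argument iterating the scale-invariant BHP applied to $z\mapsto G_D(x_*,z)$ between $y$ and $y_*$ at doubling scales from $\delta_D(y)$ up to $|x-y|$ produces, by telescoping, the factor $\sqrt{\phi(|x-y|^{-2})/\phi(\delta_D(y)^{-2})}$; a symmetric chain in the $x$-variable against $G_D(\,\cdot\,,y_*)$ contributes the other square root. Since $G_D(x_*,y_*)$ is controlled by the interior step, multiplying the three pieces yields the upper bound, and reversing the chain of BHP inequalities together with the interior Harnack inequality inside each intermediate ball produces the matching lower bound.

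The main obstacle is that, unlike in \cite{KSV2}, the Laplace exponent $\phi$ need not vary regularly, so scaling identities are unavailable and the ratios $\phi(r_k^{-2})/\phi(r_{k-1}^{-2})$ across consecutive scales in each chain must be controlled \emph{solely} through the one-sided bounds \textbf{(A-3)}, \textbf{(A-4)} and \textbf{(A-5)}. The number of chain steps is only logarithmic in $\text{diam}(D)/R$, but the accumulated multiplicative constants must still be shown to depend on no more than $\text{diam}(D)$, $R$, $\Lambda$, $\phi$ and $d$. Here the refined estimates on the jumping kernel and the potential kernel (involving $\phi'$ as well as $\phi$) from \cite{KM} are indispensable, as they provide the sharp scale-by-scale control of the free-process quantities that is needed for the telescoping products to close with constants of the advertised form.
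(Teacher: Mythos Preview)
Your interior regime is fine and matches the paper's Lemmas~\ref{GI} and~\ref{G:g3}. The gap is in the boundary regime: iterating the scale-invariant BHP on a \emph{single} harmonic function $z\mapsto G_D(x_*,z)$ does not produce the explicit factor $\sqrt{\phi(|x-y|^{-2})/\phi(\delta_D(y)^{-2})}$. The BHP is a \emph{ratio} statement comparing two harmonic functions; applied repeatedly to one function it only yields $G_D(x_*,y)\le c^N G_D(x_*,y_*)$, with $N$ the number of doubling steps from $\delta_D(y)$ to $|x-y|$. Your claim that $N$ is bounded by a constant depending on $\mathrm{diam}(D)/R$ is wrong: $N\asymp\log(|x-y|/\delta_D(y))$ is unbounded as $\delta_D(y)\to 0$, so the constant $c^N$ blows up. Even in the classical Carleson-type iteration one obtains only an \emph{unspecified} H\"older exponent, not the sharp boundary rate $\phi(\delta_D(y)^{-2})^{-1/2}$.

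To extract the sharp rate you need a reference harmonic function with \emph{known} boundary decay to play the role of the second function in the BHP. This is what the paper does and what your sketch omits: in Section~\ref{sec:hs} it builds the test function $h(y)=V(\delta_D(y))$ from the renewal function $V$ of the ladder height process, proves the hard generator bound $|\sA h|\le C_1$ on $D\cap B(Q,R/4)$ (Lemma~\ref{L:Main}, which uses {\bf(A-5)} essentially), and deduces via Dynkin's formula that exit probabilities and exit times near $\partial D$ are comparable to $V(\delta_D(x))\asymp \phi(\delta_D(x)^{-2})^{-1/2}$ (Lemma~\ref{L:2}). Only after this is established does a \emph{single} application of the BHP (not a chain), comparing $G_D(\cdot,y)$ with $g_D(\cdot)=G_D(\cdot,z_0)\wedge C_4$, yield the correct boundary factor (Theorems~\ref{t:Gest} and~\ref{t:Ge}, following the Bogdan--Hansen scheme). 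The generator analysis of $V(\delta_D(\cdot))$ is precisely the ``missing idea'' that replaces your telescoping.
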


Before we discuss a  corollary of Theorem \ref{t:Gest2}, we record a simple fact. 
\begin{lemma}\label{l:psilow}
If $\delta_* \in (0,1)$ and $\psi$ is a Bernstein function satisfying  
\begin{equation}\label{eq:as-22}
  \frac{\psi(\lambda x)}{\psi(\lambda)}\geq
\sigma_*\,x^{1-\delta_*}\ \text{ for all
}\ x\geq 1\ \text{ and }\ \lambda\geq
\lambda_*\, ,
\end{equation}
for some $\sigma_*, \lambda_*>0$,  
then there exists a constant $c>0$ 
such that $\psi(\lambda) \le c \lambda
\psi'(\lambda)$ for all $\lambda\geq
\lambda_*$.
\end{lemma}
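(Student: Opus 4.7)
The plan is to compare the lower-bound growth condition \eqref{eq:as-22} against an upper bound on the increment $\psi(\lambda x)-\psi(\lambda)$ obtained from the monotonicity of $\psi'$. Because $\psi$ is a Bernstein function, $\psi'$ is completely monotone, so $\psi'$ is nonincreasing on $(0,\infty)$.

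First, for fixed $\lambda\ge \lambda_*$ and $x\ge 1$, I would write
\begin{equation*}
\psi(\lambda x)-\psi(\lambda)=\int_{\lambda}^{\lambda x}\psi'(t)\,dt
=\lambda\int_{1}^{x}\psi'(\lambda s)\,ds\le \lambda\psi'(\lambda)(x-1),
\end{equation*}
using $\psi'(\lambda s)\le\psi'(\lambda)$ for $s\ge 1$. Combined with the hypothesis $\psi(\lambda x)\ge \sigma_* x^{1-\delta_*}\psi(\lambda)$, this yields
\begin{equation*}
\bigl(\sigma_* x^{1-\delta_*}-1\bigr)\psi(\lambda)\le \lambda\psi'(\lambda)(x-1).
\end{equation*}

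Since $1-\delta_*>0$, the coefficient $\sigma_* x^{1-\delta_*}-1$ tends to $+\infty$ as $x\to\infty$, so I may fix once and for all some $x_0>1$ with $\sigma_* x_0^{1-\delta_*}>1$ (e.g.\ $x_0=(2/\sigma_*)^{1/(1-\delta_*)}\vee 2$). Then setting
\begin{equation*}
c=\frac{x_0-1}{\sigma_* x_0^{1-\delta_*}-1}
\end{equation*}
gives $\psi(\lambda)\le c\,\lambda\,\psi'(\lambda)$ uniformly for all $\lambda\ge\lambda_*$.

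There is no real obstacle here: the only small point to watch is that $\sigma_*$ is not assumed to be $\ge 1$, so one must ensure by taking $x_0$ large enough that $\sigma_* x_0^{1-\delta_*}>1$; this is precisely where the strict inequality $\delta_*<1$ is used. The complete monotonicity of $\psi'$ (a consequence of the Bernstein property) is the other essential ingredient, ensuring $\psi'$ is decreasing so that the crude upper bound on the increment is valid.
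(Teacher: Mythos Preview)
Your proof is correct and essentially identical to the paper's own argument: both bound $\psi(\lambda x)-\psi(\lambda)=\int_\lambda^{\lambda x}\psi'(t)\,dt$ above by $(x-1)\lambda\psi'(\lambda)$ via monotonicity of $\psi'$, then use the hypothesis to bound it below by $(\sigma_* x^{1-\delta_*}-1)\psi(\lambda)$, and finally fix $x_0=2\vee(2/\sigma_*)^{1/(1-\delta_*)}$ so that the lower coefficient is at least $1$. The only cosmetic difference is that the paper selects this $x_0$ (called $a_1$) at the outset rather than after deriving the inequality for general $x$.
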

\pf
Let $a_1 = 2 \vee (\frac{2}{
\sigma_*})^{\frac{1}{1-\delta_*}}.$
Since $\psi'$ is decreasing, 
\begin{equation}\label{eq:as-23}
(a_1-1)\lambda \psi'(\lambda) \ge \int\limits_\lambda^{a_1\lambda} \psi'(t)dt =
\psi(a_1\lambda)-\psi(\lambda).\end{equation}
Let $\lambda \ge \lambda_*$. Since $\psi(a_1\lambda) \ge  \sigma_*\,a_1^{1-\delta_*}  \psi(\lambda)$ by
\eqref{eq:as-22}, 
we get from \eqref{eq:as-23}
$$
(a_1-1)\lambda \psi'(\lambda) \ge (
\sigma_* a_1^{1-\delta_*}-1)  \psi(\lambda)  \ge  \psi(\lambda).
$$
\qed

Now we consider the following  upper and lower scaling conditions on the Laplace exponent $\phi$
 with exponents in the range $(0,1)$:
 there exist constants $c_1, c_2, \lambda_1>0$, $\alpha, \beta \in (0,2)$
and
$\alpha \le \beta$ such that  
\begin{equation}\label{e:form-of-phi}
c_1\,x^{\alpha/2} \le   \frac{\phi(\lambda x)}{\phi(\lambda)}\leq
c_2\,x^{\beta/2}\ \text{ for all
}\ x\geq 1\ \text{ and }\ \lambda\geq\lambda_1\,.
\end{equation}
Define 
\begin{equation}\label{e:whg}
\wh g_D(x,y)=
\left(1 \wedge
\frac{\phi(|x-y|^{-2})}{\sqrt{\phi(\delta_D(x)^{-2})
\phi(\delta_D(y)^{-2})}}\right)\, \frac{1}{|x-y|^d\,
\phi(|x-y|^{-2})}.
\end{equation}

If 
$\phi$  is a complete Bernstein
function such that \eqref{e:form-of-phi} holds, then 
$$\liminf_{x \to  \infty} \phi (x)\ge c_1  \lambda_1^{-\alpha/2}\phi (\lambda_1)\liminf_{x \to  \infty} x^{\alpha/2}=\infty.$$
Thus {\bf(A-1)}--{\bf(A-2)} hold. Moreover, 
applying  Lemma \ref{l:psilow} and \eqref{e:Berall1} below, 
\eqref{e:form-of-phi} implies that 
$\lambda \phi'(\lambda) \le \phi(\lambda) \le c \lambda \phi'(\lambda)$ for all $\lambda\geq\lambda_0$ and so 
{\bf(A-3)} and {\bf(A-5)} hold and 
\eqref{e:Gest21-alt1} is equivalent to \eqref{e:Gest21-alt12}.
Therefore Theorem \ref{t:Gest2} 
 gives the
following extension of the main result in \cite{KSV2}.
\begin{corollary}\label{c:Gest}
Suppose that $X=(X_t:\, t\ge 0)$ is a transient subordinate Brownian motion  whose
characteristic exponent is given by $\Phi(\theta)=\phi(|\theta|^2)$,
$\theta\in \R^d$, 
where $\phi:(0,\infty)\to [0,\infty)$  is a complete Bernstein
function such that \eqref{e:form-of-phi} holds. 
We further assume that {\bf(A-4)} hold with 
$\delta_0=1-\beta/2$ 
when $d=1$.

Then for every bounded $C^{1,1}$ open set $D$
in $\R^d$ with characteristics $(R, \Lambda)$,  there exists
$c=c(\text{diam}(D), R, \Lambda,\phi, d)> 1$ such that
the Green function $G_D(x,y)$ of $X$ in $D$ satisfies the following
estimates:
\begin{equation}\label{e:Gest21-alt12}
  c^{-1}\wh g_D(x,y)\leq G_D(x,y)\leq c \wh g_D(x,y)
\end{equation}
where $\wh g_D(x,y)$ is defined in \eqref{e:whg}.
\end{corollary}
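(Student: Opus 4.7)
The plan is to deduce the corollary directly from Theorem \ref{t:Gest2} by verifying that the scaling condition \eqref{e:form-of-phi} (together with the explicit hypothesis on $d=1$) implies all of \textbf{(A-1)}–\textbf{(A-5)}, and then showing the pointwise equivalence $g_D \asymp \wh g_D$.

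First I would check \textbf{(A-1)}–\textbf{(A-2)}. The former is assumed. For \textbf{(A-2)}, the upper scaling in \eqref{e:form-of-phi} gives $\phi(\lambda)/\lambda \le c_2\phi(\lambda_1)\lambda_1^{-\beta/2}\lambda^{\beta/2-1} \to 0$ as $\lambda\to\infty$, so the drift $b$ in \eqref{eq:sub} vanishes; the lower scaling gives $\phi(\lambda)\to\infty$, hence $\mu((0,\infty))=\infty$. Next, Lemma \ref{l:psilow} applied with $\delta_*=1-\alpha/2$ and $\sigma_*=c_1$ yields $\phi(\lambda)\le c\lambda\phi'(\lambda)$ for $\lambda\ge\lambda_1$; the matching inequality $\lambda\phi'(\lambda)\le \phi(\lambda)$ is a general property of Bernstein functions (the reference \eqref{e:Berall1} alluded to in the excerpt). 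Thus
\[
\phi'(\lambda)\asymp \phi(\lambda)/\lambda\qquad (\lambda\ge\lambda_1),
\]
and combining with \eqref{e:form-of-phi} gives
\[
\frac{\phi'(\lambda x)}{\phi'(\lambda)}\asymp \frac{1}{x}\cdot\frac{\phi(\lambda x)}{\phi(\lambda)}
\]
which is bounded above by $c x^{\beta/2-1}$ and below by $c' x^{\alpha/2-1}$ for $x\ge 1$, $\lambda\ge\lambda_1$. Taking $\delta=1-\beta/2$ gives \textbf{(A-3)}; the lower scaling in \eqref{e:form-of-phi} with $\delta_1=1-\alpha/2\ge\delta$ (since $\alpha\le\beta$) gives \textbf{(A-5)} in the borderline case $\delta\le 1/2$.

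For \textbf{(A-4)}: when $d\ge 3$ nothing is needed; when $d=2$, the lower bound $\phi'(\lambda x)/\phi'(\lambda)\ge c'x^{\alpha/2-1}$ just proved gives \eqref{e:new23} with $\delta_0=1-\alpha/2\in(0,1)\subset(0,2)$; when $d=1$, the hypothesis of the corollary supplies \textbf{(A-4)} with $\delta_0=1-\beta/2$ (the only thing to check is that this $\delta_0$ lies in the allowed range, which is part of the corollary's assumptions). Thus all the hypotheses of Theorem \ref{t:Gest2} are satisfied, and applying that theorem gives
\[
c^{-1} g_D(x,y)\le G_D(x,y)\le c\, g_D(x,y).
\]

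Finally, to convert this to the stated estimate I would rewrite the non-boundary factor. Using $\phi'(\lambda)\asymp \phi(\lambda)/\lambda$ at $\lambda=|x-y|^{-2}$ (valid for the relevant range because $D$ is bounded; for $|x-y|$ bounded away from $0$ the quantity $\phi'(|x-y|^{-2})|x-y|^{2}/\phi(|x-y|^{-2})$ is bounded above and below by positive constants depending on $\mathrm{diam}(D)$ and $\phi$),
\[
\frac{\phi'(|x-y|^{-2})}{|x-y|^{d+2}\phi(|x-y|^{-2})^2}
\asymp \frac{1}{|x-y|^{d}\,\phi(|x-y|^{-2})},
\]
so $g_D(x,y)\asymp \wh g_D(x,y)$, which gives \eqref{e:Gest21-alt12}. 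The main (minor) obstacle is handling the range $|x-y|^{-2}<\lambda_1$ in this last step, which is controlled by the boundedness of $D$ and the positivity and continuity of $\phi,\phi'$ on compacta.
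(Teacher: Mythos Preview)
Your proof is correct and follows essentially the same route as the paper's argument (which appears in the paragraph preceding the corollary): verify \textbf{(A-1)}--\textbf{(A-5)} from the scaling hypothesis \eqref{e:form-of-phi} via Lemma~\ref{l:psilow} and \eqref{e:Berall1} to get $\lambda\phi'(\lambda)\asymp\phi(\lambda)$, then apply Theorem~\ref{t:Gest2} and use that same comparability to pass from $g_D$ to $\wh g_D$. You are in fact slightly more thorough than the paper's sketch: you explicitly verify \textbf{(A-4)} for $d=2$ (from the lower bound $\phi'(\lambda x)/\phi'(\lambda)\ge c'x^{\alpha/2-1}$), and you flag the need to handle $|x-y|^{-2}<\lambda_1$ by continuity and boundedness of $D$, both of which the paper leaves implicit.
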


In \cite{KSV2}, the above 
result 
 is proved when, instead of
\eqref{e:form-of-phi}, 
$\phi$ satisfies
\begin{align}
\label{e:ell}
 \phi(\lambda)\asymp \lambda^{\alpha/2}\ell(\lambda),\ \lambda \to\infty\ \ \
(0<\alpha<2)\, 
\end{align}
where $\ell$ varies slowly at infinity, i.e.
\[
 \forall\, x>0\ \ \ \  \lim_{\lambda \to\infty}\frac{\ell(\lambda
x)}{\ell(\lambda)}=1\,.
\]
By Potter's theorem (see \cite[Theorem 1.5.6(i)]{BGT}), \eqref{e:ell} clearly 
implies \eqref{e:form-of-phi}.

Using Green function estimates in Theorem \ref{t:Gest2} for $d \ge 3$ and a dimension reduction argument (see the proof of Theorem \ref{UBHP}), we 
 prove the boundary Harnack principle for
subordinate Brownian motions satisfying  {\bf (A-1)}, {\bf (A-2)}, {\bf (A-3)}
and {\bf(A-5)} in $C^{1,1}$ open set. We emphasize that in the next theorem we
do not assume neither the transience nor {\bf (A-4)}. 

\begin{thm}\label{t:bhp}
Suppose that 
$X=(X_t, \P_x:\, t\ge 0, x \in \R^d)$
 is a (not necessarily transient) subordinate
Brownian motion satisfying {\bf (A-1)}, {\bf (A-2)}, {\bf (A-3)} and {\bf
(A-5)} and that $D$ is a
(possibly unbounded) $C^{1, 1}$ open set in $\bR^d$ with
characteristics $(R, \Lambda)$. Then there exists
$c=c(R, \Lambda, \phi)>0$  such that for every $r \in (0, \frac{R \wedge
1}{4}]$, $z\in \partial D$ and any nonnegative function $u$ in $\R^d$
that is harmonic in $D \cap B(z, r)$ with respect to $X$ and
vanishes continuously on $ D^c \cap B(z, r)$, we have
\begin{equation}\label{e:bhp_m}
\frac{u(x)}{u(y)}\,\le c\,
\sqrt{\frac{\phi(\delta_D(y)
^{-2})}{\phi(\delta_D(x)
^{-2})}}
\qquad \hbox{for every } x, y\in  D \cap B(z, \tfrac{r}{2}).
\end{equation}
\end{thm}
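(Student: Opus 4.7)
The plan is to combine the uniform boundary Harnack principle on arbitrary open sets (Theorem \ref{UBHP}, established earlier in the paper) with the Green function estimates of Theorem \ref{t:Gest2}, using a dimension-reduction trick to handle $d\le 2$.

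\textbf{Step 1 ($d\ge 3$).} In this regime {\bf(A-4)} is vacuous and $X$ is automatically transient (since any non-trivial Bernstein function with $\phi(0)=0$ satisfies $\phi'(0+)\in(0,\infty]$, hence $\phi(\lambda)\ge c\lambda$ near $0$, which makes $\int_0^1 r^{d-1}/\phi(r^2)\,dr$ convergent for $d\ge 3$), so Theorem \ref{t:Gest2} is applicable. Fix $z\in\partial D$ and $r\in(0,(R\wedge 1)/4]$. Construct a bounded $C^{1,1}$ open set $D_0\subset D$ whose $C^{1,1}$ characteristics and diameter are comparable to $(R,\Lambda)$ and $R$, with $\partial D_0\cap B(z,R/2)=\partial D\cap B(z,R/2)$; in particular $\delta_{D_0}(x)=\delta_D(x)$ for every $x\in D\cap B(z,r)$ (using $r\le R/4$). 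Such $D_0$ is produced by extending the local $C^{1,1}$ graph representing $\partial D$ near $z$ to a closed $C^{1,1}$ hypersurface. Pick a reference point $x_0\in D_0$ with $|x_0-z|\asymp R$ and $\delta_{D_0}(x_0)\asymp R$, and set $v(x):=G_{D_0}(x,x_0)$. Then $v$ is nonnegative, harmonic for $X$ on $D\cap B(z,r)$ and vanishes on $D^c\cap B(z,r)$, so Theorem \ref{UBHP} gives $u(x)/u(y)\le c_1\,v(x)/v(y)$ for $x,y\in D\cap B(z,r/2)$. Plugging $v$ into Theorem \ref{t:Gest2} and using $|x-x_0|\asymp R$ and $\delta_{D_0}(x_0)\asymp R$, every factor in $g_{D_0}(x,x_0)$ other than the $1\wedge\cdots$ term is comparable to a constant depending only on $R,\phi$, so
\[
v(x)\asymp 1\wedge\frac{C}{\sqrt{\phi(\delta_D(x)^{-2})}}\asymp\frac{1}{\sqrt{\phi(\delta_D(x)^{-2})}},
\]
and substitution into the previous display yields \eqref{e:bhp_m}.

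\textbf{Step 2 (dimension reduction for $d=1,2$).} Take $k$ with $d+k\ge 3$, let $B^{(k)}$ be a $k$-dimensional Brownian motion independent of $W$, and set $\tilde W_t:=(W_t,B^{(k)}_t)$, $\tilde X_t:=\tilde W_{S_t}$. Then $\tilde X$ is a subordinate Brownian motion in $\R^{d+k}$ with the same Laplace exponent $\phi$, inheriting {\bf(A-1)}--{\bf(A-3)} and {\bf(A-5)}; {\bf(A-4)} is vacuous because $d+k\ge 3$. Put $\tilde D:=D\times\R^k$, $\tilde z:=(z,0)$ and $\tilde u(x,y):=u(x)$. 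Then $\tilde D$ is a $C^{1,1}$ open set in $\R^{d+k}$ with the same characteristics $(R,\Lambda)$ and $\delta_{\tilde D}((x,y))=\delta_D(x)$. Since $\tilde X$ exits any product cylinder $A\times\R^k$ exactly when $X$ exits $A$, and $\tilde u(\tilde X_{\tau_{A\times\R^k}})=u(X_{\tau_A})$, harmonicity of $u$ in $D\cap B^d(z,r)$ transfers to harmonicity of $\tilde u$ on the cylinder $(D\cap B^d(z,r))\times\R^k$, and by locality on the smaller open set $\tilde D\cap B^{d+k}(\tilde z,r)$; clearly $\tilde u$ vanishes on $\tilde D^c\cap B^{d+k}(\tilde z,r)$. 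Applying Step~1 to $\tilde X$ in $\R^{d+k}$ at $\tilde x=(x,0)$ and $\tilde y=(y,0)$ for $x,y\in D\cap B^d(z,r/2)$ produces \eqref{e:bhp_m} for $X$.

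\textbf{Main obstacle.} The subtlest point is the harmonicity lifting in Step~2: although $W$ and $B^{(k)}$ are independent Brownian motions, the coordinates of $\tilde X$ share the common subordinator $S$ and are therefore not independent. What rescues us is that $\tilde X$ is itself a subordinate Brownian motion (of $(d+k)$-dimensional Brownian motion with the same $\phi$), which makes exit-time identification clean on product cylinders; the localization from the cylinder to the non-cylindrical region $\tilde D\cap B^{d+k}(\tilde z,r)$ is then immediate. A secondary technical issue is the construction of $D_0$ in Step~1 with the $\delta$-preservation property and with characteristics depending only on $R,\Lambda$, which must be arranged so that the final constant in \eqref{e:bhp_m} depends only on $R,\Lambda,\phi$.
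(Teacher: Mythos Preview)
Your approach is essentially the paper's: for $d\ge 3$ you combine Theorem~\ref{UBHP} with the Green function estimates of Theorem~\ref{t:Gest2} (comparing $u$ against $G_{D_0}(\cdot,x_0)$ on a localized $C^{1,1}$ domain), and for lower dimensions you lift to $d\ge 3$ by adjoining independent Brownian components. The paper does exactly this for $d\ge 3$ (referring to the argument in \cite{KSV2}) and for $d=2$; for $d=1$ the paper instead uses the explicit harmonic function $w(x)=V((x_d)^+)$ of Theorem~\ref{t:Sil} together with Proposition~\ref{e:behofV} as the reference function in Theorem~\ref{UBHP}, which is a bit more direct but equivalent to your reduction.

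One technical point you skip and the paper does address: Theorem~\ref{UBHP} is stated for \emph{regular} harmonic functions vanishing a.e.\ on $D^c\cap B(z,r)$, whereas in Theorem~\ref{t:bhp} the function $u$ is only assumed harmonic and vanishing continuously. Before invoking Theorem~\ref{UBHP} you need the passage ``harmonic in $D\cap B(z,r)$ and vanishing continuously on $D^c\cap B(z,r)$ $\Rightarrow$ regular harmonic in $D\cap B(z,r')$ for $r'<r$''; the paper handles this by noting that the result of \cite[Lemma 4.2]{CKSV} carries over to the present setting (this uses regularity of $\partial D$ for $X$). Without this step your application of Theorem~\ref{UBHP} to $u$ is not justified.
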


We remark that Theorem \ref{t:bhp} 
covers
the processes in Examples 1-3 without
the assumptions on transience. 

Our paper is organized as follows. In Section \ref{sec:2} we record some
preliminary results concerning subordinate Brownian motions obtained in
\cite{KM}. We start Section \ref{sec:hs} by analyzing special harmonic
functions in half-space and use these results to obtain key probabilistic
estimates on $C^{1,1}$ open sets. Section \ref{sec:pk} contains estimates of
Poisson kernel on balls which are used in Section \ref{sec:bhp} to obtain
the uniform boundary Harnack principle on arbitrary open sets. After proving
sharp Green function estimates in Lipschitz domains in Section \ref{s:3}, we
finally obtain in Section \ref{s:4} the boundary Harnack principle and sharp
Green function estimates in $C^{1,1}$ open sets.

{\bf Notation. } Throughout the paper we use the notation $f(r)\asymp
g(r),\ r\to a$ to denote that $\tfrac{f(r)}{g(r)}$ stays between two positive
constants as $r\to a$. 
We say that
$f\colon\R\rightarrow \R$ is increasing if
$s\leq t$ implies $f(s)\leq f(t)$ and analogously for a decreasing function. 
For $a, b\in \bR$, we set
$a\wedge b:=\min \{a, b\}$, $a\vee b:=\max\{a, b\}$.
For a Borel
set $A\subset \bR^d$, we also use $|A|$ to denote its Lebesgue
measure.
We will use ``$:=$" to denote a definition,
which is read as ``is defined to be".

We will use the following conventions in this paper. The values of
the constants $C_{1},C_2, C_3$, $C_4$ and $\eps_1$  will remain the same
throughout this paper, while 
%$c_1, c_2, \ldots$
$c, c_1, c_2, \ldots$
  stand for constants
whose values are unimportant and which may change from one
appearance to another. All constants are positive finite numbers.
The labeling of the constants $
%c, 
c_1, c_2, \ldots$ starts anew in the proof of
each result. 
The dependence of 
the constant $c$ 
the constants $c, c_1, c_2, \ldots$
on the dimension $d$ will
not be mentioned explicitly. 

\section{
Preliminaries}\label{sec:2}

By 
concavity, 
we see that
every Bernstein function $
\psi$ satisfies
\begin{equation}\label{e:Berall}
\psi(t\lambda)\le
\lambda
\psi(t)
\qquad \text{ for all }
\lambda \ge 1, t >0.
\end{equation}
Thus 
\begin{equation}\label{eq:decr}
\lambda \mapsto \frac{
\psi(\lambda)}{\lambda} \ \text{ is decreasing,}
\end{equation}
 which implies
\begin{equation}\label{e:Berall1}
\lambda 
\psi'(\lambda)\le
\psi(\lambda)
\qquad \text{ for all }
\lambda >0.
\end{equation}

We first  recall the following results from \cite{KM}.

\begin{lemma}{\rm \cite[Lemma 
4.1]{KM}}\label{lem:gr-10}
Suppose that $
\psi$ is a special Bernstein function, 
i.e., $\lambda \mapsto \frac{\lambda}{\psi(\lambda)}$ is also a Bernstein
function.
Then  the  functions $\eta_1,\eta_2\colon (0,\infty)\rightarrow (0,\infty)$
given by
\[
 \eta_1(\lambda)=\lambda^2
\psi'(\lambda) \ \text{ and }\
\eta_2(\lambda)=\lambda^2\frac{
\psi'(\lambda)}{
\psi(\lambda)^2}
\]
are increasing\,.
\end{lemma}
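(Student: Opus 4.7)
The plan is to establish monotonicity of $\eta_2$ first, and then deduce that of $\eta_1$ by a simple factorization. The key ingredient is the hypothesis that $\psi$ is special: it tells me that the conjugate $\psi^*(\lambda):=\lambda/\psi(\lambda)$ is itself a Bernstein function, so $(\psi^*)'$ is completely monotone and in particular $(\psi^*)''\le 0$ on $(0,\infty)$.

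For $\eta_2$, I would start from the identities $\psi'(\lambda)/\psi(\lambda)^2=-\frac{d}{d\lambda}(1/\psi(\lambda))$ and $1/\psi(\lambda)=\psi^*(\lambda)/\lambda$, and compute
\[
\eta_2(\lambda)=-\lambda^2\,\frac{d}{d\lambda}\frac{\psi^*(\lambda)}{\lambda}=\psi^*(\lambda)-\lambda (\psi^*)'(\lambda).
\]
Differentiating once more gives $\eta_2'(\lambda)=-\lambda(\psi^*)''(\lambda)\ge 0$, so $\eta_2$ is increasing.

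For $\eta_1$, the crucial observation is the factorization $\eta_1(\lambda)=\psi(\lambda)^2\,\eta_2(\lambda)$. Since $\psi$ is a Bernstein function it is non-negative and increasing, hence so is $\psi^2$; combined with the non-negativity and monotonicity of $\eta_2$ just established, this exhibits $\eta_1$ as a product of two non-negative increasing functions, hence $\eta_1$ is increasing as well.

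The only place where the specialness of $\psi$ really enters is in guaranteeing $(\psi^*)''\le 0$; without it, $\psi^*$ need not be Bernstein and the sign of $(\psi^*)''$ is uncontrolled. This is the main conceptual point, and it is unavoidable: for the non-special Bernstein function $\psi(\lambda)=1-e^{-\lambda}$ one already has $\eta_1(\lambda)=\lambda^2 e^{-\lambda}$, which is not monotone on $(0,\infty)$.
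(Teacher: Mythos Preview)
Your argument is correct. The paper does not prove this lemma at all; it merely quotes it from \cite[Lemma 4.1]{KM}, so there is no in-paper proof to compare against. Your computation $\eta_2(\lambda)=\psi^*(\lambda)-\lambda(\psi^*)'(\lambda)$ and hence $\eta_2'(\lambda)=-\lambda(\psi^*)''(\lambda)\ge 0$ is exactly the natural way to exploit the specialness hypothesis, and the factorization $\eta_1=\psi^2\eta_2$ cleanly reduces the first claim to the second. The counterexample $\psi(\lambda)=1-e^{-\lambda}$ at the end is a nice touch showing the hypothesis cannot be dropped.
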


The next result is a simple consequence of Lemma \ref{lem:gr-10} and we will use 
it several times in this paper. 
\begin{corollary}\label{c:new1}
Suppose that $
\psi$ is a special Bernstein function. 
%For every $d \ge 1$, $a>1$ 
For every $d \ge 1$, $a>1$, $\lambda>0$ 
and $b \in (0,1)$
we have 
$$
ba^{-d-3}\lambda^{-d-2}\tfrac{
\psi'(\lambda^{-2})}{
\psi(\lambda^{-2})^2}\le t^{-d-2}\tfrac{
\psi'(t^{-2})}{
\psi(t^{-2})^2}\le a b^{-d-3}\lambda^{-d-2}\tfrac{
\psi'(\lambda^{-2})}{
\psi(\lambda^{-2})^2}\quad  \forall 0<b\lambda \le t \le a \lambda.
$$
\end{corollary}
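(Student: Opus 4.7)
The plan is to exploit Lemma~\ref{lem:gr-10} together with the obvious monotonicity of $g(\lambda):=\psi'(\lambda)/\psi(\lambda)^{2}$. Since $\psi$ is a Bernstein function, $\psi$ is increasing and $\psi'$ (being completely monotone) is decreasing, so $g$ is itself decreasing; Lemma~\ref{lem:gr-10} adds that $\eta_{2}(\lambda)=\lambda^{2}g(\lambda)$ is increasing. Together these give the sandwich
\[
\Bigl(\tfrac{\lambda_{1}}{\lambda_{2}}\Bigr)^{2}g(\lambda_{1})\le g(\lambda_{2})\le g(\lambda_{1}),\qquad 0<\lambda_{1}\le\lambda_{2}.
\]

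Writing $s:=t/\lambda\in[b,a]$ and applying the sandwich to $\{t^{-2},\lambda^{-2}\}$ (sorted according to the sign of $s-1$), one obtains $\min(1,s^{4})\,g(\lambda^{-2})\le g(t^{-2})\le\max(1,s^{4})\,g(\lambda^{-2})$. Setting $h(t):=t^{-d-2}g(t^{-2})$, this yields
\[
\frac{h(t)}{h(\lambda)}=s^{-d-2}\cdot\frac{g(t^{-2})}{g(\lambda^{-2})}\in
\begin{cases}[\,s^{-d-2},\ s^{2-d}\,] & \text{if }s\ge 1,\\[2pt]
[\,s^{2-d},\ s^{-d-2}\,] & \text{if }s\le 1.\end{cases}
\]

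To finish, I would take the infimum of the left endpoints and the supremum of the right endpoints over $s\in[b,a]$. For $d\ge 2$ both $s^{-d-2}$ and $s^{2-d}$ are (weakly) decreasing in $s$, so the infimum equals $a^{-d-2}=a\cdot a^{-d-3}\ge ba^{-d-3}$ and the supremum equals $b^{-d-2}=b\cdot b^{-d-3}\le ab^{-d-3}$, using only $b\le a$. The only remaining case is $d=1$, where the extrema are $\min(a^{-3},b)$ and $\max(b^{-3},a)$; the four elementary inequalities $b\ge ba^{-4}$, $a^{-3}\ge ba^{-4}$, $a\le ab^{-4}$ and $b^{-3}\le ab^{-4}$, each immediate from $0<b<1<a$, then finish the proof. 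The only mildly subtle point is to keep $s^{-d-2}$ and $g(t^{-2})/g(\lambda^{-2})$ coupled through the same $s$ rather than bounding them separately: the worst cases of the two factors sit at different endpoints of $[b,a]$, and a decoupled product yields the weaker constants $a^{4}b^{-d-2}$ and $b^{4}a^{-d-2}$, which genuinely fall short of the target $ab^{-d-3}$ and $ba^{-d-3}$ when, say, $a^{3}b$ is small or large.
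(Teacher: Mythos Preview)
Your proof is correct and follows essentially the same strategy as the paper's: both arguments rest on the two monotonicity facts that $t\mapsto \psi'(t^{-2})/\psi(t^{-2})^2$ is increasing and $t\mapsto t^{-4}\psi'(t^{-2})/\psi(t^{-2})^2$ is decreasing (the latter from Lemma~\ref{lem:gr-10}). The paper handles $d\ge 2$ by observing that the full expression $t^{-d-2}\psi'(t^{-2})/\psi(t^{-2})^2$ is then itself decreasing, and treats $d=1$ via the factorization $t\cdot\bigl(t^{-4}\psi'(t^{-2})/\psi(t^{-2})^2\bigr)$; your parametrization by $s=t/\lambda$ packages the same computation more uniformly and even yields the slightly sharper constants $\min(a^{-3},b)$ and $\max(b^{-3},a)$ for $d=1$ before relaxing to $ba^{-4}$ and $ab^{-4}$.
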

\pf
We use the fact that $t \to t^{-4}\tfrac{
\psi'(t^{-2})}{
\psi(t^{-2})^2}$ is decreasing (by Lemma \ref{lem:gr-10}) and $t \to \tfrac{
\psi'(t^{-2})}{
\psi(t^{-2})^2}$ is increasing.  When $d \ge 2$ for all $0<b\lambda \le t \le a \lambda$
$$
a^{-d-2}\lambda^{-d-2}\tfrac{
\psi'(\lambda^{-2})}{
\psi(\lambda^{-2})^2}\le t^{-d-2}\tfrac{
\psi'(t^{-2})}{
\psi(t^{-2})^2}\le  b^{-d-2}\lambda^{-d-2}\tfrac{
\psi'(\lambda^{-2})}{
\psi(\lambda^{-2})^2}.
$$
If $d=1$, 
then for every $0<b\lambda \le t \le a \lambda$
 \begin{align*}
 t^{-3}\tfrac{
\psi'(t^{-2})}{
\psi(t^{-2})^2}&=t \left( t^{-4}\tfrac{
\psi'(t^{-2})}{
\psi(t^{-2})^2}\right)\le a \lambda  \left(b^{-4}\lambda^{-4}\tfrac{
\psi'((b\lambda)^{-2})}{
\psi((b\lambda)^{-2})^2}\right)\\&\le a b^{-4}\lambda^{-3}\tfrac{
\psi'((b\lambda)^{-2})}{
\psi((b\lambda)^{-2})^2}
\le a b^{-4}\lambda^{-3}\tfrac{
\psi'(\lambda^{-2})}{
\psi(\lambda^{-2})^2},
\end{align*}
and similarly
$$
t^{-3}\tfrac{
\psi'(t^{-2})}{
\psi(t^{-2})^2}\ge b a^{-4}\lambda^{-3}\tfrac{
\psi'(\lambda^{-2})}{
\psi(\lambda^{-2})^2}.
$$
\qed

Recall that we will always assume that 
the Laplace exponent $\phi$ of $S$ 
satisfies {\bf (A-1)}--{\bf (A-3)}.
We also recall the following elementary fact from \cite{KM} 
which says that 
{\bf(A-3)} controls the growth of $\phi$. 
\begin{lemma}{\rm \cite[Lemma 
3.2 (ii)]{KM}}\label{p:USC}
For every $\eps>0$ there exists $c(\eps, \sigma)>1$ such that 
\begin{equation}\label{e:USC}
  \frac{\phi(\lambda x)}{\phi(\lambda)}\leq
c\,x^{1-\delta+\eps}\ \text{ for all
}\ x\geq 1\ \text{ and }\ \lambda\geq\lambda_0\,.
\end{equation}
\end{lemma}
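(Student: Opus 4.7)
The plan is to derive the claimed bound from the infinitesimal statement \textbf{(A-3)} by integration. Starting from
$$\phi(\lambda x)-\phi(\lambda)=\int_\lambda^{\lambda x}\phi'(t)\,dt=\lambda\int_1^x \phi'(\lambda s)\,ds,$$
I would apply \textbf{(A-3)} pointwise inside the integral. For every $s\ge 1$ and $\lambda\ge\lambda_0$ we have $\phi'(\lambda s)\le \sigma s^{-\delta}\phi'(\lambda)$, so that
$$\phi(\lambda x)-\phi(\lambda)\le \sigma\,\lambda\,\phi'(\lambda)\int_1^x s^{-\delta}\,ds.$$

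Next, I would use the general Bernstein-function inequality \eqref{e:Berall1}, namely $\lambda\phi'(\lambda)\le\phi(\lambda)$, to replace $\lambda\phi'(\lambda)$ on the right by $\phi(\lambda)$. Dividing through by $\phi(\lambda)$ yields
$$\frac{\phi(\lambda x)}{\phi(\lambda)}\le 1+\sigma\int_1^x s^{-\delta}\,ds.$$
Now the elementary estimate $\int_1^x s^{-\delta}\,ds\le c_\eps\, x^{1-\delta+\eps}$ for all $x\ge 1$, valid for any fixed $\eps>0$ with a constant $c_\eps$ depending only on $\eps$ and $\delta$, finishes the job: indeed, when $\delta\in(0,1)$ the integral equals $(x^{1-\delta}-1)/(1-\delta)\le x^{1-\delta}/(1-\delta)$, and when $\delta=1$ the integral equals $\log x$, which is dominated by $\eps^{-1}x^{\eps}$ on $[1,\infty)$. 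Combining these bounds and using $1\le x^{1-\delta+\eps}$ for $x\ge 1$ gives the conclusion with $c=1+\sigma c_\eps$.

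There is no real obstacle here; the only point that requires a moment of care is that \textbf{(A-3)} allows $\delta=1$, in which case the integral of $s^{-\delta}$ is logarithmic and does not fit the scaling $x^{1-\delta}$. This is precisely why the statement is phrased with an extra $\eps$: absorbing the logarithm into $x^\eps$ accommodates the borderline case uniformly.
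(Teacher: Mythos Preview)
Your argument is correct. The paper itself does not supply a proof of this lemma but merely cites \cite[Lemma~3.2(ii)]{KM}; your integration of \textbf{(A-3)} together with the Bernstein-function bound \eqref{e:Berall1} is exactly the standard way to pass from a pointwise scaling bound on $\phi'$ to one on $\phi$, and your handling of the borderline case $\delta=1$ via $\log x\le \eps^{-1}x^{\eps}$ is precisely the reason the extra $\eps$ appears in the exponent.
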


The analysis of 1-dimensional subordinate Brownian motions will be crucial in
%this approach. 
our approach in this paper. 
Therefore we now consider an 
one-dimensional subordinate Brownian motion
$(Z_t, \P_x)$
with the characteristic exponent $\phi(\theta^2)$, $\theta \in \R$.

Let \[\overline{Z}_t:=\sup\{0\vee Z_s:0\le s\le t\}\] be the supremum process of
$Z$ and
let $L=(L_t:\, t\ge 0)$ be a local time of $\overline{Z}-Z$ at $0$. 
The right continuous inverse $L^{-1}_t$ of $L$ is a subordinator and it
is called the ladder time process of $Z$. The process $\overline{Z}_{L^{-1}_t}$
is also
a subordinator, called the ladder height process of $Z$.
(For the basic properties of the ladder time and ladder height
processes, we refer 
the reader to \cite[Chapter 6]{Be}.)

Let $\kappa$ 
 be the Laplace exponent of the
ladder height process of $Z$.
It follows from \cite[Corollary 9.7]{Fris} that
\begin{equation}\label{e:formula4leoflh}
\kappa(\lambda)
=\exp\left\{\frac1\pi\int\limits^{\infty}_0\frac{\log(\phi(\lambda^2\theta^2))}
{1+\theta^2}d\theta \right\}\, ,
\quad \forall \lambda>0.
\end{equation}

By our 
assumptions and 
\cite[
Proposition 13.3.7]{KSV3} or \cite[Proposition 2.1]{KMR} we see that 
 the ladder height process of $Z$ has no drift and is not compound Poisson, and
so the process $Z$ does not creep upwards.
Since $Z$ is symmetric, we know that $Z$ also does not creep
downwards. 

Denote by  $V$  the potential measure of the ladder height process of
$Z$. We will slightly abuse notation and use the same letter  $V$ to denote the
renewal function of the
ladder height process of $Z$, that is  $V(t)=V((0,t))$. $V$ is a smooth function by \cite[Corollary
13.3.8]{KSV3}.

Combining \cite[
Proposition
 13.3.7]{KSV3} and  \cite[Proposition III.1]{Be} the
following result holds.
\begin{prop}\label{e:behofV}
There exists a constant $c>1$ such that for all $r>0$
$$
 \tfrac{c^{-1}}{\sqrt{\phi(r^{-2})}} \,\le\, V(r)\, \le\, 
\tfrac{c}{\sqrt{\phi(r^{-2})}}.
$$
\end{prop}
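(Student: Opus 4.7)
The strategy is to reduce the two-sided bound on $V(r)$ to a two-sided bound on $\kappa$, and then to extract $\kappa$ in essentially closed form from Fristedt's formula \eqref{e:formula4leoflh}. As noted just before the statement, the ladder height process of $Z$ is a subordinator with no drift whose L\'evy measure is infinite. For such subordinators, the classical renewal estimate obtained by combining \cite[Proposition~III.1]{Be} with \cite[Proposition~13.3.7]{KSV3} yields
$$V(r)\asymp\frac{1}{\kappa(1/r)}\qquad\text{for every }r>0.$$
It therefore suffices to prove $\kappa(\lambda)\asymp\sqrt{\phi(\lambda^2)}$ for every $\lambda>0$.

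Using the elementary identity $\int_0^\infty(1+\theta^2)^{-1}\,d\theta=\pi/2$, the formula \eqref{e:formula4leoflh} factors as
$$\kappa(\lambda)=\sqrt{\phi(\lambda^2)}\cdot\exp\bigl(I(\lambda)\bigr),\qquad I(\lambda):=\frac{1}{\pi}\int_0^\infty\frac{\log\bigl(\phi(\lambda^2\theta^2)/\phi(\lambda^2)\bigr)}{1+\theta^2}\,d\theta,$$
so everything reduces to bounding $|I(\lambda)|$ by a constant independent of $\lambda$. For this only the Bernstein property is needed, not any of the finer assumptions {\bf(A-1)}--{\bf(A-3)}: from \eqref{e:Berall} applied to $\phi$, $\phi(\lambda^2\theta^2)\le\theta^2\phi(\lambda^2)$ for $\theta\ge 1$; the same inequality, applied instead with base point $\lambda^2\theta^2$ and scaling factor $\theta^{-2}$, gives $\phi(\lambda^2\theta^2)\ge\theta^2\phi(\lambda^2)$ for $0<\theta\le 1$. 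Combining these with the monotonicity of $\phi$ shows
$$\bigl|\log(\phi(\lambda^2\theta^2)/\phi(\lambda^2))\bigr|\le 2|\log\theta|\qquad\text{on }(0,\infty),$$
whence
$$|I(\lambda)|\le\frac{2}{\pi}\int_0^\infty\frac{|\log\theta|}{1+\theta^2}\,d\theta<\infty.$$
Thus $\exp(I(\lambda))$ is trapped between two positive constants, and $\kappa(\lambda)\asymp\sqrt{\phi(\lambda^2)}$ follows, which combined with the first display gives the proposition.

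No step is really an obstacle once the factorisation is spotted; the key observation is that $\pi/2$ produced by the arctangent integral is exactly what is needed for $\sqrt{\phi(\lambda^2)}$ to cleanly factor out of Fristedt's formula. The only place one has to be mildly careful is verifying that the hypotheses of \cite[Proposition~III.1]{Be} are met, but the absence of drift, the infiniteness of the L\'evy measure, and the non-creeping property of $Z$ have all been recorded just before the statement, so nothing further is required.
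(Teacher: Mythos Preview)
Your argument is correct and matches the paper's approach: the paper simply cites \cite[Proposition~13.3.7]{KSV3} and \cite[Proposition~III.1]{Be} without further detail, and what you have done is unpack those citations, in particular deriving the estimate $\kappa(\lambda)\asymp\sqrt{\phi(\lambda^2)}$ directly from Fristedt's formula \eqref{e:formula4leoflh} via the factorisation and the Bernstein bound \eqref{e:Berall}, which is exactly the content of the KSV3 proposition being quoted.
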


We next consider multidimensional subordinate Brownian motions.
Let $W=(W_t=(W^1_{t}, \dots,
W^d_{t}):t\ge 0)$ be a Brownian motion in $\R^d$ with
$$
\E\left[e^{i\theta\cdot(W_t-W_0)}\right]=
e^{-t|\theta|^2}, \qquad \forall\,  \theta \in \R^d, t>0\, ,
$$
and let $S$ be a subordinator independent of $W$ with Laplace exponent $\phi$.
In the remainder of this paper, we always assume that 
 $X=(X_t,\P_x)$ is a subordinate process
 defined by $X_t=W_{S_t}$. This process is a pure-jump symmetric L\' evy process with the characteristic
exponent $\Phi(\xi)=\phi(|\xi|^2)$, i.e.
$$
\E_0 \left[ e^{i \xi \cdot X_t }
\right]=e^{-t\Phi(\xi)}=
e^{-t\phi(|\xi|^2)}\,.
$$
 Moreover, $\Phi$ has the representation
 $$
\Phi(\xi)=\int\limits_{\R^d}(1-\cos(\xi\cdot y))j(|x|)\,dx$$
 with the L\'evy measure of the form $\Pi(dx)=j(|x|)\,dx$,
where
$$
 j(r)=\int_{(0,\infty)} (4\pi
t)^{-d/2}\exp{
\left(-\tfrac{r^2}{4t}\right)}\mu(dt),\, r>0.
$$

For any open set $D$, let us denote by  $\tau_D$ 
 the first exit
time of $D$, i.e. \[\tau_D=\inf\{t>0: \, X_t\notin D\}\,.\]
Using Proposition \ref{e:behofV}, the proof of the next result is the same as the one of \cite[Proposition
3.2]{KSV4}. So we skip the proof. 

\begin{lemma}\label{l:tau}
There exists $c>0$ such that for any  $r\in (0, \infty)$ and $x_0 \in \R^d$,
\begin{eqnarray*}
\E_x[\tau_{B(x_0,r)}]\le  c\, V(r) V(r-|x-x_0|)  \asymp
\tfrac{1}{\sqrt{\phi(r^{-2})\phi((r-|x-x_0|)^{-2})}}
\ \text{ for }\  x\in B(x_0, r).
\end{eqnarray*}
\end{lemma}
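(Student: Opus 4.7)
The equivalence $V(r)V(r-|x-x_0|) \asymp 1/\sqrt{\phi(r^{-2})\phi((r-|x-x_0|)^{-2})}$ is immediate from Proposition \ref{e:behofV}, so only the first inequality requires work. The plan is a one-dimensional reduction in the spirit of \cite[Proposition 3.2]{KSV4}. Fix $x\in B(x_0,r)$ and set $a:=|x-x_0|$. Choose a unit vector $e\in\R^d$ pointing from $x_0$ to $x$ (arbitrary if $a=0$) and set $Z_t:=e\cdot(X_t-x_0)$. Since $e\cdot W$ is a standard one-dimensional Brownian motion and subordination commutes with orthogonal projection, $Z$ is a one-dimensional symmetric subordinate Brownian motion with characteristic exponent $\phi(\theta^2)$ which, under $\P_x$, starts from $a$; in particular its ladder height renewal function is exactly the $V$ of Proposition \ref{e:behofV}. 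The inclusion $B(x_0,r)\subset\{y:|e\cdot(y-x_0)|<r\}$ gives the pathwise domination
$$\tau_{B(x_0,r)}\le \tau^Z:=\inf\{t>0:|Z_t|\ge r\},$$
so that $\E_x[\tau_{B(x_0,r)}]\le \E_a[\tau^Z]$.

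To control $\E_a[\tau^Z]$, I would invoke the standard renewal-theoretic estimate for the exit time of a symmetric 1D subordinate Brownian motion from a symmetric interval,
$$\E_a[\tau^Z]\le c_0\,V(r-a)\,V(r+a)\qquad\text{for }a\in[0,r),$$
which is precisely the content that \cite[Proposition 3.2]{KSV4} packages: its proof uses the Wiener--Hopf factorization together with the non-creeping of $Z$ (noted in the paragraph preceding Proposition \ref{e:behofV}) and is formulated entirely in terms of properties of $V$. Finally, since $a<r$, Lemma \ref{p:USC} applied with $\lambda=(2r)^{-2}$ and $x=4$ yields $\phi(r^{-2})\le c'\phi((2r)^{-2})$, and Proposition \ref{e:behofV} then gives $V(r+a)\le V(2r)\le c_1 V(r)$. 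Substituting produces
$$\E_x[\tau_{B(x_0,r)}]\le c\, V(r)\,V(r-|x-x_0|),$$
and one more use of Proposition \ref{e:behofV} converts this into the $\asymp$-form.

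The only real obstacle is the one-dimensional exit-time bound invoked in the second paragraph, which is imported directly from \cite[Proposition 3.2]{KSV4}; everything else is bookkeeping through the projection. The two ingredients one must verify in order for that transcription to be legitimate under the weaker hypothesis set {\bf (A-1)}--{\bf (A-3)} are the two-sided scaling of $V$ (supplied by Proposition \ref{e:behofV}) and the doubling $V(2r)\le c_1 V(r)$ (supplied by Lemma \ref{p:USC} via Proposition \ref{e:behofV}), both of which are already in hand---this is presumably why the authors feel comfortable omitting the proof.
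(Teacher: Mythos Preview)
Your argument is exactly the one the paper has in mind: it simply declares the proof identical to \cite[Proposition 3.2]{KSV4}, which is the one-dimensional projection plus the renewal-function exit-time bound you spelled out. One small slip: your invocation of Lemma~\ref{p:USC} to get $\phi(r^{-2})\le c'\phi((2r)^{-2})$ requires $(2r)^{-2}\ge\lambda_0$, so it only covers small $r$, whereas the lemma is asserted for all $r\in(0,\infty)$; replace that step by \eqref{e:Berall} (which gives $\phi(r^{-2})\le 4\phi((2r)^{-2})$ for every $r$) or, more directly, by the subadditivity of the renewal function $V(2r)\le 2V(r)$.
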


The process $X$ has a transition density $p(t,x,y)$ given by
\begin{equation}\label{eq:sub-6}
 p(t,x,y)=\int\limits_0^\infty (4\pi
t)^{-d/2}\exp{\left(-\tfrac{|x-y|^2}{4t}\right)}\P(S_t\in ds)\,.
\end{equation}
When $X$ is transient,  we can define the Green function (potential) by
\[
G(x,y)=g(|y-x|)=\int\limits_0^\infty p(t,x,y)\,dt\,.
\]
Note that $g$ and $j$ are decreasing.

The following result is proved in \cite{KM}. Note that there is an error in the statement in \cite[Proposition 4.2]{KM}. It is clear form the proof of \cite[Proposition 4.2]{KM} that \cite[Proposition 4.2]{KM} holds under the condition (A-1), (A-3) and (B)  in \cite{KM}.

\begin{prop}\label{prop:pot-alpha0}
Suppose $\phi$
satisfies {\bf(A-1)}--{\bf(A-4)}.  Then
 we have
\begin{align}
 j(r)\asymp r^{-d-2}\phi'(r^{-2}), \quad  r\to 0+\,. \label{prop:pot-alpha01}
\end{align}
If $X$ is transient, then
\begin{align}
 g(r)\asymp
r^{-d-2}\frac{\phi'(r^{-2})}{\phi(r^{-2})^2},\ r\to
0+\,.\label{prop:pot-alpha02}
\end{align}
\qed
\end{prop}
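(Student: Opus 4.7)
My plan is to use the subordination representation
\[
j(r) = \int_0^\infty (4\pi t)^{-d/2} e^{-r^2/(4t)}\, \mu(dt),
\]
together with the identity $\phi'(\lambda) = \int_0^\infty t e^{-\lambda t}\,\mu(dt)$ that recovers $\phi'$ from the L\'evy measure. As a function of $t$, the integrand in $j(r)$ peaks at $t \asymp r^2$ with value $\asymp r^{-d}$, while $t e^{-t/r^2}$ peaks at $t = r^2$ with value $\asymp r^2$. Hence both $j(r)$ and $r^{-d-2}\phi'(r^{-2})$ are heuristically of order $r^{-d}\cdot \mu([ar^2, br^2])$ for suitable $0 < a < b < \infty$, and the task reduces to turning this heuristic into a rigorous two-sided bound via the scaling conditions on $\phi$.

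For the lower bound, I would restrict the integral defining $j(r)$ to $t \in [r^2, 2r^2]$, on which both $(4\pi t)^{-d/2}$ and $e^{-r^2/(4t)}$ are bounded below by constants times $r^{-d}$ and $1$ respectively, yielding $j(r) \ge c\, r^{-d}\,\mu([r^2, 2r^2])$. The identity for $\phi'$ already gives the easy inequality $\mu([r^2,2r^2]) \le c\, r^2 \phi'(r^{-2})$; the converse $\mu([r^2, 2r^2]) \ge c'\, r^2 \phi'(r^{-2})$ is where \textbf{(A-3)} enters, via a dyadic decomposition of $\phi'(r^{-2})$ combined with the upper scaling $\phi'(\lambda x)/\phi'(\lambda) \le \sigma x^{-\delta}$, which prevents the mass of $t e^{-t/r^2}\mu(dt)$ from escaping the window $[r^2, 2r^2]$. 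For the upper bound on $j(r)$, I would split dyadically into shells $I_k = [2^{k-1} r^2, 2^k r^2]$, estimate $\mu(I_k)$ from $\phi'(2^{-k} r^{-2})$, use the decay from $e^{-r^2/(4t)}$ on shells with $k \le 0$ and from $t^{-d/2}$ on shells with $k \ge 0$, and sum using the polynomial scaling of $\phi'$ afforded by \textbf{(A-3)}.

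The Green function estimate follows the same blueprint starting from $g(r) = \int_0^\infty (4\pi t)^{-d/2} e^{-r^2/(4t)} u(t)\,dt$, where $u$ is the potential density of $S$ (so $\int_0^\infty e^{-\lambda t} u(t)\,dt = 1/\phi(\lambda)$). The natural analogue of $\phi'(\lambda)$ for controlling the dyadic mass of $u$ is $\phi'(\lambda)/\phi(\lambda)^2$, since this equals $-\tfrac{d}{d\lambda}(1/\phi(\lambda))$, and the same peak-at-$t\asymp r^2$ analysis then yields $g(r) \asymp r^{-d-2}\phi'(r^{-2})/\phi(r^{-2})^2$. The main obstacle is that when $\phi$ is only slowly varying, as in the geometric stable case $\phi(\lambda) = \log(1 + \lambda^{\alpha/2})$, classical regular-variation Tauberian machinery is unavailable. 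Condition \textbf{(A-4)}, which supplies the matching lower scaling $\phi'(\lambda x)/\phi'(\lambda) \ge \sigma_0 x^{-\delta_0}$, is precisely what makes the dyadic series for $g(r)$ converge in low dimensions, where the factor $1/\phi(\lambda)^2$ amplifies contributions from large time scales; this explains why \textbf{(A-4)} is needed for the potential but not for the jump kernel.
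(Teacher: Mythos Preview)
The paper does not prove this proposition; it is imported from \cite{KM} (their Proposition~4.2, with a correction to the stated hypothesis noted just before the statement). So there is no proof in the paper to compare against directly. Your outline matches the architecture of the argument in \cite{KM}, but one step understates the real difficulty.

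The crux is the lower bound on $j(r)$. You propose to show $\mu([r^2,2r^2])\ge c'\,r^{-2}\phi'(r^{-2})$ (you wrote $r^{2}$, but $r^{-2}$ is what the heuristic demands) by arguing that \textbf{(A-3)} forces the mass of $t\,e^{-t/r^2}\mu(dt)$ to concentrate on $[r^2,2r^2]$. A dyadic decomposition of $\phi'(r^{-2})$, combined with the easy pointwise bound $\mu(s)\le c\,s^{-2}\phi'(1/s)$ and the upper scaling \textbf{(A-3)}, only yields $\sum_{k\le -1}(\cdots)\le C\,\phi'(r^{-2})$ with an \emph{uncontrolled} constant $C$; one cannot conclude from this that the central block carries a definite fraction of $\phi'(r^{-2})$. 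What \cite{KM} does instead is to first establish the two-sided density estimate $\mu(t)\asymp t^{-2}\phi'(t^{-1})$ for small $t$ (their Proposition~3.3), whose lower half genuinely uses the complete Bernstein structure \textbf{(A-1)} through the Stieltjes representation $\phi(\lambda)=\int_0^\infty\tfrac{\lambda}{\lambda+s}\,\nu(ds)$, not merely the scaling of $\phi'$. Once $\mu(t)\asymp t^{-2}\phi'(t^{-1})$ is available, your dyadic sums for both $j(r)$ and $g(r)$ go through essentially as you describe: the monotonicity of $\lambda\mapsto\lambda^2\phi'(\lambda)$ from Lemma~\ref{lem:gr-10} controls the large-$t$ tail in the upper bound when $d\ge 3$, and \textbf{(A-4)} supplies the extra input needed when $d\le 2$, consistent with your final remark.
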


As a consequence of  \eqref{prop:pot-alpha01}
it follows that if $\phi$
satisfies {\bf(A-1)}--{\bf(A-4)} then
for any $K>0$, there exists $c=c(K)
>1$ such that
\begin{equation}\label{H:1}
j(r)\le c\, j(2r), \qquad \forall r\in (0, K).
\end{equation}
Since $\phi$ is a complete Bernstein function,  there exists a constant $c>0$ such that
$
\mu(t)\leq c\,\mu(t+1)$ for all  $t\geq 1$
(see  \cite[Lemma 2.1]{KSV2}). Thus, using this and \cite[Proposition 3.3]{KM}, 
by the proof of \cite[Proposition 13.3.5] {KSV3}
we see that the function $j$ also enjoys the following property:
 if $\phi$
satisfies {\bf(A-1)}--{\bf(A-4)} then there is a  constant $c>0$
such that
\begin{equation}\label{eq:sub-11}
 j(r+1)\leq j(r)\leq cj(r+1)\ \text{ for all } \ r\geq 1\,.
\end{equation}

Let $D\subset \R^d$ be an open subset. The killed process $X^D$ is defined by
\[X^D_t=X_t\ \text{ if }\ t<\tau_D\ \ \ \text{ and  }\ \ \  X^D_t=\Delta\ \text{
otherwise},\] where $\Delta$ is an extra
point adjoined to $D$ (usually called cemetery).

The transition density of $X^D$ is given by
\[
 p_D(t,x,y)=p(t,x,y)-\E_x\left[p(t-\tau_D,X_{\tau_D},y); \tau_D<t\right]
\]

A subset $D$ of $\R^d$ is said to be Greenian (for $X$) if $X^{D}$ is transient. When
$d\ge 3$, any non-empty open set $D\subset\bR^d$ is
Greenian. An open set $D\subset \bR^d$ is Greenian if
and only if $D^c$
 is non-polar for $X$ (or equivalently, has positive capacity with respect to $X$).
For any Greeninan open  set $D$ in $\R^d$ let
$G_D(x,y)=\int\limits_0^{\infty}p_D(t,x,y)\,dt$
be the Green function of $X^D$.
$G_D(x,y)$ is symmetric and, for fixed $y\in D$, $G_D(\cdot,y)$ is 
harmonic (with respect to $X$) in $D\setminus\{y\}$. 

The next two results are the key estimates in \cite{KM}.

\begin{prop}\label{p:green}
Suppose $X$ is transient and $\phi$
satisfies {\bf(A-1)}--{\bf(A-4)}. There exist constants $c_1,c_2>0$ and
$b_1,b_2\in (0,\frac{1}{2})$, $2b_1<b_2$
such that for all
$x_0\in \R^d$ and $r\in (0,1)$ we have 
%\[
%	c_1 \tfrac{r^{-d-2}\phi'(r^{-2})}{\phi(r^{-2})}\,\E_y\tau_{B(x_0,r)}\leq
%G_{B(x_0,r)}(x,y)\leq c_2
%\tfrac{r^{-d-2}\phi'(r^{-2})}{\phi(r^{-2})}\,\E_y\tau_{B(x_0,r)}\ 
%\]
\begin{equation}\label{e:neweest}
	c_1 \tfrac{r^{-d-2}\phi'(r^{-2})}{\phi(r^{-2})}\,\E_y\tau_{B(x_0,r)}\leq
G_{B(x_0,r)}(x,y)\leq c_2
\tfrac{r^{-d-2}\phi'(r^{-2})}{\phi(r^{-2})}\,\E_y\tau_{B(x_0,r)}\ 
\end{equation}
for all $x\in B(x_0,b_1r)$ and 
$y\in B(x_0,r) \setminus B(x_0,b_2r)$.
\end{prop}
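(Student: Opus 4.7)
The first step is to rewrite the desired bound in a more structural form. By Proposition \ref{prop:pot-alpha0}, $g(s)\asymp s^{-d-2}\phi'(s^{-2})\phi(s^{-2})^{-2}$ for small $s$; by Proposition \ref{e:behofV}, $V(s)\asymp \phi(s^{-2})^{-1/2}$; and by Lemma \ref{l:tau}, $\E_y\tau_{B(x_0,r)}\asymp V(r)V(\delta_{B(x_0,r)}(y))$. Setting $B:=B(x_0,r)$ and substituting, the target \eqref{e:neweest} is equivalent to
\[
G_B(x,y)\,\asymp\, g(|x-y|)\cdot\frac{V(\delta_B(y))}{V(r)}.
\]
In our regime $(b_2-b_1)r\le |x-y|\le 2r$, so Corollary \ref{c:new1} further lets me replace $g(|x-y|)$ by $g(r)$ up to constants depending only on $b_1$ and $b_2$. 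The plan reduces to proving $G_B(x,y)\asymp g(r)\,V(\delta_B(y))/V(r)$ for $x\in B(x_0,b_1r)$ and $y\in B\setminus B(x_0,b_2 r)$.

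For the upper bound, fix $x\in B(x_0,b_1 r)$ and exploit the harmonicity of $z\mapsto G_B(z,x)$ on $B\setminus\{x\}$. Choose the auxiliary ball $B_y:=B(y,\rho)$ with $\rho:=\tfrac{1}{2}(\delta_B(y)\wedge (b_2-2b_1)r)$, so that $\overline{B_y}\subset B$ and $x\notin\overline{B_y}$. Since $X$ has no creeping, the Ikeda-Watanabe formula applied to the regular harmonic function $G_B(\cdot,x)$ on $B_y$ yields
\[
G_B(y,x)=\int_{B\setminus B_y}G_B(z,x)\int_{B_y}G_{B_y}(y,u)\,j(|u-z|)\,du\,dz.
\]
Using $G_B(z,x)\le g(|z-x|)$, the comparabilities on $g$ and $j$ from Proposition \ref{prop:pot-alpha0} and Corollary \ref{c:new1}, the monotonicity properties \eqref{H:1}--\eqref{eq:sub-11} of $j$, and $\int_{B_y}G_{B_y}(y,u)du=\E_y\tau_{B_y}\asymp V(\rho)^2\asymp V(\delta_B(y))^2$, the integral is bounded above by a constant times $g(r)\,V(\delta_B(y))/V(r)$. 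This is done by splitting the $z$-range into a neighborhood of $x$ (where $G_B(z,x)$ can be large but $j(|u-z|)\asymp j(r)$ for $u\in B_y$), a bulk region (where $G_B(z,x)\le c g(r)$), and a shell near $\partial B_y$ (handled by the standard Poisson-kernel estimate for the ball $B_y$).

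For the lower bound, combine an interior Green-function estimate with a boundary-entry argument. Let $D_x:=B(x,b_1 r/4)$. Since $G_B(\cdot,x)$ is regular harmonic on $B\setminus\overline{D_x}$, the strong Markov property gives
\[
G_B(y,x)=\E_y\left[G_B(X_{\tau_{B\setminus \overline{D_x}}},x);\,T_{D_x}<\tau_B\right].
\]
The interior estimate (the easier case of this very proposition, applicable to the ball $B(x,r/2)\subset B$ centered at $x$, together with an interior Harnack inequality for $X$) yields $G_B(w,x)\ge c g(r)$ for $w\in \partial D_x$. Therefore $G_B(y,x)\ge c g(r)\,\P_y(T_{D_x}<\tau_B)$, and a lower bound of order $V(\delta_B(y))/V(r)$ for $\P_y(T_{D_x}<\tau_B)$ is obtained from the Ikeda-Watanabe representation of the hitting distribution on $D_x$, Lemma \ref{l:tau} for $\E_y\tau_B$, and the comparabilities in Proposition \ref{prop:pot-alpha0} and Corollary \ref{c:new1}.

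The main obstacle I expect is the bookkeeping in the upper bound: the Poisson-kernel integral has contributions from $z$ near $x$, from $z$ in the bulk of $B$, and from $z$ in a narrow shell around $\partial B_y$, and all three must simultaneously be controlled by the target bound $g(r)\,V(\delta_B(y))/V(r)$. The doubling-type estimates Corollary \ref{c:new1} and \eqref{H:1}--\eqref{eq:sub-11}, together with the explicit identification of $j$ and $g$ in terms of $\phi$ and $\phi'$ in Proposition \ref{prop:pot-alpha0}, are the essential ingredients that keep the implicit constants uniform in $r\in(0,1)$, $x_0\in\R^d$, and $y$.
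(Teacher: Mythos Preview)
The paper does not prove this proposition here; it is imported as one of the ``key estimates'' from \cite{KM}. So there is no in-paper argument to compare against, but your plan has a genuine gap that needs to be addressed.

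Your reformulation step asserts that, by Lemma~\ref{l:tau}, $\E_y\tau_{B(x_0,r)}\asymp V(r)\,V(\delta_B(y))$. But Lemma~\ref{l:tau} only gives the upper bound $\E_y\tau_B\le c\,V(r)V(\delta_B(y))$; the matching lower bound is not stated anywhere in the paper (Proposition~\ref{p:exittime} only covers points near the center). Without the lower bound on $\E_y\tau_B$, the two targets are \emph{not} equivalent: proving $G_B(x,y)\le c\,g(r)\,V(\delta_B(y))/V(r)$ does not yield the proposition's upper bound $G_B(x,y)\le c\,\tfrac{r^{-d-2}\phi'(r^{-2})}{\phi(r^{-2})}\,\E_y\tau_B$, because that implication needs precisely $\E_y\tau_B\ge c\,V(r)V(\delta_B(y))$. (The lower bound of the proposition does follow from your reformulated lower bound together with Lemma~\ref{l:tau}, so half of your plan survives.) A single-jump estimate from $B(y,\delta_B(y)/2)$ into the core only gives $\E_y\tau_B\ge c\,V(\delta_B(y))^2\cdot r^{-2}\phi'(r^{-2})\,V(r)^2$, and in the slowly varying regime $\lambda\phi'(\lambda)/\phi(\lambda)\to 0$ this is strictly weaker than $V(r)V(\delta_B(y))$; so the missing inequality is not a routine consequence of the tools you cite.

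A second, related issue lurks in your upper-bound sketch. When you exit $B_y=B(y,\rho)$ and land in a thin shell $A(y,\rho,2\rho)$, the Poisson mass there is of order one, while the only a priori bound on $G_B(z,x)$ for such $z$ is $g(|z-x|)\asymp g(r)$. That gives a contribution of size $c\,g(r)$, which exceeds the target $g(r)\,V(\delta_B(y))/V(r)$ when $\delta_B(y)\ll r$. Controlling this shell requires boundary decay of $G_B(\cdot,x)$ near $\partial B$, which is essentially the estimate you are trying to prove. The argument in \cite{KM} avoids both difficulties by working directly with $\E_y\tau_B$ (it appears naturally as $\int_B G_B(y,w)\,dw$ in the L\'evy-system/Poisson-kernel calculation) rather than passing through $V(\delta_B(y))$; you should either follow that route or first establish $\E_y\tau_B\asymp V(r)V(\delta_B(y))$ independently.
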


\begin{prop}\label{p:exittime}
Suppose $X$ is transient and $\phi$
satisfies {\bf(A-1)}--{\bf(A-4)}. There exist constants $c_1>0$ and
$a\in (0,\frac{1}{3})$ so that for 
$x_0\in \R^d$ and $r\in (0,1)$ we have 
\[
\E_x[\tau_{B(x_0,r)}]\geq \tfrac{c_1}{\phi(r^{-2})}\quad \text{ for any } \ x\in
B(x_0,ar)\,.
\]
\end{prop}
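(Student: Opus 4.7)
The plan is to reduce the exit-time lower bound to the corresponding bound for a concentric ball centered at the starting point $x$ itself, and then to establish the latter via a Pruitt-type first-moment estimate for isotropic L\'evy processes.

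First, fix any $a \in (0,\frac{1}{3})$; the triangle inequality gives $B(x,(1-a)r) \subset B(x_0,r)$ whenever $x \in B(x_0, ar)$, so by monotonicity of exit times,
\[
\E_x[\tau_{B(x_0,r)}] \;\ge\; \E_x\bigl[\tau_{B(x,(1-a)r)}\bigr].
\]
By the translation invariance of $X$, it suffices to bound $\E_0[\tau_{B(0,R)}]$ from below by $c/\phi(R^{-2})$ for $R = (1-a)r$.

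For this I would invoke the classical Pruitt-type estimate: for some constant $c_P = c_P(d)$,
\[
\P_0\bigl(\tau_{B(0,R)} \le t\bigr)
 \;=\; \P_0\bigl(\sup_{s \le t}|X_s|>R\bigr)
 \;\le\; c_P\,t\,\Phi^*(R^{-1}),
\]
where $\Phi^*(\lambda):=\sup_{|\xi|\le \lambda}\Phi(\xi)$. Since $\Phi(\xi) = \phi(|\xi|^2)$ is radial and $\phi$ is increasing, $\Phi^*(R^{-1}) = \phi(R^{-2})$. This is a standard consequence of the L\'evy-Khintchine representation of $X$, obtained by applying Markov's inequality to the characteristic function against a smooth bump of scale $R^{-1}$; see, e.g., the classical Pruitt inequality. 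Choosing $t = 1/(2c_P\phi(R^{-2}))$ gives $\P_0(\tau_{B(0,R)}>t) \ge 1/2$, hence
\[
\E_0[\tau_{B(0,R)}] \;\ge\; \tfrac{t}{2} \;=\; \frac{1}{4c_P\,\phi(R^{-2})}.
\]

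Finally, to replace $\phi((1-a)^{-2}r^{-2})$ with $\phi(r^{-2})$, I apply the concavity bound \eqref{e:Berall}, which yields $\phi((1-a)^{-2}r^{-2}) \le (1-a)^{-2}\phi(r^{-2})$. Combining the inequalities produces $\E_x[\tau_{B(x_0,r)}] \ge c_1/\phi(r^{-2})$ with $c_1 = (1-a)^2/(4c_P)$, and any $a \in (0,\frac{1}{3})$ is admissible.

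The main obstacle is recording the Pruitt-type inequality cleanly in the present isotropic subordinate setting; once this is in hand, only monotonicity and concavity of $\phi$ (which are immediate from the Bernstein property, and in particular do not require the full strength of \textbf{(A-3)} or \textbf{(A-4)}) are needed to close the argument. Note that the hypothesis $r \in (0,1)$ is not used in the core bound, so this statement is in fact robust under enlargement of $r$; the restriction enters only for compatibility with the companion asymptotic estimates in \cite{KM}.
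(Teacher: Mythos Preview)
Your argument is correct. The paper does not prove this proposition here; it is imported from \cite{KM} as one of the ``key estimates'' (see the sentence preceding Proposition~\ref{p:green}). Your route via the Pruitt maximal inequality is self-contained and more elementary than the machinery developed in \cite{KM}: it needs only that $\phi$ is a Bernstein function, so that the concavity bound \eqref{e:Berall} is available. Your closing remarks are accurate --- neither transience nor the scaling hypotheses {\bf (A-3)}--{\bf (A-4)} enter the argument, and the restriction $r\in(0,1)$ is immaterial. The one point worth tightening is the Pruitt step itself: rather than leaving it as a black-box citation, in the subordinate setting you can verify directly that the Pruitt function satisfies $h(R)\le C_d\,\phi(R^{-2})$ by writing
\[
h(R)=\int_0^\infty \E\bigl[1\wedge |W_t|^2/R^2\bigr]\,\mu(dt)\le 2d\int_0^\infty(1\wedge t/R^2)\,\mu(dt)
\]
and combining this with $1-e^{-u}\ge (1-e^{-1})(1\wedge u)$ applied inside the representation $\phi(R^{-2})=\int_0^\infty(1-e^{-t/R^2})\,\mu(dt)$.
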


Before we state the Harnack inequality,
we recall the definition of harmonic functions.

\begin{defn}\label{def:har1}
Let $D$ be an open subset of $\R^d$.
A function $u$ defined on $\R^d$ is said to be

\noindent
(i)
  harmonic in $D$ with respect to $X$ if
\[\E_x\left[|u(X_{\tau_{B}})|\right] <\infty\ \ \text{ and }\ \ 
u(x)= \E_x\left[u(X_{\tau_{B}})\right],\ 
 x\in B\,,\]
for every open set $B$ whose closure is a compact
subset of $D$;

\noindent
(ii)
regular harmonic in $D$ with respect to $X$ if
it is harmonic in $D$ with respect to $X$ and
\[u(x)= \E_x\left[u(X_{\tau_{D}})\right]\ \text{ for any }\ x\in D\,.\]
\end{defn}

The following Harnack inequality is the main result of \cite{KM}.
\begin{thm}[Harnack inequality]\label{T:Har}
Suppose that  $\phi$ 
satisfies {\bf (A-1)}--{\bf (A-3)}.
		There exists a constant $c>0$  such that for
all $x_0\in \R^d$ and $r\in (0,1)$ we have
	\[
		h(x_1)\leq c\, h(x_2)\ \text{ for all }\ x_1,x_2\in
B(x_0,r/2)
	\]
	and for every  non-negative function $h\colon \R^d\rightarrow
[0,\infty)$ which
is harmonic in $B(x_0,r)$.
\end{thm}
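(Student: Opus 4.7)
My plan is to follow the standard probabilistic approach to Harnack inequalities for symmetric pure-jump processes in the style of Bass--Levin, adapted to the subordinate Brownian motion setting as in \cite{KSV3, KSV4}. The heuristic is that $X$ can move from $x_1$ to $x_2$ either by accumulating many small increments in a compact region or by executing a single long jump, and both mechanisms are quantitatively strong enough to equate values of nonnegative harmonic functions in $B(x_0, r/2)$ up to a universal constant.

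The two probabilistic ingredients I need are the following.
\begin{itemize}
\item[(I)] (Exit time estimate) There exist $a \in (0, 1)$ and $c_1, c_2 > 0$ such that for all $x_0 \in \R^d$ and $r \in (0, 1)$,
\[ \tfrac{c_1}{\phi(r^{-2})} \le \E_x[\tau_{B(x_0, r)}] \le \tfrac{c_2}{\phi(r^{-2})},\quad x \in B(x_0, ar). \]
\item[(II)] (Hitting estimate) There exist $\eta, c_3 > 0$ such that for all $x_0, r$ as above, every measurable $A \subset B(x_0, r)$ with $|A| \ge \eta\, |B(x_0, r)|$, and every $x \in B(x_0, r/2)$,
\[ \P_x(T_A < \tau_{B(x_0, 2r)}) \ge c_3. \]
\end{itemize}
The upper bound in (I) is Lemma \ref{l:tau}. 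For the matching lower bound, without invoking \textbf{(A-4)}, I would use the Pruitt-type estimate $\P_x(|X_t - x| > r) \le c\, t\, \phi(r^{-2})$, which follows directly from the characteristic exponent $\Phi(\xi) = \phi(|\xi|^2)$ and the inequality $1 - e^{-s} \le s$; optimizing over $t$ then yields $\E_x \tau_{B(x, r/2)} \ge c/\phi(r^{-2})$. Estimate (II) is deduced from (I) via the Dynkin formula applied to $\tau_{B(x_0, 2r)}$, decomposing according to whether the process hits $A$ before exiting and observing that $\sup_y \E_y \tau_{B(x_0, 2r) \setminus A}$ is strictly smaller than $\inf_y \E_y \tau_{B(x_0, 2r)}$ when $|A|/|B(x_0, 2r)|$ is large.

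Granted (I) and (II), the Harnack inequality follows by a Krylov--Safonov type iteration. Given $h \ge 0$ harmonic in $B(x_0, r)$ with $h(x_1) = 1$ for some $x_1 \in B(x_0, r/2)$, suppose for contradiction $h(x_2) \ge M$ at some $x_2 \in B(x_0, r/2)$. Combining (II) with the L\'evy system $\P_x(X_{\tau_B} \in dy) = \bigl( \E_x \int_0^{\tau_B} j(|X_s - y|)\, ds \bigr) dy$ shows that the super-level set $\{h \ge \lambda M\}$ (for a suitably small $\lambda$) must occupy a uniformly positive fraction of $B(x_0, r/2)$; iterating this growth on the sets $\{h \ge \lambda^k M\}$ eventually forces them to contain $x_1$, contradicting $h(x_1) = 1$ once $M$ is large enough.

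The main obstacle is the slowly varying regime, exemplified by the geometric stable Laplace exponents $\phi(\lambda) = \log(1 + \lambda^{\beta/2})$ highlighted in the introduction. There the self-similar scaling used in \cite{KSV3, KSV4} no longer applies, since condition \textbf{(A-3)} provides only the weak one-sided control $\phi'(\lambda x)/\phi'(\lambda) \le \sigma x^{-\delta}$ with no matching lower bound. Consequently all quantitative steps, including the Pruitt estimate, the L\'evy-density comparison, and the iteration, must be formulated directly in terms of $\phi$ and $\phi'$ rather than through power-type asymptotics. The key technical tools that replace scaling in the arguments are the monotonicity of $\lambda \mapsto \lambda^2 \phi'(\lambda)$ from Lemma \ref{lem:gr-10} together with Corollary \ref{c:new1}, and the growth control \eqref{e:USC}, which together supply the comparability estimates needed to close the iteration.
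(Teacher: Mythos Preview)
The paper does not prove Theorem \ref{T:Har}; it simply records the statement and attributes it to \cite{KM}, where it is the main result. So there is no in-paper argument to compare your sketch against.

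For what it is worth, your Bass--Levin/Krylov--Safonov route is a standard and plausible alternative, but it is not the strategy one infers for \cite{KM} from the material this paper imports. The key estimates quoted from \cite{KM} here are Propositions \ref{p:green} and \ref{p:exittime}: sharp two-sided Green function estimates for balls near the boundary together with exit-time bounds, and the Harnack inequality in \cite{KM} is built from those rather than from a support/iteration argument. Note also that those propositions assume transience and \textbf{(A-4)}, while Theorem \ref{T:Har} needs only \textbf{(A-1)}--\textbf{(A-3)}; the reduction is handled in \cite{KM} by the dimension-lifting trick alluded to in the proof of Theorem \ref{UBHP}: embed the $d$-dimensional process as a coordinate projection of a higher-dimensional subordinate Brownian motion with the same $\phi$, which is automatically transient once the dimension is at least $3$ and for which \textbf{(A-4)} is vacuous, prove Harnack there, and read it off on cylinders. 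Your Pruitt-type argument for the exit-time lower bound is a legitimate way to sidestep \textbf{(A-4)} directly, and the iteration you outline would close, but neither this paper nor \cite{KM} proceeds that way.
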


%\begin{remark}\label{rem:green}
%Using Theorem \ref{T:Har} and the standard chain argument, it can be proved that under the assumptions of Proposition \ref{p:green} there exist constants $c_1',c_2'>0$ so that for any $r\in (0,1)$ and $x_0\in \R^d$
%\[
%	c_1' \tfrac{r^{-d-2}\phi'(r^{-2})}{\phi(r^{-2})}\,\E_y\tau_{B(x_0,r)}\leq
%G_{B(x_0,r)}(x,y)\leq c_2'
%\tfrac{r^{-d-2}\phi'(r^{-2})}{\phi(r^{-2})}\,\E_y\tau_{B(x_0,r)}\ 
%\]
%for all $x\in B(x_0,r/2)$ and 
%$y\in B(x_0,r) \setminus B(x_0,3r/4)$.
%\end{remark}

Using Theorem \ref{T:Har} and the standard chain argument to \eqref{e:neweest}, we have 
\begin{corollary}\label{rem:green}
Under the assumptions of Proposition \ref{p:green} there exist constants $c_1,c_2>0$ so that for any $r\in (0,1)$ and $x_0\in \R^d$
\[
	c_1 \tfrac{r^{-d-2}\phi'(r^{-2})}{\phi(r^{-2})}\,\E_y\tau_{B(x_0,r)}\leq
G_{B(x_0,r)}(x,y)\leq c_2
\tfrac{r^{-d-2}\phi'(r^{-2})}{\phi(r^{-2})}\,\E_y\tau_{B(x_0,r)}\ 
\]
for all $x\in B(x_0,r/2)$ and 
$y\in B(x_0,r) \setminus B(x_0,3r/4)$.
\end{corollary}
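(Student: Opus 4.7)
The plan is to use a standard Harnack chain argument to enlarge the range of $x$ in Proposition \ref{p:green} from $B(x_0,b_1 r)$ to $B(x_0,r/2)$. Observe first that the $y$-range $B(x_0,r)\setminus B(x_0,3r/4)$ appearing in the corollary is already contained in the $y$-range $B(x_0,r)\setminus B(x_0,b_2 r)$ of Proposition \ref{p:green}, since $b_2<\tfrac12<\tfrac34$. Hence only the $x$-variable needs to be extended, and both bounds (upper and lower) will follow by the same chain argument applied to the equivalence $G_{B(x_0,r)}(x_0,y)\asymp G_{B(x_0,r)}(x,y)$.

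Fix $y\in B(x_0,r)\setminus B(x_0,3r/4)$ and set $u(\cdot):=G_{B(x_0,r)}(\cdot,y)$. Since $y$ lies outside $\overline{B(x_0,3r/4)}$, the function $u$ is harmonic with respect to $X$ in $B(x_0,3r/4)$. Given any $x\in B(x_0,r/2)$, I would link $x_0$ to $x$ by a chain $x_0=z_0,z_1,\dots,z_N=x$ of points in $\overline{B(x_0,r/2)}$ with $|z_{i+1}-z_i|<r/8$; such a chain can be constructed with $N$ bounded by a dimensional constant (for instance, $N\le 5$ suffices in $\R^d$ by moving along the segment from $x_0$ to $x$). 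For each $i$ the ball $B(z_i,r/4)$ is contained in $B(x_0,3r/4)$, so $u$ is harmonic there, and both $z_i$ and $z_{i+1}$ lie in $B(z_i,r/8)$; Theorem \ref{T:Har}, applicable since $r/4<1$, then gives $u(z_i)\asymp u(z_{i+1})$ with a universal constant. Chaining these comparisons produces $u(x_0)\asymp u(x)$ with a constant depending only on $d$ and $\phi$.

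Finally, since $x_0\in B(x_0,b_1 r)$ and $y\in B(x_0,r)\setminus B(x_0,b_2 r)$, Proposition \ref{p:green} applies and yields
$u(x_0)\asymp \tfrac{r^{-d-2}\phi'(r^{-2})}{\phi(r^{-2})}\,\E_y\tau_{B(x_0,r)}$.
Combining this with $u(x_0)\asymp u(x)$ gives the claimed two-sided estimate.

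The only point deserving attention is the uniformity of the constants. The Harnack constant in Theorem \ref{T:Har} is uniform in $x_0\in\R^d$ and $r\in(0,1)$, and the number of chain steps $N$ depends only on $d$ because all radii in the construction are fixed multiples of $r$. Hence the resulting constants $c_1,c_2$ in the corollary depend only on $\phi$ and $d$, exactly as in Proposition \ref{p:green}.
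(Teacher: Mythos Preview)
Your proof is correct and follows exactly the route indicated in the paper: apply the Harnack inequality (Theorem \ref{T:Har}) via a standard chain argument to extend the $x$-range in \eqref{e:neweest}. One tiny inaccuracy: you write that $y$ lies outside $\overline{B(x_0,3r/4)}$, but the hypothesis $y\in B(x_0,r)\setminus B(x_0,3r/4)$ allows $|y-x_0|=3r/4$; nevertheless $y\notin B(x_0,3r/4)$ (the open ball), so $u=G_{B(x_0,r)}(\cdot,y)$ is indeed harmonic in $B(x_0,3r/4)$ and the chain argument goes through unchanged.
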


By the result of Ikeda and Watanabe (see \cite[Theorem 1]{IW}) the following
formula is true
\begin{equation}\label{eq:sub-105}
 \P_x(X_{\tau_D}\in F)=\int\limits_F\int\limits_DG_D(x,y)j(|z-y|)\,dy\,dz
\end{equation}
for any $F\subset \overline{D}^c$. 
We define the Poisson kernel of the set $D$ by
\begin{equation}\label{eq:sub-10} 
 K_D(x,z)=\int\limits_D G_D(x,y)j(|z-y|)\,dy,
\end{equation}
so that  $\P_x(X_{\tau_D}\in F)=\int\limits_F K_D(x,z)\,dz$ for any $F\subset
\overline{D}^c$.

\begin{prop}\label{p:Poisson1}
Suppose $X$ is transient and $\phi$
satisfies {\bf(A-1)}--{\bf(A-4)}.
There exists $c_1=c_1
( \phi)>0$ and $c_2=c_2
( \phi)>0$ such that for every $r \in (0, 1)$ and $x_0 \in \R^d$,
\begin{eqnarray}
K_{B(x_0,r)}(x,y) \,&\le &\, c_1 \,
\tfrac{j(|y-x_0|-r)}{\sqrt{\phi(r^{-2})\phi((r-|x-x_0|)^{-2})}}\label{P1}\\
 &\le &\, c_1 \, \tfrac{j(|y-x_0|-r)}{ \phi(r^{-2})}\ \label{P1-worse}
\end{eqnarray}
for all $(x,y) \in B(x_0,r)\times \overline{B(x_0,r)}^c$ and
\begin{equation}\label{P2}
K_{B(x_0, r)}(x_0, y) \,\ge\, c_2\, \tfrac{j(|y-x_0|)}{
\phi(r^{-2})} \qquad \textrm{ for all } y \in \overline{B(x_0, r)}^c.
\end{equation}
\end{prop}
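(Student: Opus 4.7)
The plan is to apply the Ikeda--Watanabe representation
\[
K_{B(x_0,r)}(x,y)=\int_{B(x_0,r)} G_{B(x_0,r)}(x,z)\,j(|y-z|)\,dz,
\]
and in each inequality to replace $j(|y-z|)$ by a quantity depending only on $|y-x_0|$ and $r$ (pulled out of the integral), leaving the Green function integral to be controlled by the exit time via the identity $\E_x[\tau_{B(x_0,r)}]=\int_{B(x_0,r)}G_{B(x_0,r)}(x,z)\,dz$.

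For the upper bound \eqref{P1}, the triangle inequality yields $|y-z|\ge |y-x_0|-r>0$ for $z\in B(x_0,r)$ and $y\in\overline{B(x_0,r)}^c$, so monotonicity of $j$ gives $j(|y-z|)\le j(|y-x_0|-r)$. Pulling this constant out of the integral and applying Lemma \ref{l:tau},
\[
K_{B(x_0,r)}(x,y)\le j(|y-x_0|-r)\,\E_x[\tau_{B(x_0,r)}]\le \frac{c\,j(|y-x_0|-r)}{\sqrt{\phi(r^{-2})\,\phi((r-|x-x_0|)^{-2})}},
\]
which is \eqref{P1}. For \eqref{P1-worse}, I note $r-|x-x_0|\le r$, hence $(r-|x-x_0|)^{-2}\ge r^{-2}$, so monotonicity of $\phi$ gives $\phi((r-|x-x_0|)^{-2})\ge\phi(r^{-2})$ and \eqref{P1-worse} follows at once from \eqref{P1}.

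For the lower bound \eqref{P2}, the triangle inequality in the opposite direction gives $|y-z|\le |y-x_0|+r$, so $j(|y-z|)\ge j(|y-x_0|+r)$, whence
\[
K_{B(x_0,r)}(x_0,y)\ge j(|y-x_0|+r)\,\E_{x_0}[\tau_{B(x_0,r)}].
\]
The essential step is absorbing the $+r$ shift, i.e.\ showing $j(|y-x_0|+r)\ge c^{-1}\,j(|y-x_0|)$ uniformly in $r\in(0,1)$ and $y\in\overline{B(x_0,r)}^c$ (so that $s:=|y-x_0|\ge r$). I split by scale: on the short-range regime $s\le 1$ the bound $s+r\le 2s$ combined with \eqref{H:1} (applied with $K=2$) yields $j(s)\le c_1 j(2s)\le c_1 j(s+r)$; on the long-range regime $s\ge 1$ the bound $s+r\le s+1$ combined with \eqref{eq:sub-11} yields $j(s)\le c_2 j(s+1)\le c_2 j(s+r)$. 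This gives $j(s+r)\asymp j(s)$ with a constant uniform in $r\in(0,1)$ and $s\ge r$. Combining with Proposition \ref{p:exittime}, which applied at the center $x_0\in B(x_0,ar)$ yields $\E_{x_0}[\tau_{B(x_0,r)}]\ge c_3/\phi(r^{-2})$, I obtain \eqref{P2}.

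The only nontrivial ingredient is this shift-absorption $j(s+r)\asymp j(s)$, which must treat the singularity of $j$ at the origin and its decay at infinity separately, invoking the two distinct regularity properties of the L\'evy density, namely the near-origin doubling \eqref{H:1} and the far-field near-unit-shift estimate \eqref{eq:sub-11}. Everything else is a direct manipulation of the Ikeda--Watanabe formula together with the Green-function integral identity for the exit time and the previously established bounds of Lemma \ref{l:tau} and Proposition \ref{p:exittime}.
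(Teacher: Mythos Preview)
Your proof is correct and follows the same approach as the paper: bound $j(|y-z|)$ above/below by a quantity depending only on $|y-x_0|$ and $r$, integrate out the Green function to get the expected exit time, and invoke Lemma~\ref{l:tau} and Proposition~\ref{p:exittime} respectively, with the shift $j(|y-x_0|+r)\asymp j(|y-x_0|)$ handled via the near-origin doubling \eqref{H:1} and the far-field estimate \eqref{eq:sub-11}. The paper's own proof is only a two-line sketch referring to \cite[Proposition~13.4.10]{KSV3}; you have correctly supplied the details.
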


\pf
First using \eqref{H:1} and \eqref{eq:sub-11} to \eqref{eq:sub-10}, then applying 
Lemma \ref{l:tau} and Proposition \ref{p:exittime},
\eqref{P1} and \eqref{P2} follow easily (see the proof of \cite[Proposition 13.4.10]{KSV3} for the
details). 
\eqref{P1-worse} follows from \eqref{P1} and the fact that $\phi$ is
increasing.
\qed

\section{Analysis on half-space and $C^{1,1}$ open sets}\label{sec:hs}

In this section we
establish key estimates which will be used in sections
later in this paper. 

Recall that  $X=(X_t:\, t\ge 0)$ is the $d$-dimensional
subordinate
Brownian motion defined by $X_t=W_{S_t}$ where $W=(W^1,\dots, W^d)$
is a (not necessarily transient) $d$-dimensional Brownian motion and $S=(S_t:\,
t\ge 0)$ an
independent subordinator with the Laplace exponent $\phi$
satisfying {\bf(A-1)}-{\bf(A-3)}.  In this section, we further assume that {\bf(A-4)} holds.

 Let $Z=(Z_t:\, t\ge 0)$ be
the one-dimensional subordinate Brownian motion defined 
by
$Z_t:=W^d_{S_t}$.

Recall that $V$ 
denotes the renewal function of the ladder height process of
$Z$.
We use the notation \[\bR^d_+:=\{
x=(x_1, \dots, x_{d-1}, x_d):=(\tilde{x}, x_d)  \in \bR^d: x_d > 0
\}\] for the half-space. 

Set $w(x):=V((x_d)^+)$.
Since $Z_t=W^d_{S_t}$ has a transition density,
by using \cite[Theorem 2]{Sil},
the proof of the next result is the same as the one of \cite[Theorem
4.1]{KSV2}. We omit the proof. 

\begin{thm}\label{t:Sil}
The function $w$ is harmonic in $\R^d_+$ with respect to $X$
and, for any $r>0$, regular harmonic in
$\R^{d-1}\times (0, r)$ 
for $X$.
\end{thm}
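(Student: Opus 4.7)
The approach is to reduce both claims to the one-dimensional subordinate Brownian motion $Z_t = W^d_{S_t}$, using two decisive features of the setup: $w(x) = V(x_d^+)$ depends only on the last coordinate, and because subordination acts coordinate-wise on $W$ one has $X^d_t = Z_t$ pathwise. Consequently, for any $r>0$ the exit time of $X$ from the slab $U_r := \R^{d-1}\times(0,r)$ coincides with the exit time of $Z$ from $(0,r)$, which is the pivot for the whole argument.

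For the slab, I would invoke \cite[Theorem 2]{Sil} applied to $Z$. The preceding discussion already records that the ladder height process of $Z$ has no drift and is not compound Poisson, and that $Z$ creeps neither upwards nor downwards; these are exactly the hypotheses under which Silverstein's theorem yields that $V$ (extended by $0$ to $(-\infty,0]$) is regular harmonic for $Z$ on each $(0,r)$, namely $V(y)=\E_y[V((Z_{\tau^Z_{(0,r)}})^+)]$ for $y\in(0,r)$. Since $w$ depends only on $x_d$ and $\tau^X_{U_r}=\tau^Z_{(0,r)}$ pathwise,
\begin{equation*}
\E_x\!\left[w(X_{\tau^X_{U_r}})\right]=\E_{x_d}\!\left[V\bigl((Z_{\tau^Z_{(0,r)}})^+\bigr)\right]=V(x_d)=w(x),\qquad x\in U_r,
\end{equation*}
which is the required regular harmonic identity.

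To upgrade to harmonicity on $\R^d_+$, I take any open $B$ with $\overline{B}$ compact in $\R^d_+$. Then $B\subset U_R$ for some $R>0$, so $\tau_B\le\tau^X_{U_R}$. Applying the strong Markov property at $\tau_B$ together with the regular harmonicity on $U_R$ (valid at $x$ and, trivially by the convention $\tau^X_{U_R}=0$ on $\{X_{\tau_B}\notin U_R\}$, at $X_{\tau_B}$) yields
\begin{equation*}
w(x)=\E_x\!\left[w(X_{\tau^X_{U_R}})\right]=\E_x\!\left[\E_{X_{\tau_B}}\!\left[w(X_{\tau^X_{U_R}})\right]\right]=\E_x\!\left[w(X_{\tau_B})\right],
\end{equation*}
which is the harmonic identity demanded by Definition \ref{def:har1}(i). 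The one genuinely delicate point throughout is the integrability needed to apply Silverstein's theorem and the strong Markov identity: since $V$ is unbounded, one must control the overshoot of $X$ across the slab boundary. This is handled by the two-sided bound $V(s)\asymp\phi(s^{-2})^{-1/2}$ from Proposition \ref{e:behofV} combined with the L\'evy-density tail estimate \eqref{eq:sub-11}. Everything else is mechanical once $X^d=Z$ is exploited, which is why the authors content themselves with the reference to the parallel proof of \cite[Theorem 4.1]{KSV2}.
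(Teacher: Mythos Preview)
Your proposal is correct and follows essentially the same route the paper defers to in \cite[Theorem 4.1]{KSV2}: reduce to the one-dimensional process $Z_t=W^d_{S_t}$ via the identity $\tau^X_{U_r}=\tau^Z_{(0,r)}$, invoke \cite[Theorem 2]{Sil} for regular harmonicity of $V$ on $(0,r)$, and then upgrade to harmonicity on $\R^d_+$ by the strong Markov property. One small remark: the paper singles out the fact that $Z$ has a transition density as the operative hypothesis for Silverstein's theorem, whereas you emphasize the no-drift/no-creeping properties of the ladder height process; both are recorded in the paper's preliminaries and the argument goes through either way.
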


Using  Theorem \ref{t:Sil}, \eqref{H:1} and \eqref{eq:sub-11}, 
the proof of the next result is the same as the one of \cite[Proposition
3.3]{KSV5}.

\begin{prop}
\label{c:cforI}
For all positive constants
$r_0$ and $L$, we have
$$
\sup_{x \in \R^d:\, 0<x_d <L} \int\limits_{B(x, r_0)^c \cap \bR^d_+}
w(y) j(|x-y|)\, dy < \infty\, .
$$
\end{prop}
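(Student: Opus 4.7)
The plan is to exploit the harmonicity of $w$ on $\R^d_+$ established in Theorem \ref{t:Sil}. For a fixed $x = (\tilde{x}, x_d)$ with $0 < x_d < L$, I lift to $\bar{x} := (\tilde{x}, L+1)$ and consider the ball $B := B(\bar{x}, 1/2)$, which satisfies $\overline{B} \subset \R^d_+$ and $|x - \bar{x}| \le L + 1$. Since $b=0$ by (A-3), $X$ exits $B$ only by jumping, so by the Ikeda--Watanabe formula \eqref{eq:sub-105} together with $w \equiv 0$ on $(\R^d_+)^c$ we obtain
\[
w(\bar{x}) \;=\; \int_{\R^d_+ \cap \overline{B}^c} w(y)\, K_B(\bar{x}, y)\, dy.
\]

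For $z \in B$ we have $|y - z| \le |y - \bar{x}| + 1/2$, so \eqref{eq:sub-10} and monotonicity of $j$ yield $K_B(\bar{x}, y) \ge \E_{\bar{x}}[\tau_B]\, j(|y - \bar{x}| + 1/2)$, where $\E_{\bar{x}}[\tau_B] = \E_0[\tau_{B(0, 1/2)}]$ is a strictly positive constant independent of $x$ by translation invariance of $X$. The key step is the uniform comparison $j(|y - \bar{x}| + 1/2) \ge c\, j(|y - x|)$ for $y \in \R^d_+$ with $|y - x| > r_0$. Since $|y - \bar{x}| + 1/2 \le |y - x| + L + 3/2$, I split into cases: when $|y - x| \ge 1$, iterated application of \eqref{eq:sub-11} yields a constant $c_1 = c_1(L) > 0$ with $j(|y - x| + L + 3/2) \ge c_1 j(|y - x|)$, and the conclusion follows from monotonicity of $j$; when $r_0 \le |y - x| < 1$, both $j(|y - x|)$ and $j(|y - \bar{x}| + 1/2)$ lie in a compact range bounded away from $0$ and $\infty$ (by continuity of $j$ on compacts avoiding the origin), hence are uniformly comparable.

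Combining the two bounds yields a constant $c > 0$, depending only on $r_0$, $L$ and $\phi$, such that
\[
w(\bar{x}) \;\ge\; c \int_{\R^d_+ \cap \overline{B}^c \cap \{|y - x| > r_0\}} w(y)\, j(|y - x|)\, dy.
\]
On the residual region $\overline{B} \cap \{|y - x| > r_0\} \cap \R^d_+$, which is bounded, one has $w(y) \le V(L + 3/2)$ and $j(|y - x|) \le j(r_0) < \infty$, so its contribution to $I(x) := \int_{B(x,r_0)^c \cap \R^d_+} w(y) j(|y-x|) dy$ is controlled by a constant depending only on $L, r_0$. Since $w(\bar{x}) = V(L+1)$ is independent of $x$, adding the two contributions furnishes the required uniform bound. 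The main obstacle is the uniform comparison of $j$ at two translated arguments, which relies crucially on the L\'evy-density tail estimate \eqref{eq:sub-11} (valid under (A-4)) together with continuity of $j$ on compact sets avoiding the origin.
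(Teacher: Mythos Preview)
Your argument is correct and follows essentially the same approach as the paper, which defers to \cite[Proposition 3.3]{KSV5}: exploit the harmonicity of $w$ from Theorem~\ref{t:Sil} to write $w(\bar x)=\E_{\bar x}[w(X_{\tau_B})]$ for a ball $B$ compactly contained in $\R^d_+$, bound the Poisson kernel below by $\E_{\bar x}[\tau_B]\,j(|y-\bar x|+\tfrac12)$, and then compare $j(|y-\bar x|+\tfrac12)$ with $j(|y-x|)$ using the tail estimate \eqref{eq:sub-11} for large arguments and bounded comparability on compacts (the paper cites \eqref{H:1} for the latter, you use continuity of $j$, which is equivalent here). One minor remark: you do not actually need the equality $w(\bar x)=\int_{\overline{B}^c}w(y)K_B(\bar x,y)\,dy$ (i.e., that exits are by jumps only); the inequality $w(\bar x)\ge \int_{\overline{B}^c}w(y)K_B(\bar x,y)\,dy$, immediate from $w\ge 0$ and Ikeda--Watanabe, already suffices.
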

Define an operator
($\sA$, $\mathfrak{D}(\sA)$)
by
\begin{align}
 \sA  f(x)&:= \mathrm{p.v.} \int\limits_{\R^d}
\left(f(y)-f(x)\right)j(|y-x|)\, dy\nn\\&:=\lim_{\eps \downarrow 0}
\int\limits_{\{y\in \bR^d: |x-y| > \eps\}}
\left(f(y)-f(x)\right)j(|y-x|)\, dy\, \nn\\
\mathfrak{D}(\sA)\nn
&:=\left\{f:\R^d\to \R: \lim_{\eps \downarrow 0}
\int\limits_{\{y\in \bR^d: |x-y| > \eps\}}
\left(f(y)-f(x)\right)j(|y-x|)\, dy \right.\\&\qquad\qquad\left.\text{ exists
and it is finite } \right\}.
\label{generator}
\end{align}

Let $C^2_0$ be the collection of $C^2$
functions in
$\RR^d$ vanishing at infinity. 
 It is well known that
$C^2_0\subset \mathfrak{D}(\sA)$ and
that by the rotational symmetry of $X$, $\sA$ restricted to $C^2_0$
coincides with the infinitesimal generator $\sL$ of the process $X$ (see e.g.
\cite[Theorem 31.5]{S}).

Since $V$ is smooth by \cite[Corollary 13.3.8]{KSV3}, using our Theorem \ref{t:Sil},
\eqref{H:1} and 
\eqref{eq:sub-11},  
the proof of the next result is the same as \cite[Proposition
3.3]{KSV5} or \cite[Proposition
4.2]{KSV2}, so we skip the proof.

\begin{thm}\label{c:Aw=0}
$\sA w(x)$ is well defined and $\sA w(x)=0$ for all $x \in \bR^d_+$.
\end{thm}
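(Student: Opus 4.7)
The plan is to establish two things in sequence: first, that the principal value defining $\sA w(x)$ converges absolutely after the symmetric cancellation near $x$; second, that its value is zero, by exploiting the regular harmonicity of $w$ on small balls contained in $\R^d_+$ and the Ikeda--Watanabe Poisson kernel representation.

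For well-definedness, I would fix $x \in \R^d_+$ with $x_d = a > 0$, choose $r_0 \in (0, a/2)$ so that $B(x, r_0) \subset \R^d_+$, and split the integrand at radius $r_0$. On $B(x, r_0)$, smoothness of $V$ on $(0,\infty)$ from \cite[Corollary 13.3.8]{KSV3} gives $w \in C^2$, so Taylor expansion yields $w(y) - w(x) = \nabla w(x) \cdot (y - x) + O(|y-x|^2)$ there; the linear term cancels on every annulus $\{\epsilon < |y-x| < r_0\}$ by rotational symmetry of $j(|y-x|)$, and the quadratic remainder is absolutely integrable because $\int_0^{r_0} r^{d+1} j(r)\, dr < \infty$ by the L\'evy integrability of $j$. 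On $\{|y-x| \ge r_0\}$ I would split according to the sign of $y_d$: where $y_d \le 0$, $w$ vanishes, so the contribution reduces to $-w(x) \int j(|y-x|)\, dy$ over a region on which $j$ is integrable; where $y_d > 0$, Proposition \ref{c:cforI} supplies the required bound.

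To show the principal value vanishes, I would fix $\epsilon \in (0, r_0)$, so $B(x, \epsilon)$ sits inside the slab $\R^{d-1} \times (0, 2a)$ where $w$ is regular harmonic by Theorem \ref{t:Sil}. Combined with the tail finiteness from the first step, a standard truncation argument upgrades this to regular harmonicity in $B(x, \epsilon)$, giving
\begin{equation*}
w(x) \;=\; \E_x\bigl[w(X_{\tau_{B(x,\epsilon)}})\bigr] \;=\; \int_{B(x,\epsilon)^c} w(y)\, K_{B(x,\epsilon)}(x,y)\, dy.
\end{equation*}
Since $X$ has no killing, $\int_{B(x,\epsilon)^c} K_{B(x,\epsilon)}(x,y)\, dy = 1$, which yields
\begin{equation*}
0 \;=\; \int_{B(x,\epsilon)^c} \bigl[w(y) - w(x)\bigr]\, K_{B(x,\epsilon)}(x,y)\, dy.
\end{equation*}
Next I would use $K_{B(x,\epsilon)}(x, y) = \int_{B(x,\epsilon)} G_{B(x,\epsilon)}(x,z) j(|y-z|)\, dz$ together with the doubling properties \eqref{H:1} and \eqref{eq:sub-11} to derive, for $|y - x| > 2\epsilon$, the asymptotic $K_{B(x,\epsilon)}(x, y) = \E_x[\tau_{B(x,\epsilon)}]\, j(|y-x|)(1 + o(1))$ as $\epsilon \to 0$. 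Dividing the last display by $\E_x[\tau_{B(x,\epsilon)}]$ and sending $\epsilon \to 0$, the piece on $\{|y-x| > 2\epsilon\}$ will converge to $\sA w(x)$, while the narrow annular piece on $\{\epsilon < |y-x| \le 2\epsilon\}$ will vanish by the same Taylor cancellation used in the first step.

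The hard part will be the limit passage in the second step: one must verify that $K_{B(x,\epsilon)}(x,y) / \bigl(\E_x[\tau_{B(x,\epsilon)}]\, j(|y-x|)\bigr)$ tends to $1$ uniformly enough in $y$ to justify a dominated-convergence exchange of limit and integral on $B(x,\epsilon)^c$, and to handle the boundary annulus $\{\epsilon < |y-x| \le 2\epsilon\}$ where the approximation breaks down. Both ingredients rely on the doubling estimates \eqref{H:1}, \eqref{eq:sub-11} and the smoothness of $V$ recorded in Section \ref{sec:2}, which is exactly why the authors simply invoke the parallel argument of \cite[Proposition 3.3]{KSV5} and \cite[Proposition 4.2]{KSV2}.
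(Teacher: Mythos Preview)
Your proposal is correct and follows essentially the same route the paper defers to: the paper does not give its own proof but points to \cite[Proposition 4.2]{KSV2} and \cite[Proposition 3.3]{KSV5}, listing as inputs the smoothness of $V$, Theorem \ref{t:Sil}, and the doubling estimates \eqref{H:1}, \eqref{eq:sub-11} --- precisely the ingredients you use in the Taylor-expansion/Poisson-kernel argument you outline. Your sketch is a faithful unpacking of that cited argument, including the delicate $\epsilon\to 0$ passage you flag at the end.
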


In the rest of this section we aim to prove two key estimates of the exit
probability
and the exit time for $C^{1,1}$ open sets. Let us recall the definition of 
a
$C^{1,1}$ open set. 
\begin{defn}\label{def:c11}
An open set $D$ in $\bR^d$ ($d\ge 2$) is said to be
a $C^{1,1}$ open set if there exist a localization radius $R>0$ and
a constant $\Lambda>0$ such that for every $z\in\partial D$, there
exist a $C^{1,1}$-function $\psi=\psi_z: \bR^{d-1}\to \bR$
satisfying $\psi (0)=0$,  $\nabla\psi (0)=(0, \dots, 0)$, \[\| \nabla
\psi \|_\infty \leq \Lambda,\ \ \ \ | \nabla \psi (x)-\nabla \psi (w)|
\leq \Lambda |x-w|, \quad x,w \in \R^{d-1}\] and an orthonormal coordinate system $CS_z$:
$y=(y_1, \cdots, y_{d-1}, y_d):=(\wt y, \, y_d)$ with origin at $z$
such that
$$
B(z,R)\cap D=\{ y=(\wt y, \, y_d)\in B(0, R) \mbox{ in } CS_z: y_d
>\psi (\wt y) \}.
$$
The pair $(R, \Lambda)$ is called the characteristics of the
$C^{1,1}$ open set $D$. 
By a $C^{1,1}$ open set in $\bR$ we mean an open
set which can be expressed as the union of disjoint intervals so
that the minimum of the lengths of all these intervals is positive
and the minimum of the distances between these intervals is
positive.
\end{defn}
\begin{remark}
 In some literature, the
$C^{1,1}$ open set defined above is called a {\it uniform} $C^{1,1}$
open set since $(R, \Lambda)$ is universal for all $z\in \partial D$.
\end{remark}

For $x\in \bR^d$, let $\delta_{\partial D}(x)$ denote the Euclidean
distance between $x$ and $\partial D$.
Recall that for 
any
$x\in \bR^d$, $\delta_{ D}(x)$ is the Euclidean
distance between $x$ and $D^c$.
It is well known that any
$C^{1, 1}$ open set $D$ with characteristics $(R, \Lambda)$ there exists 
$r_1>0$
so that 
the following holds true:
\begin{itemize}
	\item[(i)] {\it uniform interior ball condition}, i.e.
	 for every $x\in D$
with $\delta_{D}(x)< r_1$ there exists $z_x\in \partial D$ so that
\[|x-z_x|=\delta_{\partial D
}(x)\ \ \text{ and }\ \ 
B(x_0, r_1)\subset D,\]
for $x_0=z_x+r_1\frac{x-z_x}{|x-z_x|}$ ;
	\item[(ii)] {\it uniform exterior ball condition}, i.e.
	 for every $y\in \R^d\setminus D$
with $\delta_{
\partial D}(y)< r_1$ there exists $z_y\in \partial D$ so that
\[|y-z_y|=\delta_{\partial D
}(y)\ \ \text{ and }\ \ 
B(y_0, r_1)\subset \R^d\setminus D,\]
for $y_0=z_y+r_1\frac{y-z_y}{|y-z_y|}$.
\end{itemize}

Assume for the rest of this section that  $D$ is a $C^{1,1}$ open
set with characteristics $(R, \Lambda)$
satisfying the  uniform
interior ball condition and the uniform exterior ball condition with
the radius $R\leq 1$ (by choosing $R$ smaller if necessary).

Before we prove our technical Lemma \ref{L:Main} below, we need some preparation. 
\begin{lemma}\label{l:new}
Under the assumption {\bf(A-5)},
 if $d \ge 2$ and the constant  $\delta$ in {\bf(A-3)}  satisfies  $0<\delta \le 
\frac{1}{2}$, then for every $M>0$
$$
\sup_{x \in [0, M/4]}\int\limits_{0}^{M}  v(s/6)\left(
\phi(
|s-
x|^{-2}) 
|s- x| +
\int\limits_{|s-
x|}^{ {M}} \phi(r^{-2}) 
 dr
\right) ds =c(M, \phi)< \infty.$$
\end{lemma}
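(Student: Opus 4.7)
My approach is a direct estimation: bound the integrand pointwise using Lemma~\ref{p:USC} and (A-5), then integrate carefully, separating the two singular regimes.

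First I would fix $\varepsilon\in(0,\delta)$ and apply Lemma~\ref{p:USC} to obtain $\phi(r^{-2})\le C\,r^{-2+2\delta-2\varepsilon}$ for $r$ small (with $\phi(r^{-2})$ uniformly bounded on the complementary range where $r$ is bounded below by $1/\sqrt{\lambda_0}$). The condition $\delta\le \tfrac{1}{2}$ then gives $-1+2\delta-2\varepsilon<0$, so a direct antidifferentiation yields
\begin{align*}
 \phi(|s-x|^{-2})\,|s-x|&\le C\bigl(|s-x|^{-1+2\delta-2\varepsilon}+1\bigr),\\
 \int_{|s-x|}^{M}\phi(r^{-2})\,dr&\le C\bigl(|s-x|^{-1+2\delta-2\varepsilon}+1\bigr).
\end{align*}
Thus the bracketed factor in the statement is dominated by $C\bigl(|s-x|^{-1+2\delta-2\varepsilon}+1\bigr)$.

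Next I would control the factor $v(s/6)$ via Proposition~\ref{e:behofV} combined with (A-5), which together give a power-type upper bound on $v$ near the origin (in particular $v$ is locally integrable on $[0,M/6]$, and the bound $v(u)\le C u^{-\delta_1}$ with $\delta_1<1$ follows from $V(u)\le C\,u^{1-\delta_1}$ and the standard relation between $V$ and $v$ for special Bernstein functions). To bound
\[
 \int_0^M v(s/6)\,\bigl(|s-x|^{-1+2\delta-2\varepsilon}+1\bigr)\,ds
\]
uniformly in $x\in[0,M/4]$, I would split the range of integration at $|s-x|=s/2$. On $\{|s-x|\ge s/2\}$ we have $s\lesssim|s-x|$, so the integrand is controlled by a single-variable power $s^{-\delta_1-1+2\delta-2\varepsilon}$ which is integrable at $s=0$ once $\varepsilon$ is small. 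On $\{|s-x|<s/2\}$, conversely, $s$ is comparable to $x$, so $v(s/6)$ is essentially a constant factor that can be pulled out of the integral, leaving a standard translation-invariant singular integral of $|s-x|^{-1+2\delta-2\varepsilon}$ which is uniformly integrable since $-1+2\delta-2\varepsilon>-1$.

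The main obstacle is the simultaneous presence of the singularity of $v(s/6)$ at $s=0$ and of the bracketed factor at $s=x$, particularly when $x$ is itself near $0$; the splitting above is what separates these two singularities and reduces the problem to elementary power integrals whose convergence follows from $\delta\le \tfrac{1}{2}$ and the choice $\varepsilon\in(0,\delta)$, with the bound depending only on $M$, $\phi$, and the constants in (A-3) and (A-5).
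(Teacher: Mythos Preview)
There is a genuine gap in your integrability claim on the region $\{|s-x|\ge s/2\}$. You bound the integrand there by a constant times $s^{-\delta_1-1+2\delta-2\varepsilon}$ and assert this is integrable at $s=0$ ``once $\varepsilon$ is small''. But integrability at $0$ requires $-\delta_1-1+2\delta-2\varepsilon>-1$, i.e.\ $\delta_1<2\delta-2\varepsilon$, and hence $\delta_1<2\delta$. Assumption \textbf{(A-5)} only stipulates $\delta_1\in[\delta,1)$; nothing forces $\delta_1<2\delta$. The hypotheses are consistent with, e.g., $\delta=\tfrac14$ and $\delta_1=\tfrac34$, and then your power bound diverges. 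The difficulty is structural: you combine the \emph{loosest} upper bound on $\phi$ allowed by \textbf{(A-3)} with the \emph{loosest} upper bound on $v$ allowed by \textbf{(A-5)}, and these two worst cases need not occur together for the same $\phi$.

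The paper's proof avoids this by never reducing both factors to bare powers simultaneously. It keeps $v(s/6)\le c\,s^{-1}\phi(s^{-2})^{-1/2}$ (monotonicity of $v$) and, in the inner $r$-integral, writes $\phi(r^{-2})$ as a product of the ratio $\phi(r^{-2})^{1/2}/\phi(|s-x|^{-2})^{1/2}$ (bounded via \textbf{(A-5)} by $c(|s-x|/r)^{1-\delta_1}$) times $\phi(|s-x|^{-2})^{1/2}$ and $\phi(r^{-2})^{1/2}$, converting only one $\phi^{1/2}$ to a power through Lemma~\ref{p:USC}. The leftover $\phi(s^{-2})^{-1/2}$ from the $v$-bound then cancels against a matching $\phi^{1/2}$ factor. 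After the $r$-integration this yields $s^{-1+\delta-\varepsilon}$ in the case $\delta_1+\delta-\varepsilon<1$, and $s^{-\delta_1}$ or $s^{-\delta_1}\log(M/s)$ otherwise --- all integrable because $\delta_1<1$. This coupling of the $v$-bound and the $\phi(r^{-2})$-bound through the common function $\phi$ is precisely the device your argument lacks.
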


\pf
Let
$$
I:=\int\limits_0^{ x/2} 
  v(s/6)\left(
\phi(
|s-
x|^{-2}) 
|s- x| +
\int\limits_{|s-
x|}^{ {M}} \phi(r^{-2}) 
 dr
\right) ds
$$
$$
II:= \int\limits_{x/2}^{2x}
  v(s/6)\left(
\phi(
|s-
x|^{-2}) 
|s- x| +
\int\limits_{|s-
x|}^{ {M}} \phi(r^{-2}) 
 dr
\right) ds
$$
and 
$$
III:= \int\limits_{2x}^{ M}
  v(s/6)\left(
\phi(
|s-
x|^{-2}) 
|s- x| +
\int\limits_{|s-
x|}^{ {M}} \phi(r^{-2}) 
 dr
\right) ds.
$$
We consider these three parts separately. 

First, for $s\in (0, x/2)$, we have
$x \ge x-s=|x-s|
\ge x/2$. Thus  
using  \eqref{e:Berall} and Proposition \ref{e:behofV},
\begin{align*}
I\le&  x \phi(4 x^{-2}) \int\limits_0^{ x/2}
v(s/6)   ds+\int\limits_0^{ x/2} 
  v(s/6)ds 
\int\limits_{x/2}^{ {M}} \phi(
r^{-2}) 
 dr
 \\
\le& 6\left(4  x \phi(x^{-2})+ \int\limits_{x/2}^{ {M}} \phi(
r^{-2}) 
 dr \right)  V(x/12) \le c_{1}x \phi(x^{-2})^{1/2}+c_1\int\limits_{x/2}^{ {M}} \tfrac{\phi(r^{-2})}{ \phi( x^{-2})^{1/2}}
 dr.
\end{align*}
The first term is finite by Lemma \ref{p:USC}. Also by Lemma \ref{p:USC} with $\eps=\delta/2$, 
\begin{align*}
\int\limits_{x/2}^{ {M}} \tfrac{\phi(r^{-2})}{ \phi( x^{-2})^{1/2}}
 dr
 \le c_2 \int\limits_{x/2}^{ {M}} \phi(r^{-2})^{1/2} \le c_3 \int\limits_{x/2}^{ M}r^{-(1-\delta+\eps)}dr
 \le c_4 M^{\delta -\eps} <\infty.
\end{align*}

Applying Proposition \ref{e:behofV}, we deduce
\begin{eqnarray}
v(s/6) \le \frac6{s} \int\limits_{0}^{s/6}v(t)dt =  \frac6{s} V(s/6) \le
c_{5}
\frac1{s} \phi(s^{-2})^{-1/2}, \quad 
\text{ for all }\ s>0.
\label{e:nn5}
\end{eqnarray}
By  \eqref{e:Berall} and  \eqref{e:nn5},
\begin{align*}
II\le &c_5x^{-1}  \phi( x^{-2})^{-1/2}  \int\limits_{x/2}^{2x}
 \phi(|s- x|^{-2}) |s-x| ds+c_5 x^{-1}  \phi( x^{-2})^{-1/2}  \int\limits_{x/2}^{2x}
\int\limits_{|s-x|}^{ {M}} \phi(r^{-2})
 dr ds
\\
\le &  c_{6}x^{-1} \phi( x^{-2})^{1/2} \int\limits_{0}^{x}
 t  \tfrac{\phi(t^{-2})}{\phi(x^{-2})}  dt \\
 &\quad + c_5x^{-1}    \int\limits_{x/2}^{2x} \tfrac{\phi(|s-x|^{-2})^{1/2}}{\phi( x^{-2})^{1/2}}
\int\limits_{|s-x|}^{ {M}} \tfrac{\phi(r^{-2})^{1/2}}{\phi(|s-x|^{-2})^{1/2}}\phi(r^{-2})^{1/2}
 dr
 ds.
 \end{align*}
 Applying  Lemma \ref{p:USC} twice  with $\eps=\delta/2$ 
 to $\tfrac{\phi(t^{-2})}{\phi(x^{-2})}$
 and $\phi( x^{-2})^{1/2}$, we get  
\begin{align*}& x^{-1} \phi( x^{-2})^{1/2} \int\limits_{0}^{x}
 t  \tfrac{\phi(t^{-2})}{\phi(x^{-2})}  dt 
 \le 
  c_{7} x^{-1} x^{\delta-\eps-1} \int\limits_{0}^{x}
 t  \left(\tfrac{t}{x}\right)^{-2+2(\delta- \eps)}  dt \\
&=  c_{7}  x^{ -(\delta- \eps)}   \int\limits_{0}^{x}   t^{-1 +2(\delta- \eps)}    dt 
 \le c_{8}    x^{\delta- \eps}\le c_{8}    M^{\delta- \eps} < \infty. 
<\infty
 \end{align*}
 On the other hand, since $|s-x| \le 3x$ 
 for $s \le 2x$, 
  \eqref{e:Berall}, Lemma \ref{p:USC} with $\eps=\delta/2$ and {\bf (A-5)}
  imply 
 \begin{align*}
   & x^{-1}    \int\limits_{x/2}^{2x} \tfrac{\phi(|s-x|^{-2})^{1/2}}{\phi( x^{-2})^{1/2}}
\left(
\int\limits_{|s-x|}^{ {M}} \tfrac{\phi(r^{-2})^{1/2}}{\phi(|s-x|^{-2})^{1/2}}\phi(r^{-2})^{1/2}
 dr
\right) ds\\
&\le   c_9 x^{-1}    \int\limits_{x/2}^{2x}  \tfrac{x^{1-\delta+\eps} }{|s-x|^{1-\delta+\eps}}
\left(
\int\limits_{|s-x|}^{ {M}}  (\tfrac{|s-x|}{r})^{1-\delta_1} r^{-(1-\delta+\eps)}
dr
\right) ds\\
&=   c_9 x^{-\delta+\eps}    \int\limits_{x/2}^{2x}   |s-x|^{-\delta_1+\delta-\eps}
\left(
\int\limits_{|s-x|}^{ {M}}  r^{-2+\delta_1+\delta-\eps}
dr
\right) ds\\
&\le    c_{10} x^{-\delta+\eps}    \int\limits_0^{x}   t^{-\delta_1+\delta-\eps}
\left(
\int\limits_{t}^{ {M}}  r^{-2+\delta_1+\delta-\eps}
dr
\right) dt=:A\\
\end{align*}
If $2-\delta_1-\delta+\eps >1$, 
\begin{align*}
&A \le  c_{11} x^{-\delta+\eps}    \int\limits_0^{x}   
  t^{-1+2(\delta-\eps)}ds\le   
  c_{12} x^{\delta-\eps} \le  c_{12} M^{\delta-\eps} <\infty.
\end{align*}
If $2-\delta_1-\delta+\eps =1$,  
integration by parts yields
\begin{align*}
&A \le c_{13} x^{-\delta+\eps}    \int\limits_0^{x}   t^{-\delta_1+\delta-\eps}
\ln(M/t) dt \le 
c_{14} x^{-\delta+\eps}x^{1-\delta_1+\delta-\eps}
\ln(M/x)\\
& \le c_{14} \sup_{x \in [0, M/4]} x^{1-\delta_1}
\ln(M/x) 
 <\infty.
\end{align*}
If $2-\delta_1-\delta+\eps <1$, 
\begin{align*}
A \le  c_{10} x^{-\delta+\eps}    \int\limits_0^{x}   t^{-\delta_1+\delta-\eps}
\left(
\int\limits_{0}^{ {M}}   r^{-2+\delta_1+\delta-\eps}
dr
\right) dt\le   c_{15}      x^{1-\delta_1}\le   c_{15}      M^{1-\delta_1}
  <\infty.
\end{align*}
Thus
$
II   < \infty.
$

For $III$, we note that 
$s \ge s-x=|s-x|\geq
s/2$ for $s\geq 2x$.
Using this, \eqref{e:Berall},   \eqref{e:nn5}, Lemma \ref{p:USC}  with $\eps=\delta/2$
and {\bf (A-5)}, we get 
\begin{align*}
&III\le \int\limits_{2x}^{ M}
v(s/6) s \phi(4s^{-2})  ds+\int\limits_{2x}^{ M}
 v(s/6)
\int\limits_{s/2}^{ {M}} \phi(
r^{-2}) 
 dr
 ds\\
& \le c_{16} \int\limits_{2x}^{ M} \phi(s^{-2})^{1/2}  ds+ c_{16}\int\limits_{2x}^{ M} s^{-1} \int\limits_{s/2}^{ {M}}
\tfrac{\phi(r^{-2})^{1/2}}{\phi( s^{-2})^{1/2}}
{\phi(r^{-2})^{1/2}}
 drds\\
&\le   c_{17} \int\limits_{0}^{ M} 
s^{-1+(\delta-\eps)}ds +c_{17}\int\limits_{2x}^{ M} s^{-1} \int\limits_{s/2}^{ {M}}
(s/r)^{1-\delta_1}  \phi(r^{-2})^{1/2} drds.
\end{align*}
Clearly the first term is finite. Using Lemma \ref{p:USC} with $\eps=\delta/2$,  the second term is bounded by
\begin{align*}
B:= c_{18}\int\limits_{2x}^{ M} s^{-\delta_1}\int\limits_{s/2}^{ {M}} r^{-2+\delta_1+\delta-\eps}drds .
\end{align*}
Thus if $2-\delta_1-\delta+\eps >1$,
\begin{align*}
&B  \le 
c_{19} \int\limits_{2x}^{ M} s^{-(1-\delta+\eps)}ds\le 
c_{19} \int\limits_{0}^{ M} s^{-(1-\delta+\eps)}ds  <\infty.
\end{align*}
If $2-\delta_1-\delta+\eps =1$,  
using integration by parts we obtain
\begin{align*}
&B \le 
c_{20} \int\limits_{2x}^{ M} s^{-\delta_1} \ln (M/s)
ds
\le 
c_{21}  x^{1-\delta_1} \ln (M/x)\le 
c_{21}  \sup_{x \in [0, M/4] } x^{1-\delta_1} \ln (M/x)
 <\infty.
\end{align*}
Finally, If $2-\delta_1-\delta+\eps <1$, 
\begin{align*}
B \le \
 c_{18}\int\limits_{2x}^{ M} s^{-\delta_1}\int\limits_{0}^{ {M}} r^{-(2-\delta_1-\delta+\eps)}drds 
 \le 
c_{22} \int\limits_{2x}^{ M} s^{-\delta_1} ds\le c_{22} \int\limits_{0}^{ M} s^{-\delta_1} ds
 <\infty.
\end{align*}
Thus $III<\infty$ and so we have proved the lemma.
\qed

\begin{lemma}\label{L:Main} Assume additionally that  {\bf(A-5)} holds. 
Fix $Q \in \partial D$  and let 
\[
h(y)=
 \begin{cases}
  V(\delta_D(y)) & y\in 
  B(Q,R)\cap D\\
0 & \text{otherwise}\,.
 \end{cases}
\]
There exists
$
C_1=
C_1(\Lambda, R, \phi)>0$ independent of
the point $Q \in \partial D$ such that $\sA h$ is well defined in $D\cap B(Q,
\frac{R}{4})$ and
\bee\label{e:h3}
|\sA h(x)|\le
C_1 \quad \text{ for all } x \in D\cap B(Q, \tfrac{R}{4})\, .
\eee
\end{lemma}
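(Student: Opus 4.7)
The strategy is to compare $h$ pointwise with an explicit half-space function that is $\sA$-harmonic by Theorem \ref{c:Aw=0}. For $x \in D \cap B(Q, \tfrac{R}{4})$, the uniform interior ball condition together with $\delta_D(x) < R/4$ yields a unique $z_x \in \partial D$ with $|x-z_x| = \delta_D(x)$; set $\mathbf{n}_x := (x-z_x)/\delta_D(x)$ and $w_x(y) := V\bigl(((y-z_x) \cdot \mathbf{n}_x)^+\bigr)$. A rigid motion sending $z_x$ to the origin and $\mathbf{n}_x$ to $e_d$ carries $w_x$ to the function $w$ of Theorem \ref{c:Aw=0}, so $\sA w_x(x) = 0$, while $w_x(x) = V(\delta_D(x)) = h(x)$. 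It would therefore suffice to show
\[
 \int_{\R^d} \bigl|h(y) - w_x(y)\bigr|\,j(|y-x|)\,dy \;\le\; C_1(\Lambda, R, \phi),
\]
for then $\sA h(x) = \sA h(x) - \sA w_x(x) = \int (h - w_x)(y)\, j(|y-x|)\,dy$ absolutely.

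To justify removing the principal value I would Taylor-expand at $y=x$: the $C^{1,1}$ property gives $|\delta_D(y) - (y - z_x) \cdot \mathbf{n}_x| \le \kappa(\Lambda, R)|y - x|^2$ near $x$, and $V$ is $C^1$ with $V'= v$ locally bounded on a neighbourhood of $\delta_D(x) > 0$, so $|h(y) - w_x(y)| \le c|y-x|^2$ in such a neighbourhood. This dominates the $j$-singularity $|y-x|^{-d-2}\phi'(|y-x|^{-2})$ from Proposition \ref{prop:pot-alpha0} after invoking \eqref{e:Berall1}. I would then decompose the domain of integration into (a) the near region $\{y : |y - z_x| < R/2\}$, (b) the transition shell $\{y \in B(Q, R) : |y - z_x| \ge R/2\}$, and (c) the far region $\{y \notin B(Q, R)\}$. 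In (c) one has $h(y) = 0$, so only $w_x$ survives, and Proposition \ref{c:cforI} applied after the above rigid motion bounds the contribution uniformly. In (b) both $h$ and $w_x$ are bounded by a constant depending on $R$ and $\phi$, and $\int_{|y-x| > R/8} j(|y-x|)\,dy < \infty$ (as $j$ is a L\'evy density, cf.\ \eqref{eq:sub-11}), so this part is controlled as well.

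The main obstacle is region (a). Straightening the boundary via the $C^{1,1}$ chart $CS_{z_x}$ and invoking the uniform interior and exterior ball conditions one obtains
\[
 \bigl|\delta_D(y) - ((y - z_x) \cdot \mathbf{n}_x)^+\bigr| \;\le\; \tfrac{C}{R}\,|\widetilde{y - z_x}|^2,
\]
where $\widetilde{y-z_x}$ denotes the tangential component. Applying the mean value theorem to $V$ with its density bounded via \eqref{e:nn5} yields
\[
 |h(y) - w_x(y)| \;\le\; \tfrac{C}{R}\,|\widetilde{y - z_x}|^2 \cdot v\bigl(\tfrac{1}{6}\min\{\delta_D(y), ((y - z_x) \cdot \mathbf{n}_x)^+\}\bigr).
\]
Combining this with $j(|y-x|) \asymp |y-x|^{-d-2} \phi'(|y-x|^{-2})$, writing $y$ in coordinates $(\widetilde{y-z_x}, s)$ with $s = (y - z_x) \cdot \mathbf{n}_x$, and performing the tangential integration first (this is where {\bf (A-3)}--{\bf (A-5)} enter, to turn integrals of $r^{-d-2}\phi'(r^{-2})$ weighted by powers of $|\widetilde{y-z_x}|$ into factors $\phi(|s-\delta_D(x)|^{-2})|s-\delta_D(x)|$ and $\int_{|s-\delta_D(x)|}^{R}\phi(r^{-2})\,dr$), the estimate reduces to bounding
\[
 \int_{0}^{R} v(s/6)\left( \phi(|s - \delta_D(x)|^{-2})|s - \delta_D(x)| + \int_{|s - \delta_D(x)|}^{R} \phi(r^{-2})\,dr\right) ds
\]
uniformly in $\delta_D(x) \in [0, R/4]$. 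This is precisely the integral handled by Lemma \ref{l:new} with $M = R$, delivering $|\sA h(x)| \le C_1(\Lambda, R, \phi)$ independently of $Q \in \partial D$.
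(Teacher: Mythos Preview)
Your strategy---comparing $h$ with the half-space function $w_x$ for which $\sA w_x=0$ and then bounding $\int|h-w_x|\,j\,dy$---is exactly the paper's, and your treatment of the far and transition regions matches the paper's $I_1$. The gap is in the near region~(a). Your mean-value bound
\[
|h(y)-w_x(y)|\;\le\;\tfrac{C}{R}\,|\widetilde{y-z_x}|^{2}\,v\Bigl(\tfrac16\min\bigl\{\delta_D(y),\,((y-z_x)\cdot\mathbf n_x)^+\bigr\}\Bigr)
\]
is formally correct (since $v$ is decreasing), but it is useless wherever the minimum is near $0$, because in general $v(0^+)=\infty$ (e.g.\ $V(t)=t^{\alpha/2}$ for the stable case). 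Two sub-regions of~(a) are therefore \emph{not} covered by your reduction to the Lemma~\ref{l:new} integral. First, on the ``boundary strip'' where one of $\delta_D(y)$, $((y-z_x)\cdot\mathbf n_x)^+$ vanishes or is tiny (the paper's set $A$ and integral $I_2$), the paper abandons the MVT entirely and uses instead the crude bound $h(y)+w_x(y)\le 2V(2|\widetilde y|)$ together with the thinness of the strip ($m_{d-1}\le cr^d$); this step has no counterpart in your sketch. Second, your final expression $\int_0^R v(s/6)[\cdots]\,ds$ tacitly assumes that the minimum equals $s=(y-z_x)\cdot\mathbf n_x$. When instead $\delta_D(y)<s$ (the paper's $L_2$ case) the factor becomes $v(\delta_D(y)/6)$, and controlling that requires a \emph{lower} bound on $\delta_D(y)$ via the interior ball at $z_x$; the resulting integral has a different structure and is not absorbed into Lemma~\ref{l:new} but treated by a further $B_1,B_2$ split. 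Without these two pieces the integrand in region~(a) is not shown to be $L^1$, so the claimed bound $|\sA h(x)|\le C_1$ is not yet established.

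A minor additional point: Lemma~\ref{l:new} is stated only for $d\ge 2$ and $0<\delta\le\tfrac12$; when $\delta>\tfrac12$ the paper bounds $\widehat L_1$ directly via~\eqref{e:hjhk1}, and the case $d=1$ is handled by a separate, simpler argument. Your invocation of Lemma~\ref{l:new} as the single endpoint of the argument therefore also needs these case distinctions.
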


\pf
We first note that when $d=1$, the lemma follows from Proposition
\ref{c:cforI} and Theorem \ref{c:Aw=0} by following the same proof as the one in
\cite[Lemma 4.4]{KSV2}.

Assume now that $d\geq 2$. 
Fix $x\in D\cap B(Q,\frac{R}{4})$ and let $x_0 \in \partial D$ such that 
$\delta_D(x)=|x-x_0|$.

Denote by $\psi$ a $C^{1,1}$ function and by $CS=CS_{x_0}$ an orthonormal
coordinate system with $x_0$ chosen so that $x=(\wt 0,x_d)$ and
\[
	B(x_0,R)\cap D=\{y=(\wt y,y_d) \ \text{ in }\ CS\colon y\in 
B(0,R),\
y_d>\psi(\wt y)\}\,.
\]

We fix such $\psi$ and the coordinate system $CS$.

Define two auxiliary functions $\psi_1,\psi_2\colon B(\wt 0,R)\rightarrow \R$ by
\[
	\psi_1(\wt y)=R-\sqrt{R^2-|\wt y|^2}\ \ \text{ and }\ \ \psi_2(\wt
y)=-\left(R-\sqrt{R^2-|\wt y|^2}\right)\,.
\]

By the interior/exterior uniform ball conditions (with radius $R$) it follows
that
\begin{equation}\label{eq:intext_1}
\psi_2(\wt y)\leq \psi(\wt y)\leq \psi_1(\wt y)\ \ \text{ for any }\ \ y\in
D\cap B(x,\tfrac{R}{4})\,.
\end{equation}

Now we define a function $h_x(y)=V(\delta_{H^+}(y))$, where 
\[
	H^+=\{y=(\wt y,y_d)\ \text{ in }\ CS\colon y_d>0\}
\]
denote the half-space in $CS$. 

Since $\delta_{H^+}(y)=(y_d)^+$ in $CS$, we can use Theorem \ref{c:Aw=0} to
deduce that
\[
	\sA  h_{x}(y)=0, \quad  \forall y\in H^+.
\]

Now the idea is to show that $\sA (h-h_x)(x)$ is well defined and that there
exists a constant $C_1=C_1(\Lambda,R,\phi)>0$ so that
\begin{equation}\label{eq:est_epsduh}
\int\limits_{\{y\in D\cup H^+\colon |y-x|>\varepsilon\}}
|h(y)-h_x(y)|j(|y-x|)\,dy\leq C_1\ \ \text{ for any}\ \ \varepsilon>0\,.
\end{equation}

To do this we estimate the integral in (\ref{eq:est_epsduh}) by the sum of the
following three integrals:
\begin{align*}
	I_1 & = \int\limits_{B(x,\frac{R}{4})^c}(h(y)+h_x(y))j(|y-x|)\,dy\\
	I_2 & = \int\limits_{A}(h(y)+h_x(y))j(|y-x|)\,dy,\ \text{ where }\\ &
\qquad\qquad \qquad\qquad \qquad\qquad A:=\{y\in (D\cup H^+)\cap
B(x,\tfrac{R}{4})\colon \psi_2(\wt y)\leq y_d\leq \psi_1(\wt y)\}\\
	I_3 & = \int\limits_{E}|h(y)-h_x(y)|j(|y-x|)\,dy,\ \text{ where
}E:=\{y\in B(x,\tfrac{R}{4})\colon y_d>\psi_1(\wt y)\}\,
\end{align*}
and prove that $I_1+I_2+I_3\leq C_1$ .

To estimate $I_1$ note that, by definition of $h$, $h=0$ on $B(Q,R)^c$ which
gives
\[
	I_1\leq \sup_{\scriptsize\begin{array}{c}z\in
\R^d\\0<z_d<R\end{array}}\int\limits_{
B(z,\frac{R}{4})^c\cap H^+}V(y_d)j(|z-y|)\,
dy+c_1\int\limits_{B(0,\frac{R}{4})^c} j(|y|) dy<\infty.
\] 
Here we have used Proposition \ref{c:cforI} and the fact that the L\' evy
measure is a finite measure away from the origin.

Now we estimate $I_2$. 
Denoting by $m_{d-1}(dy)$ the surface measure, we obtain
\[
	I_2\leq \int\limits_0^\frac{R}{4}\int\limits_{|\wt
y|=r}\mathbf{1}_{A}(y)(h_x(y)+h(
y))j\left(\sqrt{r^2+|y_d-x_d|^2}\right)\,m_{d-1}
(dy)\,dr\,.
\]

Since $V$ is increasing and 
\[
	R-\sqrt{R^2-|\wt y|^2}\leq \tfrac{|\wt y|^2}{R}\leq |\wt y|,
\]
we can use (\ref{eq:intext_1}) to deduce
\[
	h_x(y)+h(y)\leq 2V\left(\psi_1(\wt y)-\psi_2(\wt y)\right)\leq 2V(2|\wt
y|)\,.
\]
Then, by the fact that $j$ decreases, Proposition \ref{e:behofV} and 
\eqref{prop:pot-alpha01}, we get
\begin{align*}
I_2 & \leq 2
\int\limits_0^\frac{R}{4} \int\limits_{|\wt
y|=r}  \mathbf{1}_A(y)V(2|\wt
y|)j(r)\,m_{d-1}(dy)\,dr \\
&\leq
c_2\int\limits_0^\frac{R}{4}r^{-d-2}\frac{\phi'(r^{-2})}{\sqrt{\phi(r^{-2})}}\,
m_{d-1}(\{y\in A\colon |\wt y|=r\})\,dr\,.
\end{align*}

Noting that $|\psi_2(\wt y)-\psi_1(\wt y)|\leq \frac{2|\wt
y|^2}{R}=\frac{2r^2}{R}$ for $|\wt y|=r$, we obtain
\[
	m_{d-1}(\{y\colon |\wt y|=r,\ \psi_2(\wt y) \le y_d
	 \le \psi_1(\wt y)\})\leq c_3
r^d\ \ \text{ for }\ \ r\leq \tfrac{R}{4}\,.
\]

Thus, by the previous observation and the integration by parts we get
\begin{align*}
	I_2&\leq
c_4\int\limits_0^
\frac{R}{4}r^{-2}\frac{\phi'(r^{-2})}{\sqrt{\phi(r^{-2})}}\,
dr\,=\,c_4\int\limits_0^\frac{R}{4} r \left(- \sqrt{\phi(r^{-2})} \right)' dr \\
	&\leq c_4\left[\lim_{r\downarrow 0}
r\sqrt{\phi(r^{-2})}+\int\limits_0^\frac{R}{4}\sqrt{\phi(r^{-2})}\,dr\right]\,.
\end{align*}

By Lemma \ref{p:USC} applied to a fixed  $\varepsilon<\delta$ we see that there
is a constant $c_5=c_5(\varepsilon)>0$ so that 
\[
	\phi(r^{-2})\leq c_5r^{-2(1-\delta+\varepsilon)},
\]
which gives
\[
	I_2\leq c_4\int\limits_0^\frac{R}{4}\sqrt{\phi(r^{-2})}\,dr\leq
c_4 \sqrt{c_5}\int\limits_{0}^\frac{R}{4}r^{-1+\delta-\varepsilon}\,dr<\infty\,.
\]

In order to estimate $I_3$, we consider two cases. First, 
if
$0 <y_d=\delta_{_{H^+}}({y}) \le  \delta_D({y})$,
\begin{align}
h(y)-h_{x}(y) \le V(y_d+R^{-1}|\wt y|^2) -V(y_d) =
\int\limits_{y_d}^{y_d+R^{-1}|\wt y|^2} v(z)dz \le R^{-1}|\wt y|^2
v(y_d),  \label{e:KG}
\end{align}
since $v$ is decreasing. 

If $y_d=\delta_{_{H^+}}({y}) >  \delta_D({y})$ and $y \in E$,
using the fact that
$\delta_D({y})$ is greater than or equal to the distance between $y$
and the graph of $\psi_1$ and
 \begin{align*}
y_d-R+\sqrt{ |\wt y|^2+(R-y_d)^2} &= \tfrac{|\wt y|^2}
{\sqrt{ |\wt y|^2+(R-y_d)^2} + (R-y_d)}\,\le\, \tfrac{ |\wt y|^2} {2 (R-y_d)}
\le
\tfrac{ |\wt y|^2} {R},
\end{align*}
we 
obtain
 \begin{align}
h_{x}(y)-h(y)
\le\int\limits^{y_d}_{R-\sqrt{ |\wt y|^2+(R-y_d)^2}} v(z)dz\le  R^{-1}  |\wt
y|^2
\,v\left(R-\sqrt{ |\wt y|^2+(R-y_d)^2}\right). \label{e:KG2}
\end{align}
By \eqref{e:KG} and \eqref{e:KG2},
\begin{align*}
I_3
\,\le\,& R^{-1}\int\limits_{E \cap \{  y: y_d \le \delta_D({y})   \}}
 |\wt y|^2 v(y_d)j(|x-y|) dy\\
 &+ R^{-1}\int\limits_{E \cap \{  y:y_d  > \delta_D({y})   \}}
|\wt y|^2  v\left(R-\sqrt{ |\wt y|^2+(R-y_d)^2}\right) j(|x-y|) dy\\
=:&R^{-1}( L_1+L_2).
\end{align*}

Since
\[
E\subset  \{z=(\wt z, z_d)\in \R^d: \ |\widetilde{z}|< \frac{R}{4}\wedge \sqrt{
2Rz_d-z_d^2}
\hbox{ and }  0< z_d\leq \frac{R}{2}\},
\]
changing to polar coordinates for $\wt y$ and using \eqref{e:Berall},
\eqref{e:Berall1},
 \eqref{prop:pot-alpha01} and 
Proposition \ref{e:behofV}, 
yields
\begin{eqnarray*}
L_1
&\leq & c_{6}\int\limits_{0}^{\frac{R}{2}}  v(y_d)\left(
\int\limits_0^{ \frac{R}{4}\wedge \sqrt{ 2Ry_d-y_d^2}} \frac{r^{d}  \phi'((r^2+
|y_d-
x_d|^2)^{-1}) }
{(r^2+ |y_d- x_d|^2)^{(d+2)/2}} dr
\right) dy_d\\
&\leq & c_{7}\int\limits_{0}^{\frac{R}{2}}  
v(y_d/6)
\left(
\int\limits_0^{ {R}} \frac{ r^{d} \phi'((r+
|y_d-
x_d|)^{-2}) }
{(r+ |y_d- x_d|)^{d+2}}
 dr
\right) dy_d=:c_7 \wh L_1
\end{eqnarray*}

If $\delta \not=\frac{1}{2}$, by {\bf(A-3)}
\begin{align}
& \int\limits_0^{ R} \frac{  \phi'((r+ |y_d- x_d|)^{-2}) }
{(r+ |y_d- x_d|)^2}
 dr \nn\\=& \phi'((R+ |y_d- x_d|)^{-2})  \int\limits_0^{ R}
\frac{\phi'((r+ |y_d- x_d|)^{-2}) }{\phi'((R+ |y_d-
x_d|)^{-2})}\frac{dr}{(r+ |y_d- x_d|)^2}
\nn \\
 \le& c_8  \phi'((R+ |y_d- x_d|)^{-2}) \int\limits_0^{ R}
\left(\frac{(r+ |y_d- x_d|)^{-2} }{(R+ |y_d-
x_d|)^{-2}}\right)^{-\delta}\frac{dr}{(r+ |y_d- x_d|)^2}
\nn \\
  =& c_8  \phi'((R+ |y_d- x_d|)^{-2})   (R+ |y_d-
x_d|)^{-2\delta} \int\limits_0^{ R} (r+ |y_d- x_d|)^{-2+2\delta}
 dr\nn\\
 \le& c_9  \phi'((R+ |y_d- x_d|)^{-2} )  (R+ |y_d-
x_d|)^{-2\delta} |y_d- x_d|^{-(1-2\delta)_+}  \label{e:hjhk1}
\end{align}

Thus, in the case  $\delta > \frac{1}{2}$, 
\eqref{e:hjhk1} implies
\begin{eqnarray}
\wh L_1
&\leq & c_{11}\int\limits_{0}^{\frac{R}{2}}  
v(y_d/6)  dy_d \le c_{12}
V(\tfrac{R}{12}) <
\infty. \label{new:234}
\end{eqnarray}

For  the case  $\delta \le \frac{1}{2}$, we first note that  using \eqref{e:Berall1} we obtain
\begin{eqnarray*}
&&\int\limits_0^{ {R}} \frac{ r^{d} \phi'((r+
|y_d-
x_d|)^{-2}) }
{(r+ |y_d- x_d|)^{d+2}}
 dr
 \\
 &\le& 
 \int\limits_0^{ |s-
x_d|} \frac{ r^{d} \phi((r+
|s-
x_d|)^{-2}) }
{(r+ |s- x_d|)^{d}}
 dr
+
\int\limits_{|s-
x_d|}^{ {R}} \frac{ r^{d} \phi((r+
|s-
x_d|)^{-2}) }
{(r+ |s- x_d|)^{d}}
 dr
\\
&\leq & \frac{ \phi(
|s-
x_d|^{-2}) }
{|s- x_d|^{d}}
\int\limits_0^{ |s-
x_d|} 
r^d dr
+
\int\limits_{|s-
x_d|}^{ {R}} \phi(r^{-2}) 
 dr
\\
&= & (d+1)^{-1}\phi(
|s-
x_d|^{-2}) 
|s- x_d| +
\int\limits_{|s-
x_d|}^{ {R}} \phi(r^{-2}) 
 dr
\end{eqnarray*}
Thus, by Lemma \ref{l:new}, 
\begin{align*}
\wh L_1  &\le c_{13}\int\limits_{0}^{\frac{R}{2}} 
 v(s/6)
\left(
\phi(
|s-
x_d|^{-2}) 
|s- x_d| +
\int\limits_{|s-
x_d|}^{ {R}} \phi(r^{-2}) 
 dr
\right) ds  < \infty.
\end{align*}

Let us estimate $L_2$. 
Switching to polar coordinates for $\wt y$,
and by the use of \eqref{prop:pot-alpha01},  
we get
\begin{align*}
&L_2\leq c_{20}
\int\limits_{0}^{x_d+\frac{R}{4}} \left(
\int\limits_0^{  \sqrt{ 2Ry_d-y_d^2}}
 v(R-\sqrt{ r^2+(R-y_d)^2})r^d
j((r^2+ |y_d- x_d|^2)^{1/2}) dr
\right) dy_d\\
&\leq  c_{21}\int\limits_{0}^{x_d+\frac{R}{4}}\left( \int\limits_0^{  \sqrt{
2Ry_d-y_d^2}}  \frac {   v(R-\sqrt{ r^2+(R-y_d)^2})
\phi'((r^2+ |y_d- x_d|^2)^{-1})}{ (r^2+ |y_d- x_d|^2)^{(d+2)/2}}
r^{d} dr
\right) dy_d\\
&\leq  c_{22}\int\limits_{0}^{x_d+\frac{R}{4}}  \left( \int\limits_0^{
\sqrt{
2Ry_d-y_d^2}} \frac {   v(R-\sqrt{ r^2+(R-y_d)^2})
\phi'((r+ |y_d- x_d|)^{-2}) }{ (r+ |y_d- x_d|)^{2+d}}
r^{d} dr
\right) dy_d  .
\end{align*}
Since, for $0<r<R$,
$$
{R-\sqrt{ r^2+(R-y_d)^2}}
=
\tfrac{ \left(\sqrt{2Ry_d-y_d^2}+r\right)  \left(\sqrt{2Ry_d-y_d^2}-r\right)    
    }{R+\sqrt{
r^2+(R-y_d)^2} }\,\ge\,
\tfrac{\sqrt{y_d}}{3 \sqrt R}  \left(\sqrt{2Ry_d-y_d^2}-r\right)
$$
and $\sqrt{
2Ry_d-y_d^2} <\sqrt{R/2}\sqrt{
2R-y_d}<R$ for $0<y_d <x_d+\frac{R}{4}$, 
we have
$$
L_2 \le c_{22}  \int\limits_{0}^{x_d+\frac{R}{4}}     \int\limits_0^{  \sqrt{
2Ry_d-y_d^2}}
\frac{v\left(\sqrt{y_d}  (\sqrt{2Ry_d-y_d^2}-r)/ (3 \sqrt R)\right)\phi'((r+
|y_d- x_d|)^{-2}) }{(r+ |y_d- x_d|)^{2+d}}r^d drdy_d\, .
$$

Using \eqref{e:Berall1},
we see that with $a:=  \sqrt{ 2Ry_d-y_d^2}  $ and $b:=  |y_d- x_d| $,
\begin{align*}
& \int\limits_0^{  a}  \frac{v(\sqrt{y_d}  (a-r)/(3 \sqrt R)) \phi'((r+
b)^{-2})}{(r+ b)^{2+d}}r^d dr\\
\le & \int\limits_0^{  a/2}  \frac{v(\sqrt{y_d}  (a-r)/(3 \sqrt R)) \phi'((r+
b)^{-2})}{(r+ b)^{2+d}}r^d dr+ 
\int\limits_{a/2}^{  a}  \frac{v(\sqrt{y_d}  (a-r)/(3 \sqrt R)) \phi((r+
b)^{-2})}{(r+ b)^{d}}r^d dr
\\
\le & v(\sqrt{y_d} a/(6 \sqrt R))  \int\limits_0^{ 
a/2}\frac{\phi'((r+
b)^{-2}) }{(r+ b)^{2+d}}r^d dr+\phi((
b+a/2)^{-2}) \int\limits_{
a/2}^{
a}
   v(\sqrt{y_d}  (a-r)/(3 \sqrt R))dr    \\
\le&v(\sqrt{y_d}  a/(6 \sqrt R))  \int\limits_0^{ R}\frac{\phi'((r+
b)^{-2})}{(r+ b)^{2+d}}r^ddr+c_{23}\phi((
b+a/2)^{-2})  \frac1{\sqrt{y_d}} 
V(\sqrt{y_d}  a/(6 \sqrt R))\\
:=&B_1(y_d)+ B_2(y_d) .\end{align*}

First, note that $\sqrt{y_dR}<   \sqrt{ 2Ry_d-y_d^2} =a \le \sqrt{ y_d}\sqrt{2 R}$. Thus 
\begin{align*}
&\int\limits_{0}^{\frac{R}{2}} B_1(y_d)dy_d \le 
c_{24}\int\limits_{0}^{\frac{R}{2}}
v(y_d/6) \int\limits_0^{R}\frac{\phi'((r+ |y_d- x_d|)^{-2})}{(r+ |y_d-
x_d|)^{2+d}} r^ddr 
dy_d =c_{24}\wh L_1 <\infty. \end{align*}

Using the inequality
 $y_d/\sqrt{R} \le \sqrt{ y_d}\le  |y_d- x_d|+ \sqrt{ y_d}$, we have
 $$\phi((|y_d- x_d|+\sqrt{ y_d})^{-2})
\le \phi((y_d/\sqrt{R})^{-2})^{1/2} \phi(y_d^{-1})^{1/2}.$$
This and  the inequality
$\sqrt{y_dR}< a \le \sqrt{ y_d}\sqrt{2 R}$, by  \eqref{e:Berall} and Proposition \ref{e:behofV}, then by Lemma \ref{p:USC} with $\eps=\delta/2$ we have 
\begin{align*}
B_2(y_d) 
&\le c_{25} y_d^{-1/2}\phi((
|y_d- x_d|+\sqrt{ y_d})^{-2})
\phi({y_d}^{-2})^{-1/2}\\
&\le  c_{25} y_d^{-1/2}\frac{\phi((
y_d/\sqrt{R})^{-2})^{1/2}}
{\phi({y_d}^{-2})^{1/2}}
\phi(y_d^{-1})^{1/2}
\\
&\le  c_{26} y_d^{-1/2}
\phi(y_d^{-1})^{1/2}  
\le c_{27} y_d^{-1/2}
y_d^{(-1+\delta-\eps)/2} = c_{27}  y_d^{-1+(\delta-\eps)/2}.
\end{align*}
Thus 
$$
\int\limits_{0}^{\frac{R}{2}} B_2(y_d)dy_d   \le 
c_{27}
   \int\limits_{0}^{\frac{R}{2}}   
y_d^{-1+(\delta-\eps)/2} dy_d < \infty.
$$
Therefore $L_2 < \infty.$

Now we see that  $\sA (h-h_x)(x)$ is well defined. Indeed, since $h_x(x)=h(x)$ and 
\begin{align*}
&{\bf 1}_{\{y \in D \cup H^+: \, |y-x|>\varepsilon\}}{|h(y)-h_{x}(y)|}j(|y-x|)
\\
&\le {\bf 1}_{A \cup B(x,\frac{R}{4})^c} (h(y)+h_{x}(y))
j(|y-x|)+{\bf 1}_{E}{|h(y)-h_{x}(y)|}j(|y-x|)  \in L^1(\R^d),\end{align*}
we can use the dominated convergence theorem to deduce that limit
\[
	\lim_{\varepsilon\downarrow 0}\int\limits_{\{y\in D\cup H^+\colon
|y-x|>\varepsilon\}}(h(y)-h_x(y))j(|y-x|)\,dy
\]
exists. Moreover, $\sA h(x)$ is then also well defined and satisfies $|\sA
h(x)|\leq C_1$ .
\qed

For $a, b>0$, we define
$D_Q( a, b) :=\left\{ y\in D: a >\rho_Q(y) >0,\, |\wt y | < b
\right\}$.

\begin{lemma}\label{L:2}
Assume additionally that  {\bf(A-5)} holds. 
There are constants
$R_1=R_1( R, \Lambda, \phi)\in (0, \frac{R}{16\sqrt{1+(1+ \Lambda)^2}})$ and
$c_i=c_i(R,  \Lambda, \phi)>0$,
$i=1, 2$,
such
that for every $r \le R_1$, $Q \in
\partial D$ and $x \in D_Q ( r, r)$,
 \bee\label{e:L:2}
\P_{x}\left(X_{ \tau_{ D_Q ( r, r)}} \in  D\right) \ge c_1 V( \delta_D (x))
 \eee
and
 \bee\label{e:L:3}
\E_x\left[ \tau_{ D_Q  ( r, r )}\right]\,\le\, 
c_2 V(\delta_D (x)).
 \eee
\end{lemma}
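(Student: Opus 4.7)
The plan is to apply Dynkin's formula to the function $h$ from Lemma~\ref{L:Main}, which satisfies $|\sA h|\le C_1$ on $D\cap B(Q,R/4)$, equals $V(\delta_D(\cdot))$ on $B(Q,R)\cap D$, and vanishes elsewhere. First I would choose $R_1=R_1(R,\Lambda)$ small enough so that $D_Q(r,r)\subset D\cap B(Q,R/4)$ for every $r\le R_1$; the stated constraint $R_1\le R/(16\sqrt{1+(1+\Lambda)^2})$ suffices, because any $y\in D_Q(r,r)$ satisfies $|\wt y|<r$ and $|y_d|\le (1+\Lambda)r$ via $\psi(0)=0$ and $\|\nabla\psi\|_\infty\le \Lambda$, so $|y-Q|\le r\sqrt{1+(1+\Lambda)^2}$. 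Writing $\tau:=\tau_{D_Q(r,r)}$, Dynkin's formula together with Lemma~\ref{L:Main} gives
$$|\E_x[h(X_\tau)]-h(x)|\,\le\, C_1\,\E_x[\tau]. \qquad(\ast)$$

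For \eqref{e:L:2}, I would use $h\le V(R)$ on $B(Q,R)\cap D$ and $h\equiv 0$ outside, so $\E_x[h(X_\tau)]\le V(R)\P_x(X_\tau\in D)$. Inserting into $(\ast)$ yields
$$V(\delta_D(x))\,=\,h(x)\,\le\, V(R)\,\P_x(X_\tau\in D)+C_1\,\E_x[\tau].$$
Provided \eqref{e:L:3} holds with a constant $c_2$ that is small (which can be arranged by shrinking $R_1$), the second term on the right is absorbed into the left and \eqref{e:L:2} follows with, e.g., $c_1=(2V(R))^{-1}$.

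For \eqref{e:L:3}, I would split into two regimes. When $\delta_D(x)\ge r/2$, Lemma~\ref{l:tau} applied to a ball $B(Q,c(\Lambda)r)\supset D_Q(r,r)$ already gives the crude bound $\E_x[\tau]\le c\,V(r)^2\asymp V(r)V(\delta_D(x))\le V(R_1)V(\delta_D(x))$. When $\delta_D(x)<r/2$, Lemma~\ref{l:tau} is insensitive to the location of $x$ and the bound $V(r)^2$ is too weak; here I would exploit the uniform exterior ball condition to place a ball $B(y_\star,R)\subset D^c$ at distance $\asymp \delta_D(x)$ from $x$, then combine Proposition~\ref{p:exittime} for the inner ball $B(x,\delta_D(x)/2)\subset D$ with the Ikeda--Watanabe formula~\eqref{eq:sub-105} and the Poisson-kernel lower bound~\eqref{P2} to show that $X$ is deflected into $B(y_\star,R)\subset D^c$ at a rate forcing $\E_x[\tau_{B(x,\delta_D(x)/2)}]\le c\,V(\delta_D(x))^2$. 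An iteration via the strong Markov property then gives $\E_x[\tau]\le c\,V(\delta_D(x))^2\le V(R_1)V(\delta_D(x))$, and $c_2$ can be shrunk by shrinking $R_1$.

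The main obstacle is the boundary regime $\delta_D(x)\ll r$ in the proof of \eqref{e:L:3}: Lemma~\ref{l:tau} only provides the crude bound $V(r)^2$, insensitive to the position of $x$, while the sought bound $V(\delta_D(x))$ is genuinely $C^{1,1}$-geometric. Overcoming this requires exploiting the uniform exterior ball condition and the explicit Poisson-kernel estimates from Section~\ref{sec:2} to quantify how strongly the process is pulled out of $D$ near the boundary. Once \eqref{e:L:3} is in hand, \eqref{e:L:2} follows cleanly from $(\ast)$ as described above.
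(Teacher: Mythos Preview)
Your Dynkin inequality $(\ast)$ is correct and is exactly the paper's starting point. Your reduction of \eqref{e:L:2} to \eqref{e:L:3} with a small constant is also fine in spirit. The genuine gap is in your argument for \eqref{e:L:3} in the boundary regime $\delta_D(x)<r/2$.

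The iteration you sketch does not go through. From the inner ball $B:=B(x,\delta_D(x)/2)$ you can indeed show, via the exterior ball and \eqref{P2}, that $\P_x(X_{\tau_B}\in D^c)\ge p>0$; but on the complementary event $\{X_{\tau_B}\in D_Q(r,r)\}$ you have \emph{no} control over $\delta_D(X_{\tau_B})$. The process may well land at distance comparable to $r$ from $\partial D$, after which the next exit-time contribution is of order $V(r)^2$, not $V(\delta_D(x))^2$. In fact the bound $\E_x[\tau]\le cV(\delta_D(x))^2$ you claim is too strong to be true: already for the rotationally symmetric $\alpha$-stable process one has $\E_x[\tau_{D_Q(r,r)}]\asymp \delta_D(x)^{\alpha/2}r^{\alpha/2}=V(\delta_D(x))\,V(r)$, which is much larger than $V(\delta_D(x))^2$ when $\delta_D(x)\ll r$. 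So the iteration cannot close.

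The paper avoids this difficulty by not attempting to bound $\E_x[\tau]$ geometrically at all. Instead it proves, via the Ikeda--Watanabe formula together with the comparison $j(|z-y|)\ge c\,j(|y|)$ on $D_Q(r,r)\times\big(D\cap B(Q,R)\setminus D_Q(r,r)\big)$ and the explicit asymptotics \eqref{prop:pot-alpha01} and Proposition~\ref{e:behofV}, the two lower bounds
\[
\E_x[h(X_\tau)]\ \ge\ c_*\Big(\sqrt{\phi(16\lambda^2R^{-2})}-\sqrt{\phi(R^{-2})}\Big)\,\E_x[\tau],
\qquad
\P_x(X_\tau\in D)\ \ge\ c_{**}\Big(\phi(16\lambda^2R^{-2})-\phi(R^{-2})\Big)\,\E_x[\tau],
\]
where $r=\lambda^{-1}A$. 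Choosing $\lambda$ (hence $R_1$) large enough that the first coefficient exceeds $2C_1$, one combines with the upper half of $(\ast)$ to get $V(\delta_D(x))=h(x)\ge \E_x[h(X_\tau)]-C_1\E_x[\tau]\ge C_1\E_x[\tau]$, which is \eqref{e:L:3}. For \eqref{e:L:2} one plugs the second display into the lower half of $(\ast)$, so that the term $C_1\E_x[\tau]$ is itself dominated by a constant multiple of $\P_x(X_\tau\in D)$. The key missing idea in your proposal is precisely this: rather than upper-bounding $\E_x[\tau]$, one \emph{lower-bounds} $\E_x[h(X_\tau)]$ by an arbitrarily large multiple of $\E_x[\tau]$ (by shrinking $R_1$), and then $(\ast)$ does the rest.
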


\pf
Without loss of generality we may assume that $Q=0$ and that $\psi\colon
\R^{d-1}\rightarrow \R$ is a $C^{1,1}$ function such that in the coordinate
system $CS_0$
\[
 B(0,R)\cap D=\{(\wt y,y_d)\in B(0,R) \text{ in }\ CS_0\colon y_d>\psi(\wt
y)\}\,.
\]

The function $\rho$ defined by  $\rho(y)=y_d-\psi(\wt y)$ 
satisfies
\begin{equation}\label{eq:r1}
 \frac{\rho(y)}{\sqrt{1+\Lambda^2}}\leq \delta_D(y)\leq \rho(y) \ \ \text{ for
all }\ \ y\in B(0,R)\cap D. 
\end{equation}

Define for $a>0$, 
\[
 D_a=\{y\in D\colon 0<\rho(y)<a,|\wt y|<a\}\,
\]
and 
the function 
\[
h(y)=
 \begin{cases}
  V(\delta_D(y)) & y\in 
  B(0, R)\cap D\\
0 & \text{otherwise}\,.
 \end{cases}
\]

Using Dynkin formula and 
the same approximation argument as in the proof of the Lemma 4.5 in \cite{KSV2}, 
from our Lemma \ref{L:Main} we
have the following estimate for any open set $U\subset B(0,
\tfrac{R}{4})\cap D$:
\begin{equation}\label{eq:rest1}
 h(x)-C_1\E_x\tau_U\leq \E_x h(X_{\tau_U})\leq h(x)+C_1\E_x\tau_U
\end{equation}
where $C_1>0$ is the constant from Lemma \ref{L:Main}. 

By choosing $A:=\frac{R}{4\sqrt{1+(1+\Lambda)^2}}$ we obtain
\[
 D_r\subset D_A\subset D(0,\tfrac{R}{4})\cap D\ \ \text{ for all } \ r\leq A\,.
\]

Indeed, for 
$y\in D_r$ and $r>0$ the following is true
\begin{equation}\label{eq:rest2}
 |y|^2=|\wt y|^2+|y_d|^2\leq r^2+(|y_d-\psi(\wt y)|+|\psi(\wt y)|)^2\leq
(1+(1+\Lambda)^2)r^2\,.
\end{equation}
In particular, for $r\leq A$
\[
  |y|\leq  \sqrt{1+(1+\Lambda)^2}A
 =\tfrac{R}{4}\,.
\]

The idea is to choose $
\lambda_2\geq 1$ large enough so that (\ref{e:L:2}) and
(\ref{e:L:3}) hold for $r\leq 
\lambda_2^{-1}A$ and $x\in D_r$\,.

We are going to show that there are constants $c_1,
c_2>0$ such that for any
$\lambda\geq 4$ and $x\in D_{\lambda^{-1}A}$ the following two inequalities
hold:
\begin{align}
 \E_x[h(X_{\tau_{D_{\lambda^{-1}A}}})]&\geq
c_1\left(\sqrt{\phi(16\lambda^2R^{-2})}-\sqrt{\phi(R^{-2})}\right)\E_x\tau_{D_{
\lambda^{-1}A}}\label{eq:ml1}\\
\P_x\left(X_{\tau_{D_{\lambda^{-1}A}}}\in D\right)&\geq
c_2\left(\phi(16\lambda^2R^{-2})-\phi(R^{-2})\right)\E_x\tau_{D_{
\lambda^{-1}A}}\label{eq:ml2}
\end{align}

Once we prove this, we can choose $
\lambda_2>4$ so that 
\[
 \sqrt{\phi(16
\lambda_2^2 R^{-2})}>\sqrt{\phi(R^{-2})}+\tfrac{2C_1}{c_1}\,.
\]

Then, for any $\lambda\geq 
\lambda_2$ and $x\in D_{\lambda^{-1}A}$ we can use 
\[
 c_1\left(\sqrt{\phi(16\lambda^2R^{-2})}-\sqrt{\phi(R^{-2})}\right)-C_1>C_1
\]
on \eqref{eq:ml1} and (\ref{eq:rest1}) to get
\[
 V(\delta_D(x))=h(x)\geq \E_x[h(X_{\tau_{D_{\lambda^{-1}A}}})]-C_1\E_x\tau_{D_{
\lambda^{-1}A}}\geq C_1 \E_x\tau_{D_{
\lambda^{-1}A}},
\]
which proves (\ref{e:L:3}) with $R_1=
\lambda_2^{-1}A$ .

Similarly, 
by (\ref{eq:rest1}) and (\ref{eq:ml2}), for any $\lambda\geq 
\lambda_2$ and $x\in D_{\lambda^{-1}A}$ we have 
\begin{align*}
 &V(\delta_D(x))\,=
 \,h(x)\,\leq\, \E_x[h(X_{\tau_{D_{\lambda^{-1}A}}})]+C_1\E_x\tau_{D_{
\lambda^{-1}A}}\\
&\leq V(R)\P_x\left(X_{\tau_{D_{\lambda^{-1}A}}}\in D\right)+
C_1c_2^{-1} \left(\phi(16
\lambda_2^2R^{-2})-\phi(R^{-2})\right)^{-1}\P_x\left(X_{\tau_{D_{\lambda^{-1}A}}}\in D\right),
\end{align*}
where the first term is obtained by estimating $h$ by $V (R)$ and noting that $h(x) = 0$ unless $x \in D$.
This  yields
\[
 \P_x\left(X_{\tau_{D_{\lambda^{-1}A}}}\in D\right)\geq
\frac{V(\delta_D(x))}{V(R)+
C_1c_2^{-1}\left(\phi(16
\lambda_2^2R^{-2})-\phi(R^{-2})\right)^{-1} }\,.
\]
This proves (\ref{e:L:2}) with $R_1=
\lambda_2^{-1}A$ .

Now we prove (\ref{eq:ml1}). Note that for $z\in D_{\lambda^{-1}A}$ and
$y\not\in B(0,\lambda^{-1}\tfrac{R}{r})$, 
\begin{align}\label{newineq}
 |z|\leq \sqrt{1+(1+\lambda^2)}\lambda^{-1}A=\lambda^{-1}\tfrac{R}{4}\leq |y|
\end{align}
implies
\[
 j(|z-y|)\geq j(2|y|)\geq c_3j(|y|)\,.
\]
Then the Ikeda-Watanabe formula implies
\begin{align*}
 \E_x[h(X_{\tau_{D_{\lambda^{-1}A}}})]&\geq\int\limits_{B(0,r)\cap D\setminus
D_{\lambda^{-1}A}}\int\limits_{D_{\lambda^{-1}A}}G_{D_{\lambda^{-1}A}}(x,
z)j(|z-y|)V(\delta_D(y))\,dz\,dy\\
&\geq c_3 \left(\int\limits_{D_{\lambda^{-1}A}}G_{D_{\lambda^{-1}A}}(x,
z)\,dz\right)\int\limits_{B(0,R)\cap D\setminus
D_{\lambda^{-1}A}}V(\delta_D(y))j(|y|)\,dy\\
&\geq c_3\E_x\tau_{D_{\lambda^{-1}A}}\int\limits_{B(0,R)\cap D\setminus
D_{\lambda^{-1}A}}j(|y|)V\left(\tfrac{y_d-\psi(\wt
y)}{\sqrt{1+
\Lambda^2}}\right)\,dy,
\end{align*}
since $\frac{y_d-\psi(\wt
y)}{\sqrt{1+
\Lambda^2}}\leq \delta_D(y)$ by (\ref{eq:r1})\,.

On the set $E:=\{(\wt y,y_d)\colon 2\Lambda|\wt y|<y_d,\,
\lambda^{-1}\tfrac{R}{4}<|y|<R\}$ we have
\[
 |y|\leq \sqrt{1+4\Lambda^2}\,y_d\ \ \text{ and }\ \ y_d-\psi(\wt y)\geq
y_d-\Lambda|\wt y|\geq \tfrac{|y|}{2\sqrt{1+4\Lambda^2}}\,.
\]
Since $E\subset B(0,R)\setminus D_{\lambda^{-1}A}$
because of the first inequality in \eqref{newineq}, 
changing to polar coordinates
gives
\[
 \E_x[h(X_{\tau_{D_{\lambda^{-1}A}}})]\geq
c_4\E_x[\tau_{D_{\lambda^{-1}A}}]\int\limits_{\lambda^{-1}\frac{R}{4}}
^Rj(r)V(\tfrac{r} 
{2\sqrt{1+4\Lambda^2}\sqrt{1+\Lambda^2}}
)r^{d-1}\,dr
\]
with constant $c_4>0$ depending on $\Lambda$ and $d$. 

Then \eqref{prop:pot-alpha01} and Proposition \ref{e:behofV} imply
\begin{align*}
 \E_x[h(X_{\tau_{D_{\lambda^{-1}A}}})]&\geq
c_5\E_x[\tau_{D_{\lambda^{-1}A}}]\int\limits_{\lambda^{-1}\frac{R}{4}}^Rr^{-3}
\tfrac{
\phi'(r^{-2})} {\sqrt{\phi(r^{-2})}}\,dr\\
&=c_5\E_x[\tau_{D_{\lambda^{-1}A}}]\left(\sqrt{\phi(16\lambda^2R^{-2})}-\sqrt{
\phi(R^{-2})}\right)\,.
\end{align*}

We prove (\ref{eq:ml2}) similarly by the same computation as above without $V$:
\begin{align*}
\P_x\left(X_{\tau_{D_{\lambda^{-1}A}}}\in
D\right)&\geq\P_x\left(X_{\tau_{D_{\lambda^{-1}A}}}\in B(0,R)\cap
D\setminus B(0,\lambda^{-1}\tfrac{R}{4})\right)\\
&\geq c_6\E_x[\tau_{D_{\lambda^{-1}A}}]\int\limits_{\lambda^{-1}\frac{R}{4}}
^Rj(r) r^{d-1}\,dr\\
&\geq c_7\E_x[\tau_{D_{\lambda^{-1}A}}]\int\limits_{\lambda^{-1}\frac{R}{4}}^R
r^{-3}\phi'(r^{-2})\,dr\\
&=2^{-1}c_7\E_x[\tau_{D_{\lambda^{-1}A}}]\left(\phi(16\lambda^2R^{-2})-\phi(R^{-2}
)\right)\,.
\end{align*}
\qed

\section{Analysis of Poisson Kernel}\label{sec:pk}

In this section we always assume that 
 the Laplace exponent $\phi$ of the  subordinator $S=(S_t\colon t\geq 0)$
satisfies {\bf(A-1)}--{\bf(A-4)} 
and the corresponding subordinate Brownian motion $X=(X_t,\P_x)$ is  transient.

First we record an inequality.  
\begin{lemma}\label{l:l}
For every $R_0>0$, there exists a constant 
$c(R_0, \phi)>0$ such that
 \begin{equation}\label{e:fsagd}
\lambda^2 \int\limits_0^{\lambda^{-1}} r^{-1} \phi' (r^{-2})dr +
 \int\limits_{\lambda^{-1}}^{R_0}r^{-3} \phi' (r^{-2})dr \,\le\, c\,
\phi(\lambda^2), \quad \forall \lambda \ge \tfrac{1}{R_0}.
\end{equation}
\end{lemma}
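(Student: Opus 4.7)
The plan is to perform the substitution $u = r^{-2}$ in both integrals so that each becomes an integral in the natural variable of $\phi$, then exploit the elementary identity
$\int_{\lambda^2}^M \phi'(u)\,du = \phi(M)-\phi(\lambda^2)$ and the scaling control in \textbf{(A-3)}.

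For the second integral, the substitution $u=r^{-2}$, $du=-2r^{-3}\,dr$ gives
\[
\int_{\lambda^{-1}}^{R_0} r^{-3}\phi'(r^{-2})\,dr \;=\; \tfrac{1}{2}\bigl(\phi(\lambda^2)-\phi(R_0^{-2})\bigr)\;\le\;\tfrac{1}{2}\phi(\lambda^2),
\]
which handles this half outright with no recourse to any growth assumption on $\phi$.

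For the first integral, the same substitution (noting $r^{-1}\,dr = -\tfrac{1}{2} u^{-1}\,du$) reduces it to
\[
\lambda^2\int_0^{\lambda^{-1}} r^{-1}\phi'(r^{-2})\,dr \;=\; \tfrac{\lambda^2}{2}\int_{\lambda^2}^{\infty} u^{-1}\phi'(u)\,du.
\]
Assuming first that $\lambda^2\ge\lambda_0$, I would rescale by $u=\lambda^2 x$, yielding $\int_1^\infty x^{-1}\phi'(\lambda^2 x)\,dx$, and then apply \textbf{(A-3)} to dominate $\phi'(\lambda^2 x) \le \sigma\, x^{-\delta}\phi'(\lambda^2)$. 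The $x$-integral $\int_1^\infty x^{-1-\delta}\,dx$ is finite because $\delta>0$, so the whole expression is bounded by $c\,\lambda^2\phi'(\lambda^2)$. Finally, \eqref{e:Berall1} gives $\lambda^2\phi'(\lambda^2)\le\phi(\lambda^2)$, closing the estimate.

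The only remaining case is $1/R_0 \le \lambda < \sqrt{\lambda_0}$ (which is nonempty only if $\lambda_0 > R_0^{-2}$). Here $\lambda$ ranges over a compact interval bounded away from $0$, the left-hand side of \eqref{e:fsagd} is a continuous function of $\lambda$ (convergence at $0$ of the first integrand follows from the large-$u$ decay just established applied once at $\lambda^2=\lambda_0$), and $\phi(\lambda^2)\ge\phi(R_0^{-2})>0$, so the ratio is bounded by a constant depending only on $R_0$ and $\phi$. The main obstacle is really just bookkeeping: verifying the convergence at $r=0$ (equivalently, at $u=\infty$), which is precisely where \textbf{(A-3)} is invoked, and checking that the constant produced depends only on $R_0$, $\sigma$, $\delta$, $\lambda_0$, all of which are packaged into $\phi$.
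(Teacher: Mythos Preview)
Your proof is correct and follows essentially the same route as the paper's: both treat the second integral via the exact identity $\int_{\lambda^{-1}}^{R_0} r^{-3}\phi'(r^{-2})\,dr=\tfrac12(\phi(\lambda^2)-\phi(R_0^{-2}))$, bound the first integral using {\bf(A-3)} to produce $c\,\lambda^2\phi'(\lambda^2)$ and then invoke \eqref{e:Berall1}, and finally absorb the residual compact range of $\lambda$ by a direct bound. The only cosmetic difference is that you perform the substitution $u=r^{-2}$ before applying {\bf(A-3)}, whereas the paper applies {\bf(A-3)} in the $r$ variable and then integrates $r^{-1+2\delta}$; the two computations are equivalent.
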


\proof 
Assume $ \lambda \ge \lambda_0 \vee \tfrac{1}{R_0}$. 
By \eqref{eq:as-1}, $ \phi'(r^{-2})
\le
c_1 r^{2\delta}\lambda^{2\delta}  \phi'(\lambda^{2}) $ for $r \le \lambda^{-1}$.
Thus
\begin{align*}
&\lambda^2 \int\limits_0^{\lambda^{-1}} r^{-1} \phi' (r^{-2})dr +
\int\limits_{\lambda^{-1}}^{R_0} r^{-3} \phi' (r^{-2})dr\\
&=\lambda^2 \phi' (\lambda^2)\int\limits_0^{\lambda^{-1}} r^{-1} \frac{\phi'
(r^{-2})}{\phi' (\lambda^2)} dr - 
\frac{1}{2}
\int\limits_{\lambda^{-1}}^{R_0}  (\phi (r^{-2}))'dr\\
&\le  c_2 \phi' (\lambda^2)\lambda^{2+2\delta} 
\int\limits_0^{\lambda^{-1}} r^{-1+2\delta}   dr +c_2\phi(\lambda^{2}) \,
\le\,  c_3 (\phi' (\lambda^2)\lambda^{2}+\phi(\lambda^{2}) ) \le 2 c_3
\phi(\lambda^{2})
\end{align*}
where we have used \eqref{e:Berall1} in the last inequality. 

If $\tfrac{1}{R_0} > \lambda_0$ and $\tfrac{1}{R_0} \le \lambda \le  \lambda_0$, then clearly 
the left hand side of \eqref{e:fsagd} is bounded above by 
\begin{align*}
&\lambda_0^2 \int\limits_0^{R_0} r^{-1} \phi' (r^{-2})dr +
 \int\limits_{\lambda_0^{-1}}^{R_0}r^{-3} \phi' (r^{-2})dr
= c_4
 \le c_5 \phi(\lambda^2).
\end{align*}
\qed

Recall that the infinitesimal generator $\sL$ of $X$
is given by
\begin{equation}\label{3.1}
\sL f(x)=\int\limits_{\R^d}\left( f(x+y)-f(x)-y\cdot \nabla f(x)
{\bf 1}_{\{|y|\le
\eps \}}
 \right)\, j(|y|)dy
\end{equation}
for every $\eps>0$ and $f\in C_b^2(\R^d)$ where 
%$C^2_b$
$C^2_b(\R^d)$ is the collection of bounded $C^2$
functions in
$\RR^d$.

Using Lemma \ref{l:l}, we now prove \cite[Lemma 4.2]{KSV4} under a weaker
assumption. 

\begin{lemma}\label{l:lnew}
There exists a constant 
$c=c(\phi)>0$
 such that for every $f\in
C^2_b(\R^d)$ with $0\leq f \leq 1$,
$$
|\sL f_r(x)| \le   c \phi(r^{-2}) \left( 
1+ \sup_{y}\sum_{j,k}
\left|\tfrac{\partial^2f}{\partial
y_j\partial y_k} (y)\right| \right) + b_0, \quad \text{for every } x \in \R^d\
\text{ and }\ 
r \in (0,1],
$$
where $f_r(y):=f(\frac{y}{r})$ and $b_0:=2\int\limits_{|z| > 1} j(|z|)dz <
\infty$.
\end{lemma}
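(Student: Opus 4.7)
The plan is to split the generator's integral by spatial scale. With $\eps=r$, write
\begin{align*}
\sL f_r(x) &= \int\limits_{|y|\le r}\bigl(f_r(x+y)-f_r(x)-y\cdot\nabla f_r(x)\bigr)\,j(|y|)\,dy \\
&\quad + \int\limits_{r<|y|\le 1}\bigl(f_r(x+y)-f_r(x)\bigr)\,j(|y|)\,dy + \int\limits_{|y|>1}\bigl(f_r(x+y)-f_r(x)\bigr)\,j(|y|)\,dy,
\end{align*}
which is legitimate because the linear correction $y\cdot\nabla f_r(x)\mathbf{1}_{|y|\le\eps}$ can be modified arbitrarily, since $j$ is radial and the first-order term integrates to zero on any centered ball.

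For the first integral, by Taylor's theorem and the fact that $|\partial^2 f_r/\partial y_j\partial y_k|=r^{-2}|\partial^2 f/\partial y_j\partial y_k|\circ(\cdot/r)\le r^{-2}M$, where $M:=\sup_y\sum_{j,k}|\partial^2 f/\partial y_j\partial y_k(y)|$, the integrand is bounded by $\tfrac{M}{2r^2}|y|^2 j(|y|)$. Using Proposition \ref{prop:pot-alpha01} (which gives $j(s)\le c\,s^{-d-2}\phi'(s^{-2})$ for small $s$, with the remaining range handled by the decreasingness of $j$ and $\phi(r^{-2})\ge\phi(1)$ for $r\le1$), converting to spherical coordinates yields a bound of the form $c\,M\,r^{-2}\int_0^r s^{-1}\phi'(s^{-2})\,ds$. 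Lemma \ref{l:l} applied with $\lambda=r^{-1}$ and $R_0=1$ then majorizes this by $c\,M\,\phi(r^{-2})$.

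For the second integral, use $0\le f_r\le 1$ to bound $|f_r(x+y)-f_r(x)|\le 1$, reducing the question to estimating $\int_{r<|y|\le 1}j(|y|)\,dy$. Again by Proposition \ref{prop:pot-alpha01} and spherical coordinates this is controlled by $c\int_r^1 s^{-3}\phi'(s^{-2})\,ds$, and the second term of Lemma \ref{l:l} shows this is at most $c\,\phi(r^{-2})$. The third integral is similarly bounded by $\int_{|y|>1}j(|y|)\,dy=b_0/2\le b_0$, which is finite since $\int(1\wedge|y|^2)j(|y|)\,dy<\infty$ for the L\'evy density of $X$.

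The main (mild) obstacle is matching the asymptotic comparison $j(s)\asymp s^{-d-2}\phi'(s^{-2})$, which Proposition \ref{prop:pot-alpha01} only guarantees as $s\to 0+$, with the full range $r\in(0,1]$. The resolution is to fix $r_0>0$ beyond which $j$ is bounded and $\phi(r^{-2})$ is bounded away from $0$: for $r\le r_0$ everything uses the asymptotic directly, and for $r\in(r_0,1]$ the integrals are trivially finite and absorbed by enlarging the constant $c$. Combining the three pieces gives the claimed inequality.
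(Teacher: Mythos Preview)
Your argument is correct and follows essentially the same route as the paper's proof: split the integral at radii $r$ and $1$, use Taylor's theorem and the second-derivative bound on $|y|\le r$, use $0\le f\le 1$ on the complement, then invoke Proposition~\ref{prop:pot-alpha0} and Lemma~\ref{l:l} (equation~\eqref{e:fsagd}) to reduce both the $\int_0^r s^{-1}\phi'(s^{-2})\,ds$ and $\int_r^1 s^{-3}\phi'(s^{-2})\,ds$ integrals to $c\,\phi(r^{-2})$. The only difference is that you make explicit the range-matching caveat for the asymptotic of $j$, which the paper silently absorbs into the constant.
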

\proof
Set   $L_1=\sup_{y\in \R^d}\sum_{j,k} |\frac{\partial^2f(y)}{\partial
y_j\partial y_k}|$. Then \[|f(z+y)-f(z) -y\cdot \nabla f(z)|\le
\tfrac{1}{2}L_1 |y|^2 ,\] 
which implies the following estimate
\begin{eqnarray*}
|f_r(z+y)-f_r(z) -y\cdot \nabla f_r(z){\bf 1}_{\{|y|\le r\}}| \le
\tfrac{L_1}{2}
\tfrac{|y|^2}{r^2}{\bf 1}_{\{|y|\le r\}} + 2 \cdot {\bf 1}_{\{|y|\ge r\}}\ .
\end{eqnarray*}
Now,
 \eqref{prop:pot-alpha01} and \eqref{e:fsagd} yield
\begin{eqnarray*}
&&|\sL f_r(z)|\\
 &\le & \int\limits_{\R^d}
|f_r(z+y)-f_r(z) -y\cdot \nabla f_r(z){\bf 1}_{\{|y|\le r\}}| \, j(|y|)dy \\
& \le & \tfrac{L_1}2\int\limits_{\R^d} {\bf 1}_{\{|y|\le r\}}
\tfrac{|y|^2}{r^2} j(|y|)dy+2\int\limits_{\R^d}{\bf 1}_{\{r \le |y|\le 1\}} 
j(|y|)dy
+2\int\limits_{\R^d}{\bf 1}_{\{|y|\ge 1\}} j(|y|)dy\\
& \le &   c \phi(r^{-2}) \left( 2+\tfrac{L_1}2  \right) 
+ 2\int\limits_{\{|y|\ge 1\}}
j(|y|)dy \, ,
\end{eqnarray*}
where the constant $c$ is independent of $r\in (0,1]$.
\qed

\begin{lemma}\label{l2.1}
For every $a \in (0, 1)$, there exists a positive constant 
$c=c(a,\phi)>0$
such that
for any $r\in (0, 1)$ and any open set $D$ with $D\subset B(0, r)$ 
$$
{\P}_x\left(X_{\tau_D} \in B(0, r)^c\right) \,\le\, c\,
\phi(r^{-2})\,
\E_x[\tau_D] \ \ \text{ for all }\ \  x \in D\cap B(0,
ar)\, .
$$
\end{lemma}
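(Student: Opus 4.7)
The plan is a standard Dynkin/cutoff argument, using the generator estimate in Lemma \ref{l:lnew} as the key input.

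First, I would fix an auxiliary cutoff function once and for all. Choose $f\in C_b^2(\R^d)$ with $0\le f\le 1$, $f\equiv 0$ on $B(0,\tfrac{1+a}{2})$ and $f\equiv 1$ on $B(0,1)^c$; one can do this by a standard mollification. Set $f_r(y):=f(y/r)$. Then $f_r\equiv 0$ on $B(0,\tfrac{(1+a)r}{2})$, so in particular $f_r(x)=0$ for every $x\in B(0,ar)$, and $f_r\ge \mathbf 1_{B(0,r)^c}$ on $\R^d$.

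Next, I would apply Dynkin's formula to $f_r$ and the stopping time $\tau_D$. Since $D\subset B(0,r)$ is bounded, $\E_x[\tau_D]<\infty$, and since $f_r\in C_b^2(\R^d)$, we have
\begin{equation*}
\E_x[f_r(X_{\tau_D})]=f_r(x)+\E_x\!\left[\int_0^{\tau_D}\sL f_r(X_s)\,ds\right]
\end{equation*}
for all $x\in D$. For $x\in D\cap B(0,ar)$, using $f_r(x)=0$ and $f_r\ge \mathbf 1_{B(0,r)^c}$, this gives
\begin{equation*}
\P_x(X_{\tau_D}\in B(0,r)^c)\;\le\;\E_x[f_r(X_{\tau_D})]\;\le\;\E_x[\tau_D]\cdot\sup_{z\in\R^d}|\sL f_r(z)|.
\end{equation*}

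Finally, apply Lemma \ref{l:lnew}: since $f$ is fixed, $L_1:=\sup_y\sum_{j,k}|\partial^2_{jk}f(y)|$ is a finite constant depending only on $a$, and for every $r\in(0,1]$,
\begin{equation*}
\sup_z |\sL f_r(z)|\;\le\; c_1(1+L_1)\,\phi(r^{-2})+b_0.
\end{equation*}
Since $\phi$ is increasing and $r^{-2}\ge 1$, we have $\phi(r^{-2})\ge \phi(1)>0$ (recall $\phi\not\equiv 0$ by \textbf{(A-2)}), so $b_0\le (b_0/\phi(1))\,\phi(r^{-2})$ and hence $\sup_z|\sL f_r(z)|\le c_2(a,\phi)\,\phi(r^{-2})$ for all $r\in(0,1]$. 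Combining this with the displayed bound yields the desired inequality. The only nontrivial step is the generator estimate, but that is already handed to us by Lemma \ref{l:lnew}; everything else is just choosing the cutoff at scale $r$ and absorbing the additive constant $b_0$ into $\phi(r^{-2})$.
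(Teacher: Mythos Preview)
Your proof is correct and follows essentially the same approach the paper intends: the paper's own proof simply cites Lemma~\ref{l:lnew} and refers to the standard Dynkin/cutoff argument in \cite[Lemma 13.4.15]{KSV3}, which is exactly the construction you carry out (choose a smooth radial cutoff at scale $r$, apply Dynkin's formula, bound $\sup_z|\sL f_r(z)|$ via Lemma~\ref{l:lnew}, and absorb $b_0$ using $\phi(r^{-2})\ge\phi(1)>0$). The only small point worth noting explicitly is that $\E_x[\tau_D]<\infty$ follows from $D\subset B(0,r)$ and Lemma~\ref{l:tau}, which justifies passing to the limit in Dynkin's formula.
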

\proof Using Lemma \ref{l:lnew}, 
the proof of the lemma is similar to that of \cite[Lemma 13.4.15]{KSV3}.
 We omit the details.
\qed

Let 
$A(x,a,b):=\{y\in \R^d\colon
a\leq |y-x|<b\}$
and recall that  the Poisson kernel $K_D(x,z)$ of $X$ in $D$ is defined in \eqref{eq:sub-10}.

Unlike \cite{KSV4}, instead of Harnack inequality we 
use
Corollary \ref{rem:green}
(wihch is a combination of Proposition \ref{p:green} and Harnack inequality) in the next proposition.

\begin{prop}\label{l:k_B}
	Let $p\in (0,1)$. Then there exists a constant 
	$c(\phi, p)>0$ such that for any
$r\in (0,1)$ we have
	\[
		\int\limits_{\frac{1+p}{2}\,r}^{|z|}K_{B(0,s)}(x,z)\,ds\leq
c\tfrac{r}{\phi(r^{-2})}\,j(|z|)
	\]
	for all $x\in B(0,pr)$ and $z\in A(0,\frac{1+p}{2}\,r,r)$.
\end{prop}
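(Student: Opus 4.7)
The plan is to expand $K_{B(0,s)}(x,z)$ via the Ikeda--Watanabe formula \eqref{eq:sub-10}, interchange the integrations by Fubini, and bound the resulting double integral by splitting the inner $y$-integration at a suitable radius $\beta s$ with $\beta\in(0,1)$ chosen depending on $p$. Concretely, fix $\alpha,\beta$ with $\tfrac{2p}{1+p}<\alpha<\beta<1$; for every $s\in(\tfrac{1+p}{2}r,|z|)$ we then have $B(0,pr)\subset B(0,\alpha s)$, and a standard Harnack chain from Proposition \ref{p:green} via Theorem \ref{T:Har} extends Corollary \ref{rem:green} to $x\in B(0,\alpha s)$ and $y\in B(0,s)\setminus B(0,\beta s)$, with a constant depending on $\alpha,\beta$. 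Writing $I=I_1+I_2$ corresponding to $y\in B(0,\beta s)$ and $y\in B(0,s)\setminus B(0,\beta s)$, I will treat the two parts separately.

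For $I_1$, if $y\in B(0,\beta s)$ then $|z-y|\ge(1-\beta)|z|\asymp r$, so iterating the doubling property \eqref{H:1} gives $j(|z-y|)\le c\,j(|z|)$. Together with $\E_x[\tau_{B(0,s)}]\le c/\phi(s^{-2})\asymp c/\phi(r^{-2})$ from Lemma \ref{l:tau}, this bounds the inner part of $K_{B(0,s)}(x,z)$ by $c\,j(|z|)/\phi(r^{-2})$. Integrating over an $s$-interval of length at most $r/2$ yields $I_1\le c\,r\,j(|z|)/\phi(r^{-2})$.

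For $I_2$, the extended Corollary \ref{rem:green} together with Lemma \ref{l:tau} and $s\asymp r$ gives $G_{B(0,s)}(x,y)\le c\,r^{-d-2}\phi'(r^{-2})\,\phi(r^{-2})^{-3/2}\,\phi((s-|y|)^{-2})^{-1/2}$. Swapping the $s$- and $y$-integrations via Fubini, for each admissible $y$ the $s$-range is $(\max(|y|,\tfrac{1+p}{2}r),\min(|y|/\beta,|z|))$, and the substitution $u=s-|y|$ together with the monotonicity of $\phi$ bounds $\int \phi((s-|y|)^{-2})^{-1/2}\,ds$ by $a_y\,\phi(a_y^{-2})^{-1/2}$ with $a_y:=\min(|z|-|y|,(1/\beta-1)|y|)$. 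Since $u\mapsto u\,\phi(u^{-2})^{-1/2}$ is increasing and $a_y\le|z|-|y|\le|z-y|$, this is at most $|z-y|\,\phi(|z-y|^{-2})^{-1/2}$. Substituting back and passing to polar coordinates $\rho=|z-y|$ (the $y$-region is contained in $B(z,2r)$), invoking \eqref{prop:pot-alpha01} and the inequality $\lambda\phi'(\lambda)\le\phi(\lambda)$ from \eqref{e:Berall1} reduces the radial integrand to $c\sqrt{\phi(\rho^{-2})}$.

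The main obstacle that remains is to show $\int_0^{2r}\sqrt{\phi(\rho^{-2})}\,d\rho\le c\,r\sqrt{\phi(r^{-2})}$. I would obtain this bound via the substitution $\rho=2r/t$ and Lemma \ref{p:USC} applied with some $\varepsilon<\delta$, producing the integrable tail factor $t^{-1-\delta+\varepsilon}$ on $[1,\infty)$; the case where $r$ is bounded away from $0$ is handled separately, since there $\phi(r^{-2})$ is comparable to a positive constant. Inserting this estimate yields $I_2\le c\,r^{-d-1}\phi'(r^{-2})/\phi(r^{-2})\asymp c\,r\,j(|z|)/\phi(r^{-2})$ by Proposition \ref{prop:pot-alpha0}, and summing $I_1$ and $I_2$ gives the required bound.
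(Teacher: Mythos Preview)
Your proof is correct and follows essentially the same route as the paper. The split $K_{B(0,s)}=I_1+I_2$ at a radius $\beta s$, the doubling argument for $I_1$, the use of Corollary~\ref{rem:green} together with Lemma~\ref{l:tau} for $I_2$, and the identification of the key integral inequality $\int_0^a\sqrt{\phi(\rho^{-2})}\,d\rho\le c\,a\sqrt{\phi(a^{-2})}$ via Lemma~\ref{p:USC} all match the paper exactly. The only cosmetic differences are: for $I_1$ you bound $\int_{B(0,\beta s)}G_{B(0,s)}(x,y)\,dy$ directly by $\E_x[\tau_{B(0,s)}]$, whereas the paper dominates by the whole-space Green function $g$ and uses \eqref{prop:pot-alpha02}; and for $I_2$ you perform the $s$--integration first via Fubini and then integrate in $y$, while the paper integrates in $y$ first (shifting to polar coordinates around $z$, obtaining $\sqrt{\phi((|z|-s)^{-2})}$) and then in $s$. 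Both orderings land on the same $\int_0^a\sqrt{\phi}$ integral. You are in fact slightly more careful than the paper in making the split radius $\beta$ depend on $p$ and noting explicitly that a Harnack chain is needed to extend Corollary~\ref{rem:green} from $x\in B(0,s/2)$ to $x\in B(0,\alpha s)$ for general $p\in(0,1)$; the paper simply uses the fixed radius $3s/4$.
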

\proof
	We split the Poisson kernel into two parts:
	\[	
K_{B(0,s)}(x,z)=\int\limits_{B(0,s)}G_{B(0,s)}(x,y)j(|z-y|)\,dy=I_1(s)+I_2(s)
	\]
	where
	\begin{align*}
		I_1(s)&=\int\limits_{B(0,3s/4)}G_{B(0,s)}(x,y)j(|z-y|)\,dy\\
		I_2(s)&=\int\limits_{A(0,3s/4,s)}G_{B(0,s)}(x,y)j(|z-y|)\,dy.
	\end{align*}
	
	First we consider $I_1(s)$. 
	Since $|z-y|\geq \frac{1}{4}|z|$, 
we conclude from \eqref{e:Berall} and \eqref{prop:pot-alpha02} that
	\begin{align*}
		I_1(s)&\leq
j\left(\tfrac{|z|}{4}\right)\int\limits_{B(0,3s/4)}G(x,y)\,dy
\leq
j\left(\tfrac{|z|}{4}\right)\int\limits_{B(x,2s)}G(x,y)\,dy\\
		&\leq c_1  j\left(|z|\right)\int\limits_0^{2s}
		t^{-3}\tfrac{\phi'(t^{-2})}{\phi(t^{-2})^2}dt=\frac{c_1}2  j\left(|z|\right)\int\limits_0^{2s}
		\left(\tfrac{1}{\phi(t^{-2})}\right)'dt
		\leq c_2 \tfrac{j(|z|)}{\phi(s^{-2})}\,.
	\end{align*}
	Then,
since $|z|\leq r$, 
	\begin{align*}
		\int\limits_{\frac{1+p}{2}\,r}^{|z|}I_1(s)\,ds&\leq
c_2j(|z|)\int\limits_{\frac{1+p}{2}\,r}^{|z|} \tfrac{ds}{\phi(s^{-2})}\\
		&\leq c_2j(|z|)\tfrac{|z|-\frac{1+p}{2}r}{\phi(r^{-2})}\leq
c_2j(|z|)\tfrac{r}{\phi(r^{-2})}\,.
	\end{align*}
	
 On the other hand, by 
 %Remark \ref{rem:green}
 Corollary \ref{rem:green}
 and Lemma \ref{l:tau}, 
	\begin{align*}
		I_2(s)&\leq
c_3s^{-d-2}\tfrac{\phi'(s^{-2})}{\phi(s^{-2})}\int\limits_{A(0,3s/4,s)}\E_y[
\tau_{B(0, s)}]\,j(|z-y|)\,dy\\
		&\leq
c_4s^{-d-2}\tfrac{\phi'(s^{-2})}{\phi(s^{-2})}\int\limits_{A(0,3s/4,s)}\tfrac{
j(|z-y|)}{\sqrt{\phi(s^{-2})\phi((s-|y|)^{-2})}}\,dy\\
		&\leq
c_4s^{-d-2}\tfrac{\phi'(s^{-2})}{\phi(s^{-2})^{3/2}}\int\limits_{A(0,3s/4,s)}
\tfrac{j(|z-y|)}{\sqrt{\phi(|z-y|^{-2})}}\,dy,
	\end{align*}
	since $s-|y|\leq |z-y|$.
	
	Observing that  $A(z,3s/4,s)\subset B(z,s)\subset 
	 A(0,|z|-s,2r) $ we arrive at
	\begin{align*}
		I_2(s)&\leq
c_4s^{-d-2}\tfrac{\phi'(s^{-2})}{\phi(s^{-2})^{3/2}}\int\limits_{A(0,|z|-s,2r)}
\tfrac{
j(|v|)}{\sqrt{\phi(|v|^{-2})}}\,dv\\
			&=c_5
s^{-d-2}\tfrac{\phi'(s^{-2})}{\phi(s^{-2})^{3/2}}\int\limits_{|z|-s}^{2r}t^{-3}
\tfrac{
\phi'(t^{-2})}{\sqrt{\phi(t^{-2})}}\,dv\\
			&\leq c_6
s^{-d-2}\tfrac{\phi'(s^{-2})}{\phi(s^{-2})^{3/2}}\sqrt{\phi((|z|-s))^{-2}}.
	\end{align*}
	
Then using the fact that $s\mapsto \phi'(s^{-2})$ and $s\mapsto
\phi(s^{-2})^{-1}$ are increasing we obtain
\begin{align}
		\int\limits_{\frac{1+p}{2}\,r}^{|z|}I_2(s)\,ds&\leq c_6
\int\limits_{\frac{1+p}{2}\,r}^{|z|}\tfrac{s^{-d-2}\phi'(s^{-2})}{\phi(s^{-2})^{
3/2}}
\sqrt{\phi((|z|-s)^{-2})}\,ds\nonumber\\
		&\leq c_6
\tfrac{\left(\frac{1+p}{2}r\right)^{-d-2}\phi'(|z|^{-2})}{\phi(|z|^{-2})^{3/2}}
\int\limits_0^{|z|-\frac{1+p}{2}r}\sqrt{\phi(t^{-2})}\,dt\,.\label{eq:tmp-p_est}
	\end{align}

By Lemma \ref{p:USC} with 	
$\eps=\frac{\delta}{2}>0$ for any $a\in (0,1)$ we have
\begin{align}
	\int\limits_0^a \sqrt{\phi(s^{-2})}\,ds &= \int\limits_0^a
\tfrac{\sqrt{\phi(s^{-2})}}{\sqrt{\phi(a^{-2})}}\,ds{\sqrt{\phi(a^{-2})}}\nn\\
&	\le c_7 a^{1-\delta/2} {\sqrt{\phi(a^{-2})}} \int\limits_0^a
s^{-1+\delta/2}ds \le 
	 c_8 a \sqrt{\phi(a^{-2})} \label{e:newp1}
\end{align}
Since $\frac{1+p}{2}r \le  |z|  \le r$, 
 (\ref{eq:tmp-p_est})--\eqref{e:newp1}  together with  (\ref{eq:decr}) and \eqref{prop:pot-alpha01} give 
\begin{align*}
	\int\limits_{\frac{1+p}{2}\,r}^{|z|}I_2(s)\,ds&\leq c_9
\tfrac{\left(\tfrac{1+p}{2}r\right)^{-d-2}\phi'(|z|^{-2})}{\phi(|z|^{-2})^{3/2}}
\left(|z|-\tfrac{1+p}{2}r\right)\phi\left(\left(|z|-\tfrac{1+p}{2}r\right)^
{ -2}\right)^{1/2}\\
&\leq c_9
\tfrac{\left(\frac{1+p}{2}r\right)^{-d-2}\phi'(|z|^{-2})}{\phi(|z|^{-2})^{3/2}}
|z|\sqrt{\phi\left(|z|^{-2}\right)}\leq 
c_9
|z|^{-d-2}\phi'(|z|^{-2})
\tfrac{r}{\phi(r^{-2})}\\
&\leq c_{10} j(|z|)\tfrac{r}{\phi(r^{-2})}.
\end{align*}		
\qed

\section{Uniform Boundary Harnack Principle}\label{sec:bhp}

In this section we give a proof of the uniform boundary Harnack
principle
for
$X$ in an arbitrary open set with the constant not depending on the open set
itself. 
This type of the  boundary Harnack principle was first obtained in
\cite{BKK} for rotationally symmetric stable processes. 
Since, using results of previous section, the proofs in this section are almost
identical to the one in \cite[Section 5]{KSV4}, we give details only on parts
that require extra explanation.

Recall that $X=(X_t,\P_x)$ is a subordinate process
 defined by $X_t=W_{S_t}$ where $W=(W_t,\P_x)$ is  a Brownian motion in $\R^d$
independent of the  subordinator $S$ and 
the Laplace exponent $\phi$ of the  subordinator $S$
satisfies {\bf(A-1)}--{\bf(A-3)}.

Using  \eqref{H:1}, \eqref{eq:sub-11},
Proposition \ref{p:Poisson1}, 
Proposition \ref{l:k_B} and
the fact that 
for $U \subset D$
\beq\label{e:KDU}
K_D(x,z)= K_U(x,z) + \E_x\left[   K_D(X_{\tau_U}, z)\right], \qquad (x,z) \in U
\times D^c,
\eeq
the proof of the next result is the same as the one of \cite[Lemma 5.2]{KSV4}.

\begin{lemma}\label{lK1_1} Assume that $X$ is transient and satisfies
{\bf(A-1)}--{\bf(A-4)}. 
For every $p \in (0,1)$, there exists $c=
c( \phi, p)>0$ such that for every $r \in (0, 1)$, $z_0 \in \R^d$, $U \subset
B(z_0,r)$ and for any  $ (x,y) \in (U\cap B(z_0, pr)) \times B(z_0, r)^c$,
\begin{eqnarray*}
K_U(x, y)
\le
c\,\tfrac{1}{\phi(r^{-2})} 
\left(\int\limits_{U\setminus B\left(z_0, \frac{(1+p)r}{2}\right)}  j(|z-z_0|) 
K_U(z,y)dz +j(|y-z_0|) \right).
\end{eqnarray*}
\end{lemma}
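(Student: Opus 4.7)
The plan is to combine the identity \eqref{e:KDU} with an averaging over a one-parameter family of intermediate sets, apply Proposition \ref{l:k_B} to the exterior piece, and carefully bound the remaining Poisson-kernel integral by revisiting the proof of Proposition \ref{l:k_B}.

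First, for each $s \in (\tfrac{(1+p)r}{2}, r)$, set $V_s = U \cap B(z_0, s)$. By \eqref{e:KDU},
\[
K_U(x, y) = K_{V_s}(x, y) + \int_{U \setminus V_s} K_{V_s}(x, z) K_U(z, y)\, dz.
\]
Since $V_s \subset B(z_0, s)$, domain monotonicity of the Green function gives $K_{V_s}(x, w) \le K_{B(z_0, s)}(x, w)$ for any $w \notin B(z_0, s)$; this applies both to $y$ (because $|y - z_0| > r > s$) and to each $z \in U \setminus V_s$. Integrating over $s \in (\tfrac{(1+p)r}{2}, r)$, dividing by the length $\tfrac{(1-p)r}{2}$, and applying Fubini converts the double integral into
\[
\int_{U \setminus B(z_0, \frac{(1+p)r}{2})} K_U(z, y) \left(\int_{\frac{(1+p)r}{2}}^{|z - z_0|} K_{B(z_0, s)}(x, z)\, ds\right) dz,
\]
and Proposition \ref{l:k_B} bounds the inner $s$-integral by $c \tfrac{r}{\phi(r^{-2})} j(|z - z_0|)$; this produces the exterior term of the claim.

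What remains is to bound $\int_{\frac{(1+p)r}{2}}^{r} K_{B(z_0, s)}(x, y)\, ds$ by $c \tfrac{r}{\phi(r^{-2})} j(|y - z_0|)$, and here I would mimic the argument of Proposition \ref{l:k_B}, now with $y \in B(z_0, r)^c$. Split $K_{B(z_0, s)}(x, y) = I_1(s) + I_2(s)$ via Ikeda--Watanabe over $B(z_0, 3s/4)$ and $A(z_0, 3s/4, s)$. For $I_1(s)$, the bound $|w - y| \ge |y - z_0|/4$ (valid since $s < r < |y - z_0|$) combined with $\E_x \tau_{B(z_0, s)} \le c/\phi(s^{-2})$ (from Lemma \ref{l:tau}) yields $I_1(s) \le c\, j(|y - z_0|/4)/\phi(s^{-2}) \le c\, j(|y - z_0|)/\phi(s^{-2})$, where the last step combines \eqref{H:1} for small $|y - z_0|$ and \eqref{eq:sub-11} for large $|y - z_0|$; integrating gives $c r j(|y - z_0|)/\phi(r^{-2})$. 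For $I_2(s)$ I would split on the size of $|y - z_0|$: if $|y - z_0| \ge 2r$, then $|w - y| \ge |y - z_0|/2$ for $w \in A(z_0, 3s/4, s)$ (since $|w - z_0| < s < r \le |y - z_0|/2$) gives the same direct bound as $I_1$; if $r < |y - z_0| < 2r$, I would instead use the exact $I_2$-estimate from Proposition \ref{l:k_B}'s proof, namely $I_2(s) \le c s^{-d-2} \phi'(s^{-2}) \phi(s^{-2})^{-3/2} \sqrt{\phi((|y - z_0| - s)^{-2})}$, integrate via $u = |y - z_0| - s$, and apply \eqref{e:newp1} to $\int_0^b \sqrt{\phi(u^{-2})}\, du \le c b \sqrt{\phi(b^{-2})}$ with $b = |y - z_0| - \tfrac{(1+p)r}{2} \asymp r$. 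This yields $\int_{\frac{(1+p)r}{2}}^{r} I_2(s)\, ds \le c r^{-d-1} \phi'(r^{-2})/\phi(r^{-2})$, comparable to $c r j(|y - z_0|)/\phi(r^{-2})$ since $j(|y - z_0|) \asymp j(r) \asymp r^{-d-2} \phi'(r^{-2})$ in that regime.

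The main obstacle is the borderline case $|y - z_0|$ just slightly larger than $r$: the crude Poisson-kernel bound $K_{B(z_0, s)}(x, y) \le c j(|y - z_0| - s)/\phi(r^{-2})$ from Proposition \ref{p:Poisson1} is not good enough because $\int j(u)\, du$ diverges near $u = 0$, so integrating in $s$ blows up as $|y - z_0| \downarrow r$. The rescue is the $\sqrt{\phi((|y - z_0| - s)^{-2})}$ refinement coming out of Proposition \ref{l:k_B}'s proof, which remains integrable near zero since by Lemma \ref{p:USC} one has $\sqrt{\phi(u^{-2})} \le c\, u^{-(1-\delta+\eps)}$ for some $\eps < \delta$. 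Combining all estimates and dividing by $\tfrac{(1-p)r}{2}$ yields the claimed inequality.
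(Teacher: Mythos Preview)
Your proposal is correct and follows essentially the same route as the paper, which defers to \cite[Lemma 5.2]{KSV4}: average the identity \eqref{e:KDU} over $s\in(\tfrac{(1+p)r}{2},r)$, apply Proposition~\ref{l:k_B} to the interior piece, and for the direct-exit term $\int K_{B(z_0,s)}(x,y)\,ds$ repeat the $I_1/I_2$ decomposition of Proposition~\ref{l:k_B}'s proof, handling the borderline $r<|y-z_0|<2r$ via the $\sqrt{\phi((|y-z_0|-s)^{-2})}$ refinement rather than the cruder bound \eqref{P1-worse}. Your identification of the obstacle and its resolution are exactly the point of the argument.
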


The process $X$ satisfies the hypothesis ${\bf H}$ in \cite{Sz}. Therefore, 
by \cite[Theorem 1]{Sz},  for  a Lipschitz open set $V\subset \R^d$ and an open
subset $U \subset V$
\beq\label{e:Lip}
\P_x(X_{\tau_U} \in \partial V)=0 \qquad \text{ and } \qquad \P_x(X_{\tau_U} \in
dz) = K_U(x,z)dz \quad\text{on }V^c.
\eeq
Using \eqref{e:Lip} and 
Lemma \ref{lK1_1}, 
the proof of the next result is the same as the one of \cite[Lemma 5.3]{KSV4}.

\begin{lemma}\label{lK1}
Assume that $X$ is transient and satisfies
{\bf(A-1)}--{\bf(A-4)}. 
For every $p \in (0,1)$, there exists $c=
c( \phi, p)>0$ such that for every
$r \in (0, 1)$, for every $z_0 \in \R^d$, $U \subset B(z_0,r)$ and  any
nonnegative function $u$ in $\RR^d$
which is regular harmonic in $U$ with respect to $X$ and vanishes in
$U^c \cap B(z_0, r)$ we have
\begin{eqnarray*}
u(x)
\le
c\,\tfrac{1}{\phi(r^{-2})}
\int\limits_{\left(U\setminus B(z_0, \frac{(1+p)r}{2})\right) \cup B(z_0,r)^c} 
j(
|y-z_0|) u(y)dy,\quad
x \in U \cap B(z_0, pr).
\end{eqnarray*}
\end{lemma}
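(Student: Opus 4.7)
The plan is to combine the Poisson kernel estimate of Lemma \ref{lK1_1} with the martingale representation of the regular harmonic function $u$. The first step is to establish a clean representation. Since $u$ is regular harmonic in $U$, $u(x)=\E_x[u(X_{\tau_U})]$, and because $u$ vanishes on $U^c\cap B(z_0,r)$ this simplifies to $u(x)=\E_x[u(X_{\tau_U});\,X_{\tau_U}\in B(z_0,r)^c]$. Using that $U\subset B(z_0,r)$ and that $B(z_0,r)$ is a Lipschitz open set, identity \eqref{e:Lip} applied with $V=B(z_0,r)$ gives $\P_x(X_{\tau_U}\in\partial B(z_0,r))=0$, and on the complement the exit distribution has density $K_U(x,\cdot)$. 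Hence I would record
\[
u(x)\;=\;\int_{B(z_0,r)^c} K_U(x,y)\,u(y)\,dy,\qquad x\in U\cap B(z_0,pr),
\]
and note that the same identity holds with $x$ replaced by any $z\in U$.

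The second step is to plug the bound of Lemma \ref{lK1_1} into this representation. The piece of the bound coming from $j(|y-z_0|)$ directly yields the term
$c\phi(r^{-2})^{-1}\int_{B(z_0,r)^c} j(|y-z_0|)u(y)\,dy$. For the other piece I would invoke Tonelli's theorem (legitimate since every integrand is nonnegative) to exchange the order of integration, producing
\[
c\,\phi(r^{-2})^{-1}\int_{U\setminus B(z_0,\frac{(1+p)r}{2})} j(|z-z_0|)\left(\int_{B(z_0,r)^c}K_U(z,y)\,u(y)\,dy\right)dz.
\]
Crucially, each such $z$ lies in $U$, so the inner integral equals $u(z)$ by the representation obtained in the first step.

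Finally, because $U\setminus B(z_0,\frac{(1+p)r}{2})\subset B(z_0,r)$ is disjoint from $B(z_0,r)^c$, the two resulting contributions combine into a single integral of $j(|y-z_0|)u(y)$ over the union appearing on the right-hand side of the statement, which completes the proof. The only technical subtlety is the use of \eqref{e:Lip} to discard the boundary $\partial B(z_0,r)$: without it, Ikeda--Watanabe would give a density only on the open exterior $\overline{B(z_0,r)}^c$, and the identification of the inner integral as $u(z)$ in the Fubini step would break down. Everything else is essentially bookkeeping once Lemma \ref{lK1_1} is in hand, which is why the excerpt attributes the argument directly to \cite[Lemma 5.3]{KSV4}.
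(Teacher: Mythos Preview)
Your proposal is correct and follows essentially the same route the paper indicates: the representation $u(x)=\int_{B(z_0,r)^c}K_U(x,y)u(y)\,dy$ via \eqref{e:Lip}, then insertion of the Poisson kernel bound from Lemma~\ref{lK1_1}, a Fubini swap, and the identification of the inner integral as $u(z)$ are exactly the ingredients behind \cite[Lemma~5.3]{KSV4}. Your explicit remark that \eqref{e:Lip} is what allows the inner integral to reproduce $u(z)$ (rather than only $\E_z[u(X_{\tau_U});\,X_{\tau_U}\in\overline{B(z_0,r)}^c]$) is precisely the point the paper is gesturing at when it lists \eqref{e:Lip} as one of the two inputs.
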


We give a detailed  proof of the next result.

\begin{lemma}\label{lK2_1}
Assume that $X$ is transient and satisfies
{\bf(A-1)}--{\bf(A-4)}. 
There exists $C_2=C_2( d,  \phi)>1$ such that for every $r \in (0, 1)$,
for every $z_0 \in \R^d$, $U \subset B(z_0,r)$ and for any  $ (x,y) \in U \cap
B(z_0, \frac{r}{2}) \times B(z_0, r)^c$,
\begin{eqnarray*}
\lefteqn{C^{-1}_{2}\, \E_x[\tau_{U}]
 \left(\int\limits_{U\setminus B(z_0, \frac{r}{2})}  j(|z-z_0|)  K_U(z,y)dz 
+j(|y-z_0|) \right)} \\
&\le &K_U(x,y) \le
C_2\,  \E_x[\tau_{U}] \left(\int\limits_{U\setminus B(z_0, \frac{r}{2})} 
j(|z-z_0|) K_U(z,y)dz  +j(|y-z_0|) \right).
 \end{eqnarray*}
 \end{lemma}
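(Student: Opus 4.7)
\textbf{Proof plan for Lemma \ref{lK2_1}.}

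The proof is structured around the strong Markov decomposition of $K_U$ along the auxiliary set $V := U \cap B(z_0, r/2)$, with $W := U \setminus V$.

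\emph{Step 1 (Decomposition identity).} The strong Markov property at $\tau_V$ (valid since $V \subset U$ implies $\tau_V \le \tau_U$) gives
\[
K_U(x,y) \,=\, K_V(x,y) + \int_W K_V(x,z)\,K_U(z,y)\,dz, \qquad x \in V,\ y \in B(z_0,r)^c,
\]
where the first term records direct exits from $V$ out of $U$ and the integral records exits into $W$ followed by later exits out of $U$.

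\emph{Step 2 (Pointwise $j$-comparison).} The central ingredient is: for any $w \in V \subset B(z_0, r/2)$ and any $\zeta$ with $|\zeta-z_0| \ge r/2$, the distances $|w-\zeta|$ and $|\zeta-z_0|$ differ by at most $r/2 \le 1/2$. Combining \eqref{H:1} (small-scale doubling) and \eqref{eq:sub-11} (large-scale unit-shift comparability) then yields $j(|w-\zeta|) \asymp j(|\zeta-z_0|)$, with constants depending only on $\phi$. This and the Ikeda--Watanabe representation $K_V(x,\zeta) = \int_V G_V(x,w)\,j(|w-\zeta|)\,dw$ give
\[
K_V(x,y) \,\asymp\, \E_x[\tau_V]\, j(|y-z_0|), \qquad K_V(x,z) \,\asymp\, \E_x[\tau_V]\, j(|z-z_0|)\text{ for } z \in W.
\]
(For $z\in W$ with $|z-z_0|$ very close to $r/2$ the pointwise estimate must be supplemented by the integrability $\int_W K_V(x,\cdot) \le 1$ to absorb the boundary-layer contribution.)

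\emph{Step 3 (Upper bound).} Inserting the bounds of Step 2 into the identity of Step 1 and using the trivial inequality $\E_x[\tau_V] \le \E_x[\tau_U]$ yields the upper half of \eqref{lK2_1} with the required factor $\E_x[\tau_U]$.

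\emph{Step 4 (Lower bound).} Here we must argue directly rather than through the decomposition, because $\E_x[\tau_V]$ can be strictly smaller than $\E_x[\tau_U]$. The direct lower bound
\[
K_U(x,y) = \int_U G_U(x,z)\, j(|z-y|)\,dz \,\ge\, c\, j(|y-z_0|)\,\E_x[\tau_U]
\]
follows from applying the $j$-comparison of Step 2 directly to $K_U$. To absorb the integral term, one shows $\int_W j(|z-z_0|)\,K_U(z,y)\,dz \le c\,j(|y-z_0|)$ via domain monotonicity $K_U \le K_{B(z_0,r)}$, the Poisson-kernel upper bound of Proposition~\ref{p:Poisson1}, and an integration-by-parts style estimate in the spirit of Lemma~\ref{l:l}, together with the $\phi' \asymp \lambda^{-1}\phi$ type relations from Lemma~\ref{lem:gr-10} and \eqref{e:Berall1}. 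Combining yields
\[
K_U(x,y) \,\ge\, c\,j(|y-z_0|)\E_x[\tau_U] \,\ge\, c'\,\E_x[\tau_U]\Bigl(j(|y-z_0|) + \int_W j(|z-z_0|)\,K_U(z,y)\,dz\Bigr).
\]

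\emph{Main obstacle.} The principal difficulty is the non-comparability of $\E_x[\tau_V]$ and $\E_x[\tau_U]$ in general (e.g.\ when $x$ is close to $\partial B(z_0,r/2)$ inside $U$ but far from $\partial U$), which blocks the symmetric use of the decomposition identity for the lower bound; this forces a direct Ikeda--Watanabe argument for the lower bound combined with the integrability estimate $\int_W j(|z-z_0|)K_U(z,y)\,dz \lesssim j(|y-z_0|)$. A secondary technical point is the failure of the pointwise $j$-comparison in Step 2 for $z \in W$ close to $\partial V$, which is handled by splitting $W$ into a bulk region and a thin boundary layer and using $\int_W K_V(x,\cdot)\le 1$ on the latter.
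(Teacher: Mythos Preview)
Your Step~4 contains a genuine error: the claimed inequality $\int_W j(|z-z_0|)K_U(z,y)\,dz \lesssim j(|y-z_0|)$ is false in general. When $y$ lies just outside $B(z_0,r)$, say $|y-z_0|=r+\epsilon$ with $\epsilon$ small, the Poisson kernel upper bound of Proposition~\ref{p:Poisson1} gives only $K_U(z,y)\le K_{B(z_0,r)}(z,y)\le c\,j(\epsilon)/\phi(r^{-2})$; carrying out the integral in $z$ over $W$ yields at best a bound of order $j(\epsilon)$, which blows up as $\epsilon\to 0$ while $j(|y-z_0|)\approx j(r)$ stays bounded. The integral term in the statement is there precisely to capture this boundary singularity, so it cannot be absorbed into $j(|y-z_0|)$.

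The paper instead establishes the comparability you dismissed as blocked: for the intermediate set $U_2:=U\cap B(z_0,2r/3)$ one has $\E_x[\tau_U]\asymp\E_x[\tau_{U_2}]$ whenever $x\in U\cap B(z_0,r/2)$. The point is that such $x$ sits well inside $B(z_0,2r/3)$, so Lemma~\ref{l2.1} gives $\P_x(X_{\tau_{U_2}}\in B(z_0,2r/3)^c)\le c\,\phi(r^{-2})\E_x[\tau_{U_2}]$; combined with $\sup_z\E_z[\tau_U]\le c/\phi(r^{-2})$ from Lemma~\ref{l:tau} and the strong Markov property, this yields $\E_x[\tau_U]\le c\,\E_x[\tau_{U_2}]$. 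Your obstacle was an artifact of cutting at $r/2$ rather than at an intermediate radius. The paper uses three nested radii $r/2<2r/3<3r/4$ to separate the region where $x$ lives from the boundary layer where the $j$-comparison fails, and invokes Lemma~\ref{lK1_1} to pass between the annular integrals; the lower bound then comes directly from the $I_2+I_3$ terms of the decomposition at $U_2$, together with the exit-time comparability just described.
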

\proof
Without loss of generality, we assume $z_0=0$. Fix $r \in (0, 1)$ and let 
$U_1:=U \cap B(0, \frac12 r)$, $U_2:=U \cap B(0, \frac23 r)$
 and $U_3:=U \cap B(0, \frac34 r)$.
Let $x\in U\cap B(0, \frac{r}{2})$, $y\in B(0,r)^c$.
By \eqref{e:KDU},
\begin{eqnarray*}
K_{U}(x,y)&=&\E_x[K_U(X_{\tau_{U_2}},y)]  + K_{U_2}(x, y)\\
   &=&\int\limits_{U\setminus U_2}K_U(z,y)\P_x(X_{\tau_{U_2}} \in dz) +
K_{U_2}(x, y)\\
  &=&\int\limits_{U_3\setminus U_2}K_U(z,y)\P_x(X_{\tau_{U_2}} \in dz)
+\int\limits_{U\setminus U_3}K_U(z,y)K_{U_2}(x,z)dz+  K_{U_2}(x, y) \\
  &=&\int\limits_{U_3\setminus U_2}K_U(z,y)\P_x(X_{\tau_{U_2}} \in
dz)+\int\limits_{U\setminus U_3}K_U(z,y)\int\limits_{U_2}G_{U_2}(x,w)j(|z-w|)dw
dz\\
  &&\quad+
  \int\limits_{U_2}G_{U_2}(x,w)j(|y-w|)dw=:I_1+I_2+I_3.\\
 \end{eqnarray*}
From Lemma  \ref{l2.1} and Lemma \ref{lK1_1}, we see that there exist
$c_1$ and $c_2$ such that 
  \beq \label{e:k1}
 I_1\leq c_1\left(\sup_{z \in U_3}K_U(z,y)\right)\phi(r^{-2})\E_x[\tau_{U_2}]  
\,\le\, c_2\E_x[\tau_{U_2}]
 \left(\int\limits_{U\setminus U_3}   j( |z|)  K_U(z,y) dz  + j(|y|) \right).
\eeq

Now using \eqref{H:1} and \eqref{eq:sub-11} one can check as in \cite{KSV4} that
there exists $c_5=c_5(d, \phi)>1$ such that
  \beq \label{e:k2}
c_5^{-1}  \E_x[\tau_{U_2}]
  \int\limits_{U\setminus U_3}  j(|z|)  K_{U}(z,y)dz \le I_2 \le
c_5\E_x[\tau_{U_2}]\int\limits_{U\setminus U_3 } j(|z|)  K_{U}(z,y)dz
\eeq
and
 \beq \label{e:k2_1}
 c_5^{-1} \E_x[\tau_{U_2}]j(|y|) \le I_3  \le c_5 \E_x[\tau_{U_2}]j(|y|)\, .
\eeq
The upper bound follows from \eqref{e:k1}--\eqref{e:k2_1}.

Using the strong Markov property,  we get
  \begin{eqnarray*}
\E_x[\tau_{U}]&= &\E_x[\tau_{U_2}] + \E_x\left[  
\E_{X_{\tau_{U_2}}}[\tau_{U}]\right]\\
&\le &\E_x[\tau_{U_2}] + \left(\sup_{z \in U}\E_z[\tau_{U}]\right) 
\P_x\big(X_{\tau_{U_2}} \in B(0, \tfrac{2r}{3} )^c\big) \\
&\le &\E_x[\tau_{U_2}] + c_6\phi(r^{-2})^{-1}\,
\phi((\tfrac{2r}{3})^{-2})\E_x[\tau_{U_2}]\,\le \, c_7 \E_x[\tau_{U_2}],
 \end{eqnarray*}
where in the second inequality we have used  Lemma \ref{l:tau}
and Lemma \ref{l2.1} and in last inequality we have used \eqref{e:Berall}.

 Since
\begin{eqnarray*}
 \int\limits_{U\setminus U_1}  j(|z|)  K_U(z,y)dz
  &=&\int\limits_{U\setminus U_3}  j(|z|)  K_U(z,y)dz+\int\limits_{U_3 \setminus
U_1}
  j(|z|)  K_U(z,y)dz \\
    &\le &\int\limits_{U\setminus U_3}  j(|z|)  K_U(z,y)dz +\left(\sup_{z \in
U_3} K_U(z,y)
    \right) \int\limits_{A(0, r/2, 3r/4)}
   j(|y|)  dy,
    \end{eqnarray*}
  by \eqref{prop:pot-alpha01} and Lemma
  \ref{lK1_1},
\begin{eqnarray}
 &&\int\limits_{U\setminus U_1}  j(|z|)  K_U(z,y)dz  \nn\\
  & \le& \left(1+ \frac{c_8}{\phi(r^{-2})}\int\limits_{\frac{r}{2}}^{\frac{3r}{4}}  s^{-3}
\phi'(s^{-2})  ds \right)
 \left(\int\limits_{U\setminus U_3}  j(|z|)  K_U(z,y)dz +
j(|y|)\right)\nonumber\\
 & =& \left(1-2 \frac{c_8}{\phi(r^{-2})}\int\limits_{\frac{r}{2}}^{\frac{3r}{4}} 
(\phi(s^{-2}))'  ds \right)
 \left(\int\limits_{U\setminus U_3}  j(|z|)  K_U(z,y)dz +
j(|y|)\right)\nonumber\\
  &\le& 
  \left(1+c_{9} \frac{\phi( 4 r^{-2})}{\phi(r^{-2})}\right)
   \left(\int\limits_{U\setminus U_3}  j(|z|)  K_U(z,y)dz + j(|y|)\right).
\label{e:k4}
  \end{eqnarray}
Combining \eqref{e:Berall} and \eqref{e:k2}--\eqref{e:k4}, we finish the proof
of the lower bound.
\qed

Using Lemmas \ref{lK1} and \ref{lK2_1}, 
the proof of the next result is the same as the one of \cite[Lemma 5.5]{KSV4}.

\begin{lemma}\label{lK2}
Assume that $X$ is transient and satisfies
{\bf(A-1)}--{\bf(A-4)}. 
For every $z_0 \in \R^d$, every open set $U \subset B(z_0,r)$ and for any
nonnegative function $u$ in $\RR^d$ which is regular harmonic in $U$ with
respect to $X$ and vanishes a.e.~on $U^c \cap B(z_0, r)$
\begin{eqnarray*}
C_2^{-1}\E_x[\tau_{U}] \int\limits_{B(z_0, \frac{r}{2})^c}  j(|y-z_0|)  u(y)dy
\le u(x)\le
C_2 \E_x[\tau_{U}] \int\limits_{B(z_0, \frac{r}{2})^c} j(|y-z_0|)  u(y)dy
\end{eqnarray*}
for every  $ x \in U \cap B(z_0, \frac{r}{2})$ (where $
C_2$ is the constant from Lemma \ref{lK2_1}).
\end{lemma}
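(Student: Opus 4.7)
The plan is to represent $u(x)$ via the Poisson kernel using regular harmonicity, apply the pointwise two-sided Poisson kernel bound from Lemma \ref{lK2_1}, and then collapse the resulting expression using Fubini and the vanishing hypothesis on $u$.

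First, since $u$ is regular harmonic in $U$ with respect to $X$, the relation \eqref{e:Lip} (applied with $V$ a large Lipschitz superset, or directly using that $X$ has no creeping, so $\P_x(X_{\tau_U}\in \partial U)=0$) combined with the hypothesis that $u$ vanishes a.e.\ on $U^c\cap B(z_0,r)$ gives
\[
u(x)=\E_x[u(X_{\tau_U})]=\int_{B(z_0,r)^c} u(y)\,K_U(x,y)\,dy
\]
for every $x\in U\cap B(z_0,\tfrac{r}{2})$. The same identity, with $x$ replaced by any $z\in U$, will be used again momentarily.

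Second, for each fixed $y\in B(z_0,r)^c$ I apply Lemma \ref{lK2_1} to $K_U(x,y)$, multiply by $u(y)$, and integrate in $y$ over $B(z_0,r)^c$. For the upper bound this yields
\[
u(x)\le C_2\,\E_x[\tau_U]\int_{B(z_0,r)^c} u(y)\left(\int_{U\setminus B(z_0,r/2)} j(|z-z_0|)K_U(z,y)\,dz + j(|y-z_0|)\right)dy.
\]
By Fubini, the iterated integral becomes
\[
\int_{U\setminus B(z_0,r/2)} j(|z-z_0|)\left(\int_{B(z_0,r)^c} u(y)K_U(z,y)\,dy\right)dz,
\]
and the inner integral equals $u(z)$ by the first step, since every such $z$ lies in $U$.

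Third, one combines the two remaining integrals: $(U\setminus B(z_0,r/2))\cup B(z_0,r)^c$ differs from $B(z_0,r/2)^c$ only by the set $U^c\cap B(z_0,r)\cap B(z_0,r/2)^c$, on which $u=0$ almost everywhere by hypothesis. Thus
\[
\int_{U\setminus B(z_0,r/2)} j(|z-z_0|)u(z)\,dz + \int_{B(z_0,r)^c} j(|y-z_0|)u(y)\,dy = \int_{B(z_0,r/2)^c} j(|w-z_0|)u(w)\,dw,
\]
which delivers the upper bound. The lower bound is obtained by repeating the argument verbatim with the lower estimate of Lemma \ref{lK2_1}. There is no real obstacle here; the only subtle point is the Fubini step plus recognizing, via regular harmonicity applied at $z\in U\setminus B(z_0,r/2)$, that $\int_{B(z_0,r)^c} u(y)K_U(z,y)\,dy=u(z)$, which allows the Poisson representation to be closed back on itself.
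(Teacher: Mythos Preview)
Your proposal is correct and follows the same approach as the paper (which defers to \cite[Lemma 5.5]{KSV4}): represent $u$ via the Poisson kernel of $U$, apply the two-sided estimate of Lemma~\ref{lK2_1} pointwise, use Fubini, and collapse back using regular harmonicity together with the vanishing hypothesis. The paper's additional citation of Lemma~\ref{lK1} is essentially for the Poisson-representation step (established in its proof via \eqref{e:Lip} with $V=B(z_0,r)$), which you carry out directly.
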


As in \cite[Corollary 5.6]{KSV4}, 
the last two lemmas immediately imply the following approximate factorization of
the Poisson kernel.
\begin{corollary}\label{c:approx-factor}
Assume that $X$ is transient and satisfies
{\bf(A-1)}--{\bf(A-4)}. 
Let $z_0\in \R^d$ and $D\subset \R^d$ be open. 
Then for every $r\in (0,1)$ and all $(x,y)\in (D\cap  B(z_0,\frac{r}{2}))\times
(D^c\cap B(z_0,r)^c)$  it holds that
\begin{equation}\label{e:approx-factor}
C_2^{-1} \E_x[\tau_
{D\cap B(z_0,r)}] 
A_D(z_0,r,y) \le  K_D(x,y) \le
C_2\, \E_x[\tau_
{D\cap B(z_0,r)}] 
A_D(z_0,r,y)\, ,
\end{equation}
where
\begin{eqnarray*}
A_D(z_0,r,y)&:=&\int\limits_{
({D\cap B(z_0,r)})\setminus B(z_0,\frac{r}{2})}j(|z-z_0|)K_
{D\cap B(z_0,r)}(z,y)\, dz\\
&&+j(|y-z_0|)+\int\limits_{B(z_0,\frac{r}{2})^c}
j(|z-z_0|)\E_z\left[K_D(X_{\tau_
{D\cap B(z_0,r)}},y)\right]\, dz \, .
\end{eqnarray*}
\end{corollary}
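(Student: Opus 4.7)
The plan is to set $U := D \cap B(z_0, r)$ and apply the strong Markov identity \eqref{e:KDU} to write
\begin{equation*}
K_D(x, y) \,=\, K_U(x, y) \,+\, h(x), \qquad h(x) := \E_x\bigl[K_D(X_{\tau_U}, y)\bigr],
\end{equation*}
for $(x, y) \in U \times D^c$, where $h$ is understood with the convention $K_D(\cdot, y) \equiv 0$ on $D^c$. The two pieces are to be estimated separately by Lemmas \ref{lK2_1} and \ref{lK2}, which both produce the same prefactor $\E_x[\tau_U]$; summing the resulting bounds will furnish the three contributions to $A_D(z_0, r, y)$ with a single comparison constant.

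The first piece is immediate: Lemma \ref{lK2_1} applied to $U \subset B(z_0, r)$ yields
\begin{equation*}
K_U(x, y) \,\asymp\, \E_x[\tau_U]\left(\int_{U \setminus B(z_0, r/2)} j(|z - z_0|)\, K_U(z, y)\, dz \,+\, j(|y - z_0|)\right),
\end{equation*}
which is exactly $\E_x[\tau_U]$ times the first two terms of $A_D(z_0, r, y)$.

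For the residual term $h$, I would extend it to all of $\R^d$ by the convention above and observe that (i) $h$ is regular harmonic in $U$ (this is just the strong Markov property at $\tau_U$), and (ii) $h \equiv 0$ on $U^c \cap B(z_0, r) = D^c \cap B(z_0, r)$, since for such $x$ one has $\tau_U = 0$ and $K_D(x, y) = 0$. Lemma \ref{lK2} then gives
\begin{equation*}
h(x) \,\asymp\, \E_x[\tau_U] \int_{B(z_0, r/2)^c} j(|z - z_0|)\, h(z)\, dz.
\end{equation*}
On $B(z_0, r/2)^c$ the identity $h(z) = \E_z[K_D(X_{\tau_U}, y)]$ is clear in all subcases: if $z \in B(z_0, r)^c$ then $\tau_U = 0$ under $\P_z$, so both sides equal $K_D(z, y)$ (zero on $D^c$ by convention); if $z \in U \cap (B(z_0, r) \setminus B(z_0, r/2))$ the equality is the definition; and if $z$ lies in $D^c \cap (B(z_0, r) \setminus B(z_0, r/2))$ both sides vanish. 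This reproduces precisely the third term in $A_D(z_0, r, y)$.

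Adding the two two-sided estimates and relabelling the constant gives \eqref{e:approx-factor}. The only delicate step is the bookkeeping around the decomposition of $K_D$ into $K_U + h$: one must check carefully that $X_{\tau_U}\in D^c$ contributes only to $K_U(x,y)$ (so that $h$ captures just the "continuation" past $\partial B(z_0,r) \cap D$), and that the zero-extension of $h$ makes both the vanishing hypothesis of Lemma \ref{lK2} and the pointwise identification of $h(z)$ on $B(z_0, r/2)^c$ hold. Once this is set up, no further estimates are needed beyond the two lemmas.
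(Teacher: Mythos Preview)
Your proposal is correct and follows exactly the route the paper intends: decompose $K_D(x,y)=K_U(x,y)+\E_x[K_D(X_{\tau_U},y)]$ via \eqref{e:KDU} with $U=D\cap B(z_0,r)$, apply Lemma~\ref{lK2_1} to the first piece and Lemma~\ref{lK2} to the second (after checking that the latter is nonnegative, regular harmonic in $U$, and vanishes on $U^c\cap B(z_0,r)$), and add. Since both lemmas carry the same constant $C_2$, no relabelling is even needed; the sum gives \eqref{e:approx-factor} with the stated constant.
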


Lemma \ref{lK2} and \eqref{e:approx-factor} imply 
the following uniform boundary Harnack
principle. Note that the constants in the 
following  theorem does not depend on the open set itself. That is why this type
of result is called
 the uniform boundary Harnack principle.

\begin{thm}\label{UBHP} Suppose that $\phi$
satisfies {\bf(A-1)}--{\bf(A-3)}.
There exists a constant $c= c(\phi)>0$ such that
\begin{itemize}
    \item[(i)] For every $z_0 \in \R^d$, every open set $D\subset \R^d$, every
$r\in (0,1)$
    and for any nonnegative functions $u, v$ in $\R^d$ which are regular
harmonic in $D\cap
    B(z_0, r)$ with respect to $X$ and vanish 
    a.e.~on $D^c \cap B(z_0, r)$, we have
        $$
        \frac{u(x)}{v(x)}\,\le c\,\frac{u(y)}{v(y)}
        $$
        for all $x, y\in D\cap B(z_0, \frac{r}{2})$.
    \item[(ii)] If $X$ is, additionally, transient and satisfies {\bf(A-4)}, 
then for every $z_0 \in \R^d$, every
  Greenian open set
     $D\subset \R^d$, every $r\in (0,1)$, we have
    $$
    K_D(x_1, y_1)K_D(x_2, y_2) \le c K_D(x_1, y_2)K_D(x_2, y_1)
    $$
    for all $x_1, x_2 \in D \cap B(z_0, \frac{r}{2})$ and all $y_1, y_2 \in
    \overline{D}^c \cap B(z_0, r)^c$.
\end{itemize}
\end{thm}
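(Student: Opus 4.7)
The plan is to prove the two assertions in turn: (ii) follows essentially directly from Corollary~\ref{c:approx-factor}, and (i) reduces in full generality---via a dimension lifting trick---to an application of Lemma~\ref{lK2}.

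For (ii), the approximate factorization
\[
C_2^{-1}\,\E_x[\tau_U]\,A_D(z_0,r,y)\;\le\;K_D(x,y)\;\le\;C_2\,\E_x[\tau_U]\,A_D(z_0,r,y),\qquad U:=D\cap B(z_0,r),
\]
valid on $(D\cap B(z_0,r/2))\times(\overline{D}^c\cap B(z_0,r)^c)$, yields the inequality immediately with constant $C_2^4$: multiplying the upper bounds at $(x_1,y_1)$ and $(x_2,y_2)$ against the reciprocals of the lower bounds at $(x_1,y_2)$ and $(x_2,y_1)$, the $\E_{x_i}[\tau_U]$ and $A_D(z_0,r,y_j)$ factors cancel in pairs.

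For (i), first suppose $X$ is transient and satisfies (A-4). Lemma~\ref{lK2}, applied with $U=D\cap B(z_0,r)$, gives
\[
C_2^{-1}\,\E_x[\tau_U]\,I_u \;\le\; u(x)\;\le\;C_2\,\E_x[\tau_U]\,I_u,\qquad I_u := \int_{B(z_0,r/2)^c} j(|w-z_0|)\,u(w)\,dw,
\]
and the analogous bound for $v$; division and cross-multiplication at $x,y\in D\cap B(z_0,r/2)$ produce $u(x)/v(x)\le C_2^4\,u(y)/v(y)$. For the general case, in which only (A-1)--(A-3) are assumed, I reduce to the above by dimension lifting. Pick any $k\ge 3$, let $\tilde W$ be an independent $k$-dimensional Brownian motion, and set $X':=(W,\tilde W)_S$, a $(d+k)$-dimensional subordinate Brownian motion with the same Laplace exponent $\phi$. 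Since $d+k\ge 3$, $X'$ is automatically transient and (A-4) is vacuous, so the special case applies to $X'$. Extend $u,v$ by $\tilde u(x,y):=u(x)$, $\tilde v(x,y):=v(x)$ on $\R^{d+k}$, and put $U_0:=D\cap B(z_0,r)$, $V:=U_0\times\R^k$, $z_0':=(z_0,0)\in\R^{d+k}$, and $V':=V\cap B(z_0',r)\subset B(z_0',r)$. Because $X'_t\in V\iff X_t\in U_0$, one has $\tau_V(X')=\tau_{U_0}(X)$, so $\tilde u$ is regular harmonic for $X'$ in $V$, and then in $V'\subset V$ by a standard strong-Markov argument. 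Moreover, for $(x,y)\in B(z_0',r)\setminus V'$ the projection $x\in\R^d$ satisfies $|x-z_0|<r$ and $x\notin U_0$, forcing $x\in D^c\cap B(z_0,r)$, so $\tilde u(x,y)=u(x)=0$. Applying the special case of (i) to $X',\tilde u,\tilde v$ with centre $z_0'$ and radius $r$, evaluated at $(x,0),(y,0)$ for $x,y\in D\cap B(z_0,r/2)$, yields the conclusion.

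The main obstacle is the lifting step in the general case of (i): one must verify that $\tilde u$ is regular harmonic for $X'$ in $V'$ and that it vanishes on $B(z_0',r)\setminus V'$. Both follow straightforwardly from the product structure $X'=(X,\tilde W_S)$, the exit-time identity $\tau_V(X')=\tau_{U_0}(X)$, and a standard strong-Markov argument, but they are the essential verifications that make the dimension reduction valid.
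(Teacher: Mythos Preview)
Your proof is correct and follows essentially the same approach as the paper: part (ii) is deduced from the approximate factorization in Corollary~\ref{c:approx-factor}, part (i) under transience and {\bf(A-4)} from Lemma~\ref{lK2}, and the general case of (i) by a dimension argument exploiting that in dimension $\ge 3$ the subordinate Brownian motion is automatically transient and {\bf(A-4)} is vacuous. The only cosmetic difference is the direction of the dimension argument: the paper proves (i) first for $d\ge 3$ and then descends to $d=2$ and $d=1$ one step at a time, whereas you lift from $d$ to $d+k$ in a single step; the underlying mechanism (extend $u,v$ trivially in the extra coordinates and use $\tau_{U_0\times\R^k}(X')=\tau_{U_0}(X)$) is identical.
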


\begin{proof}
Under the assumption of transience and {\bf(A-1)}--{\bf(A-4)} the result
follows from Lemma \ref{lK2} and Corollary \ref{c:approx-factor} (see proof of
\cite[Theorem 1.1]{KSV4}).

If the process $X$ is not transient, we can use argument similar as in the
proof of \cite[Theorem 1.2, p. 17]{KM} where it is shown how to deduce Harnack inequality in dimensions $d=1,2$ from  Harnack inequality in dimension
$d\geq 3$ (since in 
the latter  case the process is always transient). 
Since we will use the argument in the proof of Theorem \ref{t:bhp} 
again, here we provide the detail for the readers' convenience. 

We use the notation $\tilde{x}=(x^1,\ldots,x^{d-1})$
for $x=(x^1,\ldots,x^{d-1},x^d)\in \R^d$ and  $X=((\widetilde{X}_t,X^d_t), \P_{(\widetilde{x}, x^d)})$. As in the proof of \cite[Theorem 1.2, p. 17]{KM}, we have that for every $x^d \in \R$, 
$\widetilde{X}=(\widetilde{X}_t,\P_{\widetilde{x}})$ is a $(d-1)$-dimensional
subordinate Brownian motion with characteristic exponent
$ \widetilde{\Phi}(\tilde{\xi})=\phi(|\tilde{\xi}|^2) $ for 
$\tilde{\xi}\in \R^{d-1}.$
 
Suppose  (i) is true for 
for some $d\geq 2$ and let 
 $D$ be an open subset of $\R^{d-1}$ and   $u, v \colon \R^{d-1} \rightarrow
[0,\infty)$ be  functions that are regular harmonic in $D\cap B(
\wt {x}_0,r)$ 
 with respect to $\widetilde{X}$ and vanish 
   on $D^c \cap B(
\wt {x}_0,r)$   a.e. with respect to $(d-1)$-dimensional Lebesgue measure.

Let $f$ and $g$ $\colon \R^{d} \rightarrow
[0,\infty)$ be defined by 
$$f(\wt x, x^d)=u(\wt x)\quad \text{ and } \quad g(\wt x, x^d)=v(\wt x).$$ 
Since $$\tau_{(B(\wt {x}_0,s) \cap D)\times \R}=\inf\{t>0 : \wt
{X}_t \notin B(\wt {x}_0,s)\cap D \},$$ by the strong Markov property, $f$ and $g$ are regular harmonic in $B(
\wt {x}_0,r) \times \R$ with respect to $X$.    Clearly  $f$ and $g$ vanish 
   on $(B(
\wt {x}_0,r) \times \R) \cap (D \times \R)^c$   a.e. with respect to $d$-dimensional Lebesgue measure.
Thus, 
by applying the result to  $f$ and $g$, we see that there exists a constant $c>0$  such
that for
all $\wt {x}_0 \in \R^{d-1}$, open set $D \subset \R^{d-1}$ and $r\in (0,1)$
	\[
		\frac{u(\wt {x}_1)}{v(\wt {x}_1)}=\frac{f((\wt {x}_1, 0))}{g((\wt {x}_1, 0))}\leq c\, \frac{f((\wt {x}_2, 0))}{g((\wt {x}_2, 0))}=
c\ \frac{u(\wt {x}_2)}{v(\wt {x}_2)} \ \text{ for all }\ \wt{x}_1,\wt{x}_2\in
 D\cap B(\wt {x}_0,\tfrac{r}{2}).
	\]

Applying this argument first to $d=3$ and then to $d=2$, we finish the proof of
the theorem.
\end{proof}

\section{Green function estimates on bounded Lipschitz domain}\label{s:3}

The purpose of this section is to establish sharp two-sided Green function
estimates for $X$
in any bounded Lipschitz domain $D$ of $\R^d$. 

Recall that we have assumed that $X=(X_t,\P_x)$ is the
subordinate process
 defined by $X_t=W_{S_t}$ where $W=(W_t,\P_x)$ is  a Brownian motion in $\R^d$
independent of the  subordinator $S$ and 
the Laplace exponent $\phi$ of the  subordinator $S$
satisfies {\bf(A-1)}--{\bf(A-3)}. In this section we further assume that $X$ is transient and  that
{\bf(A-4)} also holds. 

We will first establish the interior estimates using Proposition
\ref{prop:pot-alpha0} and Theorem 
\ref{T:Har}. As in \cite{KSV2}, once we have the interior estimates, we can
apply
Theorem \ref{T:Har} and
the boundary Harnack principle (Theorem \ref{UBHP}), and use the
arguments of \cite{B3, H} to get the full estimates
for bounded Lipschitz domain $D$.

\begin{lemma}\label{GI}
For every bounded domain $D\subset \R^d$, there exists a constant
$C_2=C_2(d,\phi,$ $\text{diam}(D))>0$ such that 
  \begin{equation}\label{ub}
 G_D(x,y) \le C_3 \frac{|x-y|^{-d-2}\phi'(|x-y|^{-2})}{\phi(|x-y|^{-2})^2}\,
\qquad
 \mbox{ for all }x,y \in D\, ,
  \end{equation} and 
for all $x, y \in D$ with
$b_2^{-1}|x-y| \le
\delta_D(x) \wedge \delta_D(y)$
\begin{equation}\label{lb}
G_D(x,y) \,\ge\, C_3^{-1}\,
\frac{|x-y|^{-d-2}\phi'(|x-y|^{-2})}{\phi(|x-y|^{-2})^2}
\end{equation}
where $b_2\in (0,\frac{1}{2})$ is the constant 
from Proposition \ref{p:green}. 
\end{lemma}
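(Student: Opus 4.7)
The strategy splits into an upper-bound part, valid for all $x,y\in D$, and a lower-bound part, valid only under the stated geometric hypothesis on $\delta_D$.

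For the upper bound I would invoke domain monotonicity $G_D(x,y)\le G(x,y)=g(|x-y|)$ together with the asymptotic of Proposition~\ref{prop:pot-alpha0}, which delivers $g(r)\le c_0\, r^{-d-2}\phi'(r^{-2})\phi(r^{-2})^{-2}$ for $r$ below some threshold $r_0>0$. On the complementary range $r\in[r_0,\text{diam}(D)]$ the decreasing function $g$ is bounded above while the right-hand side of \eqref{ub} is continuous and strictly positive, so enlarging the constant (which is allowed to depend on $\text{diam}(D)$) finishes \eqref{ub}.

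For the lower bound I would fix $x,y\in D$ with $b_2^{-1}|x-y|\le \delta_D(x)\wedge\delta_D(y)$, set $r:=b_2^{-1}|x-y|$ and choose centre $x_0:=x$. Under the auxiliary restriction $r\in(0,1)$ all hypotheses of Proposition~\ref{p:green} are met: $B(x_0,r)\subset D$ because $r\le\delta_D(x)$; $x\in B(x_0,b_1 r)$ trivially; and $y\in B(x_0,r)\setminus B(x_0,b_2 r)$ because $|y-x_0|=b_2 r$. Proposition~\ref{p:green} then yields
\[
G_D(x,y)\;\ge\; G_{B(x_0,r)}(x,y)\;\ge\; c_1\,\frac{r^{-d-2}\phi'(r^{-2})}{\phi(r^{-2})}\,\E_y[\tau_{B(x_0,r)}].
\]

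The crux is then a lower bound on $\E_y[\tau_{B(x_0,r)}]$. Since $|y-x_0|=b_2 r$ is not small, a direct application of Proposition~\ref{p:exittime} centred at $x_0$ is impossible, but the hypothesis $\delta_D(y)\ge r$ combined with $|y-x_0|=b_2 r$ forces $B(y,(1-b_2)r)\subset B(x_0,r)$. Applying Proposition~\ref{p:exittime} at centre $y$ to this sub-ball and absorbing the factor $(1-b_2)^{-2}\ge 1$ via \eqref{e:Berall} gives $\E_y[\tau_{B(x_0,r)}]\ge c_2\,\phi(r^{-2})^{-1}$. Combining with the previous display and using Corollary~\ref{c:new1} to pass from $r$ to $|x-y|=b_2 r$ produces \eqref{lb}. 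The residual regime $r\ge 1$ (possible only when $\text{diam}(D)>b_2$) I would dispatch by compactness and Harnack chaining: the admissible pairs $(x,y)$ with $|x-y|\ge b_2$ lie in a fixed compact subset of $D\times D$ off the diagonal, and Theorem~\ref{T:Har} together with the small-scale estimate just proved produces a uniform positive lower bound on $G_D$ while the right-hand side of \eqref{lb} remains bounded above on that regime. I expect the sub-ball trick for $\E_y[\tau_{B(x_0,r)}]$ to be the main technical point: $y$ is genuinely close to $\partial B(x_0,r)$ relative to its radius, so any naive attempt to apply Proposition~\ref{p:exittime} centred at $x_0$ would fail.
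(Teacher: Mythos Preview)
Your argument is correct and follows the same skeleton as the paper: domain monotonicity plus Proposition~\ref{prop:pot-alpha0} for \eqref{ub}, and for \eqref{lb} the split into $|x-y|\le b_2$ (where Proposition~\ref{p:green} is applied on $B(x,b_2^{-1}|x-y|)$) versus $|x-y|>b_2$.

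Two points of comparison are worth recording. First, in the small-scale case the paper's display reads $\E_x[\tau_{B(x,b_2^{-1}|x-y|)}]$ and invokes Proposition~\ref{p:exittime} directly; but Proposition~\ref{p:green} literally produces $\E_y[\tau_{B}]$ with $y$ on the inner boundary of the annulus, so your sub-ball observation $B(y,(1-b_2)r)\subset B(x_0,r)$ together with Proposition~\ref{p:exittime} centred at $y$ is exactly the missing justification. Second, for $|x-y|>b_2$ the paper does not use compactness: it picks $x_0\in\partial B(y,b_2)$, applies the small-scale estimate to the pair $(x_0,y)$, and then transfers to $x$ via the Harnack inequality combined with the explicit lower bound on $\P_x(X_{\tau_{B(x,b_2/4)}}\in B(x_0,b_2/4))$ from Proposition~\ref{p:Poisson1}. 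Your compactness argument (both endpoints have $\delta_D\ge 1$, hence lie in a fixed compact set, and $G_D$ is continuous and strictly positive off the diagonal) is a legitimate shortcut, but note that it relies implicitly on $D$ being a \emph{domain} (connected) so that Harnack chains exist; the paper's computation is more constructive and gives an explicit dependence of the constant on $\mathrm{diam}(D)$ through $j(\mathrm{diam}(D))$.
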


\pf
 Since $G_D(x,y) \le g(|x-y|)$ and $D$ is bounded,  
 \eqref{ub} is an immediate consequence of Proposition \ref{prop:pot-alpha0}.
 
 Now we show \eqref{lb}. 
We have two cases:

Case 1: $|x-y|\leq b_2$.

Since $B(x,
b_2^{-1}|x-y|)\subset D$ and  $y\in A(x,|x-y|,b_2^{-1}|x-y|)$, we can
use Proposition \ref{p:green} to get 
\begin{align*}
 G_D(x,y)&\geq G_{B(x,b_2^{-1}|x-y|)}(x,y)\geq
c_1\tfrac{
b_2^{d+2}|x-y|^{-d-2}\phi'(b_2^{2}|x-y|^{-2})}{\phi(b_2^{2}|x-y|^
{ -2})}\E_x[\tau_{B(x,b_2^{-1}|x-y|)}]\\
&\geq c_2\tfrac{|x-y|^{-d-2}\phi'(|x-y|^{-2})}{\phi(|x-y|^{-2})^2},
\end{align*}
where in the last inequality we have used Proposition \ref{p:exittime}, 
{\bf (A-3)} and the facts that $b_2\in (0,\frac{1}{2})$ and that the function $r\mapsto
\frac{1}{\phi(r)}$ is decreasing. 

Case 2: $|x-y|> b_2$.

In this case it follows that $\delta_D(x)\wedge \delta_D(y)> 1$. 
Let $x_0\in \partial B(y,b_2)$. 
Then \[b_2^{-1}|x_0-y|=1< \delta_D(x)\wedge \delta_D(y)\]
and so, by the Case 1, we obtain
\begin{equation}
 G_D(x_0,y)\geq c_2\tfrac{b_2^{-d-2}\phi'(b_2^{-2})}{\phi(b_2^{-2})^2}.
\label{eq:green_d01}
\end{equation}

Since $G_D(\cdot ,y)$ is harmonic in $B(x_0,\frac{b_2}{2})\cup
B(x,\frac{b_2}{2})$ (with respect to $X$), we can use 
Proposition \ref{T:Har} to deduce
\begin{align}
 G_D(x,y)&=\E_x[G_D(X_{\tau_{B(x,b_2/4)}},y)]\geq
\E_x[G_D(X_{\tau_{B(x,b_2/4)}},y);X_{\tau_{B(x,b_2/4)}}\in
B(x_0,\tfrac{b_2}{4})]\nonumber\\
&\geq c_3G_D(x_0,y)\P_x(X_{\tau_{B(x,b_2/4)}}\in
B(x_0,\tfrac{b_2}{4}))\,.\label{eq:green_d02}
\end{align}
By Proposition \ref{p:Poisson1}, (\ref{eq:sub-105}) and (\ref{eq:sub-10}) we get
\begin{align}
 \P_x(X_{\tau_{B(x,b_2/4)}}\in
B(x_0,\tfrac{b_2}{4}))&=\int\limits_{B(x_0,\tfrac{b_2}{4})}K_{
B(x , \frac{b_2}{4})}(x,z)\,dz\nonumber\\
& \geq
\tfrac{c_4}{\phi(16b_2^{-2})}\int\limits_{B(x_0,\tfrac{b_2}{4})}j(|z-x|)\,dz.
\label{eq:green_d03}
\end{align}
Since 
$|z-x|\leq \text{diam}(D)$,
by the monotonicity of $j$ we deduce
\[
 \P_x(X_{\tau_{B(x,b_2/4)}}\in
B(x_0,\tfrac{b_2}{4}))\geq
c_5\tfrac{b_2^dj\left(
\text{diam}(D)\right)}{\phi(16b_2^{-2})}\,
.
\]
Therefore, using (\ref{eq:green_d01})--(\ref{eq:green_d03}) we conclude that
$$ G_D(x,y)\geq c_6 \geq c_7 \tfrac{|x-y|^{-d-2}\phi'(|x-y|^{-2})}{\phi(|x-y|^{-2})^2}.$$ 
In the last inequality we use the fact that $b_2<|x-y|\leq \text{diam}(D)$ and Corollary \ref{c:new1}. 
\qed

An open set $D$ is said to be Lipschitz
domain
if there is a localization radius
$R_1>0$  and a constant
$\Lambda >0$
such that
for every $z\in \partial D$, there is a
Lipschitz  function
$\phi_z: \R^{d-1}\to \R$ satisfying
\[| \phi_z (x)- \phi_z (w)| \leq \Lambda
|x-w| ,\] and an orthonormal coordinate
system $CS_z$ with origin at $z$ such that
$$
B(z, R_1)\cap D=B(z, R_1)\cap \{ y=
(\tilde y, y_d)\mbox{ in } CS_z: y_d > \phi_z (\tilde y) \}.
$$
The pair $(R_1, \Lambda)$ is called the
characteristics of the Lipschitz domain $D$.

Unlike \cite{KSV2} we will assume that $D$ is a bounded 
Lipschitz domain $D$ instead of $\kappa$-fat open set. 
The main reason we assume that $D$ is a bounded 
Lipschitz domain $D$ is Theorem \ref{T:Har} and the Harnack chain argument. 
Note that in \cite{KSV2}, \cite[Theorem 2.14]{KSV2} is used instead of   Theorem \ref{T:Har} and the  Harnack chain argument.  Unfortunately, it seems that, under our assumptions, the such result is not true for certain harmonic function like $u(x):=\P_{x}(X_{\tau_{B(x_1,r)}}
\in B(x_0, r) )$ when distance between $x_0$ and $x_1$ is large and $r$ is small.

\begin{lemma}\label{G:g3}
For every $L >0$ and bounded 
Lipschitz domain $D$ with the 
characteristics
 $(R_1,\Lambda)$, 
 there exists $c=c(L, d, \phi, R_1, \Lambda, \text{diam}(D))>0$ such
that for every $x,y\in D$ with  $|x-y| \le L ( \delta_D(x) \wedge \delta_D(y))
$,
\begin{equation}\label{e:g3}
G_D(x,y) \ge c \frac{|x-y|^{-d-2}\phi'(|x-y|^{-2})}{\phi(|x-y|^{-2})^2}.
\end{equation}
\end{lemma}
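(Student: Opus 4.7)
Set $r:=|x-y|$. If $L\le b_2^{-1}$, the hypothesis gives $b_2^{-1}r\le \delta_D(x)\wedge\delta_D(y)$ and the estimate follows directly from \eqref{lb} in Lemma \ref{GI}. So we may assume $L>b_2^{-1}$, in which case the hypothesis only yields $r\le L(\delta_D(x)\wedge\delta_D(y))$, with $r$ possibly much larger than $\delta_D(x)$ and $\delta_D(y)$ scaled by $b_2$. Since $D$ is bounded, $r\le\text{diam}(D)$. The strategy is to produce, via a Harnack chain, a comparison $G_D(x,y)\ge c\, G_D(y^*,y)$ for an auxiliary point $y^*\in D$ to which Lemma \ref{GI} applies at the scale $r$.

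First, using the interior cone property of the Lipschitz domain $D$ with characteristics $(R_1,\Lambda)$, I pick $y^*\in D$ with $|y^*-y|=\eta r$ and $\delta_D(y^*)\ge b_2^{-1}\eta r$, where $\eta=\eta(L,R_1,\Lambda,\text{diam}(D))\in(0,1/4)$ is chosen small. Concretely, one moves from $y$ along the axis of an interior Lipschitz cone rooted at the boundary point closest to $y$ (or in any fixed direction when $y$ is far from $\partial D$); the opening angle, controlled by $\Lambda$, guarantees that $\delta_D$ grows at a linear rate along this axis, so an appropriate choice of $\eta$ yields both conditions. Applying \eqref{lb} in Lemma \ref{GI} to the pair $(y^*,y)$ and then Corollary \ref{c:new1} (using $|y^*-y|=\eta r\asymp r$) gives
\[
G_D(y^*,y)\ge c_1\,\frac{r^{-d-2}\phi'(r^{-2})}{\phi(r^{-2})^2},
\]
where $c_1$ depends on the listed parameters.

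It remains to show $G_D(x,y)\ge c_2\, G_D(y^*,y)$. Both $x$ and $y^*$ lie at distance at least $c_3 r$ from $\partial D$ (from $\delta_D(x)\ge r/L$ and $\delta_D(y^*)\ge b_2^{-1}\eta r$), both are at distance at least $(1-\eta)r$ from $y$, and $|x-y^*|\le(1+\eta)r$. Standard Lipschitz-domain geometry therefore provides a Harnack chain of balls $B(z_0,\rho),\ldots,B(z_N,\rho)$ with $z_0=x$, $z_N=y^*$, $|z_{i+1}-z_i|\le\rho/2$, and each $B(z_i,\rho)\subset D\setminus\{y\}$, where $\rho\asymp r$ (further subdivided if $\rho\ge 1$ to fit the hypothesis $\rho\in(0,1)$ of Theorem \ref{T:Har}) and the number $N$ of balls is bounded by a constant depending only on $L,R_1,\Lambda,\text{diam}(D)$. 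Applying Theorem \ref{T:Har} iteratively to the nonnegative function $z\mapsto G_D(z,y)$, which is harmonic on $D\setminus\{y\}$, we obtain $G_D(x,y)\ge c_0^N G_D(y^*,y)$, and combining with the previous display finishes the proof.

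The main obstacle is the geometric construction of the chain in the Lipschitz domain $D$: the chain must simultaneously stay inside $D$, avoid the singular point $y$, use balls of radius $\asymp r$, and have length bounded in terms of the stated parameters. The choice of $y^*$ at distance $\eta r$ from $y$ with $\eta$ sufficiently small (depending on $L$ and on the Lipschitz characteristics) is exactly what allows $\rho\asymp r$ to simultaneously satisfy $\rho<\delta_D(z_i)$ and $\rho<|z_i-y|$ along the chain.
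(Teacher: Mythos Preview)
Your overall strategy matches the paper's: pick an auxiliary point, apply Lemma~\ref{GI} there, transfer the estimate via a Harnack chain using Theorem~\ref{T:Har}, and adjust scales with Corollary~\ref{c:new1}. Two issues, though.

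First, a minor slip: the condition in \eqref{lb} is $b_2^{-1}|x-y|\le \delta_D(x)\wedge\delta_D(y)$, i.e.\ $|x-y|\le b_2\,(\delta_D(x)\wedge\delta_D(y))$. So your reduction should read ``if $L\le b_2$'' rather than ``if $L\le b_2^{-1}$''. This is harmless, since your main argument covers all $L$ anyway.

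More seriously, your Harnack-chain step is inconsistent. You set $|y^*-y|=\eta r$ with $\eta<1/4$ small, and then assert that both $x$ and $y^*$ are at distance $\ge(1-\eta)r$ from $y$; but $|y^*-y|=\eta r\ll (1-\eta)r$. Consequently a ball of radius $\rho\asymp r$ centered near $y^*$ will contain the pole $y$, and Theorem~\ref{T:Har} cannot be applied to $G_D(\cdot,y)$ on that ball. The constraint $|y^*-y|\le b_2\delta_D(y)\le (b_2/L)r$ needed for Lemma~\ref{GI} forces $y^*$ close to $y$, so this clash is not just a typo. It can be repaired by shrinking the balls near $y^*$ to radius $\asymp \eta r$ (the chain length then grows like $1/\eta$, still bounded in terms of $L$), but your ``single $\rho\asymp r$'' description does not work.

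The paper sidesteps this by placing the auxiliary point on the other side: it takes $w\in\partial B(x,b_2\delta_D(x))$, applies Lemma~\ref{GI} to $(x,w)$, and then runs the Harnack chain for $G_D(x,\cdot)$ from $w$ to $y$. In the regime $b_2\delta_D(x)\le|x-y|\le L\delta_D(x)$ both $w$ and $y$ sit at distance $\ge b_2\delta_D(x)$ from the pole $x$ and at distance $\gtrsim \delta_D(x)$ from $\partial D$, while $|w-y|\le (L+1)\delta_D(x)$; so balls of a single radius $\asymp \delta_D(x)$ suffice for the whole chain. No cone construction is needed, and the awkward approach to the singularity is avoided.
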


\pf 
By symmetry of $G_D$ we may assume $\delta_D(x) \le \delta_D(y)$.
Moreover, by Lemma \ref{GI} we can assume that
$L > b_2$ 
and 
so
we only need to show \eqref{e:g3} for $b_2
\delta_D(x) \le |x-y| \le L \delta_D(x)$.

Choose a point $w \in \partial B(x , b_2\delta_D(x))$. Then Lemma \ref{GI} gives
$$
G_D(x,w) \,\ge\, c_1
\frac{(b_2\delta_D(x))^{-d-2}\phi'((b_2\delta_D(x))^{-2})}{
\phi((b_2\delta_D(x))^{-2})^2}.
$$
Since $|y-w| \le |x-y|+|x-w| \le (L+1) \delta_D(x)$ and
$G_D(x,\,\cdot\,)=G_D(\,\cdot\,, x)$ is harmonic with respect to $X$ in
$B(y, b_2\delta_D(x)) \cup B(w, b_2\delta_D(x))$, using the assumption that
$D$ is a bounded 
 Lipschitz domain,
Theorem \ref{T:Har} and  
 the Harnack chain argument we obtain
\begin{eqnarray*}
G_D(x,y)  \ge c_2 G_D(x,w)
\ge c_3
\frac{(b_2\delta_D(x))^{-d-2}\phi'((b_2\delta_D(x))^{-2})}{
\phi((b_2\delta_D(x))^{-2})^2}.
\end{eqnarray*}
by 
Corollary \ref{c:new1}
\begin{eqnarray*}
G_D(x,y) & \ge &c_2 G_D(x,w)
\ge c_3
\frac{(b_2\delta_D(x))^{-d-2}\phi'((b_2\delta_D(x))^{-2})}{
\phi((b_2\delta_D(x))^{-2})^2} \nonumber\\
& \ge & c_4  \frac{|x-y|^{-d-2}\phi'(|x-y|^{-2})}{\phi(|x-y|^{-2})^2}.
\end{eqnarray*}
\qed

For the remainder of this section, we assume that $D$ is a bounded
Lipschitz domain with characteristics $(R_1, \Lambda)$.

Without loss of generality we may assume that $R_1\le \frac{1}{4}$.
Since $D$ is Lipschitz, there exists $\kappa=\kappa(\Lambda)\in (0, \frac{1}{2})$ such
that  for each $Q \in \partial D$ and $r \in (0, R_1)$, there exists a point 
\[A_r(Q)\in D \cap B(Q,r)\ \text{ satisfying }\ B(A_r(Q),\kappa r)
\subset D \cap B(Q,r)\,.\]

Recall that $G_D(\cdot, y)$ is
regular harmonic in $D\setminus \overline{B(y,\varepsilon)}$ for
every $\varepsilon >0$ and vanishes outside $D$.
 
Fix $z_0 \in D$ with $\kappa R_1  < \delta_D(z_0) < R_1$
and set $\eps_1:=  \frac{\kappa R_1}{24}$. Define
\[
 r(x,y): = \delta_D(x) \vee \delta_D(y)\vee |x-y| ,\ x,y\in D
\]
and
\begin{equation} \label{d:gz1}
\BB(x,y):=
\begin{cases}
\left\{ A \in D:\, \delta_D(A) > \frac{\kappa}{2}r(x,y), \,
|x-A|\vee
|y-A| < 5 r(x,y)  \right\}& \text{ if } r(x,y) <\eps_1 \\
\{z_0 \}& \text{ if } r(x,y) \ge \eps_1 .
\end{cases}
\end{equation}
Note that for every $(x,y) \in D \times D$ with $r(x,y) <\eps_1$
\begin{equation}\label{e:ar}
\tfrac16 \delta_D(A) \,\le\,r(x,y)\, \le\, 2 \kappa^{-1}  \delta_D(A), \qquad A
\in \BB(x,y).
\end{equation}
Set
$$
C_{4}:=
C_3 \text{diam}(D) 
(\tfrac{\delta_D(z_0)}{2})^{-d-3}
\frac{\phi'((\tfrac{\delta_D(z_0)}{2})^{-2})}{\phi((\tfrac{\delta_D(z_0)}{2})^{
-2 } )^2 }
\, .
$$
By \eqref{ub} and 
Corollary \ref{c:new1} (with $a=2\text{diam}(D) /\delta_D(z_0)$ and $b=1$) we see that 
\[G_D(x, z_0)\leq C_4 \ \text { for }\ \ x\in D \setminus B(z_0,
\tfrac{\delta_D(z_0)}{2}).
\]
Now we define
\begin{equation}\label{d:gz0} g_D(x ):=  G_D(x, z_0) \wedge
C_4.
\end{equation}
We note that for $\delta_D(z) \le 6 \eps_1$, 
\[
 g_D(z )= G_D(z, z_0), 
\]
since $6\eps_1<\frac{\delta_D(z_0)}{4}$ and thus 
$|z-z_0| \ge \delta_D(z_0) - 6 \eps_1 \ge \frac{\delta_D(z_0)}{2}$.

The following lemma follows
 from Theorem \ref{T:Har} and the standard
Harnack
chain argument:

\begin{lemma}\label{G:g2}
There exists $c>1$ such that for every $x \in D$ satisfying
$\delta_D(x)\ge\frac{ \kappa^3\eps_1}{64}$ we have 
\[c^{-1}  \le g_D(x) \le c\,.\]
\end{lemma}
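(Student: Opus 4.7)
The upper bound $g_D(x)\le C_4$ is immediate from the definition \eqref{d:gz0}, so the task reduces to producing a uniform positive lower bound. The plan is to combine the interior lower bound \eqref{lb} of Lemma \ref{GI} at a reference point close to $z_0$ with iterated applications of Harnack's inequality (Theorem \ref{T:Har}) along a chain of balls.

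First I would fix a reference point $w_0\in D$ with $|w_0-z_0|=b_2\,\delta_D(z_0)/2$. The triangle inequality gives $\delta_D(w_0)\ge \delta_D(z_0)(1-b_2/2)>\delta_D(z_0)/2$, so $b_2^{-1}|w_0-z_0|=\delta_D(z_0)/2\le \delta_D(w_0)\wedge \delta_D(z_0)$, and hence \eqref{lb} applies and yields a constant $c_0=c_0(\phi,b_2,\delta_D(z_0))>0$ with $G_D(w_0,z_0)\ge c_0$.

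Next, fix any $x\in D$ with $\delta_D(x)\ge \rho:=\kappa^3\eps_1/64$. Because $\delta_D(z_0)\ge \kappa R_1=24\eps_1$ while $\rho\le \kappa^3 R_1/1536$, the quantity $\rho$ is much smaller than $\delta_D(z_0)$, so both $x$ and $w_0$ lie in the open set $U:=\{y\in D:\delta_D(y)>\rho'\}\setminus \overline{B(z_0,\rho')}$ with $\rho':=\tfrac12\min(\rho,\delta_D(z_0)/4)$. Since $D$ is a bounded Lipschitz domain with characteristics $(R_1,\Lambda)$, a standard construction yields a chain $x=x_0,x_1,\dots,x_N=w_0$ with $|x_{i+1}-x_i|\le r_\star/2$ and $B(x_i,r_\star)\subset D\setminus\{z_0\}$, where $r_\star\in(0,1)$ and $N$ depend only on $\mathrm{diam}(D),R_1,\Lambda,\kappa,\eps_1$. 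Since $G_D(\cdot,z_0)$ is nonnegative and harmonic on each $B(x_i,r_\star)$, iterating Theorem \ref{T:Har} along the chain produces a constant $c_H^{-N}$ with $G_D(x,z_0)\ge c_H^{-N}G_D(w_0,z_0)\ge c_H^{-N}c_0=:c_*>0$, and therefore $g_D(x)=G_D(x,z_0)\wedge C_4\ge c_*\wedge C_4$, giving the required lower bound.

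The principal (and essentially only) nontrivial point is the uniform existence of such a Harnack chain of bounded length that avoids the single point $z_0$. For a bounded Lipschitz domain this is classical: the set $\{y\in D:\delta_D(y)\ge \rho'\}$ has compact closure and can be covered by a finite number of balls of radius $\rho'/8$, which yields an a priori bound on the number of hops needed, and the point $z_0$ can be routed around at the cost of only finitely many extra balls. Once this is in hand, the bound reduces to a fixed finite iteration of Theorem \ref{T:Har}, and all constants are allowed to depend on the stated domain and process parameters.
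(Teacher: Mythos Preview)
Your argument is correct and follows exactly the approach the paper indicates (the paper merely states that the lemma ``follows from Theorem~\ref{T:Har} and the standard Harnack chain argument'' and gives no further details). One minor point: your choice $\rho'=\tfrac12\min(\rho,\delta_D(z_0)/4)$ need not guarantee $w_0\notin\overline{B(z_0,\rho')}$ when $b_2<1/4$, since $|w_0-z_0|=b_2\delta_D(z_0)/2$; replacing $\delta_D(z_0)/4$ by $b_2\delta_D(z_0)/4$ in the definition of $\rho'$ fixes this and leaves the rest of the argument intact.
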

\medskip

\begin{thm}\label{t:Gest}
Suppose $X$ is transient and $\phi$
satisfies {\bf(A-1)}--{\bf(A-4)}. If $D$ is a
bounded Lipschitz domain with characteristics $(R_1, \Lambda)$,
then there exists
$c=c($diam$(D), R_1, \Lambda, \phi)>1$
such that for every $x, y \in D$ and $A \in \BB(x,y)$
\begin{equation}\label{e:Gest}
c^{-1}\,\tfrac{g_D(x) g_D(y)\phi'(|x-y|^{-2})}{g_D(A)^2|x-y|^{d+2}\phi(|x-y|^{-2})^2}\,\le\,
G_D(x,y) \,\le\, c\,\tfrac{g_D(x) g_D(y)\phi'(|x-y|^{-2})}{g_D(A)^2|x-y|^{d+2}\phi(|x-y|^{-2})^2},
\end{equation}
where 
$g_D$ and $\BB(x,y)$ are defined by \eqref{d:gz0} and
\eqref{d:gz1} respectively.
\end{thm}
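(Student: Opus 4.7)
The plan is to follow the roadmap of \cite{KSV2}, which in turn adapts the Bogdan--Hansen framework of \cite{B3,H}, combining the interior estimates (Lemmas \ref{GI} and \ref{G:g3}), the boundary Harnack principle (Theorem \ref{UBHP}), the Harnack inequality (Theorem \ref{T:Har}), and the boundedness/Harnack-chain properties of $g_D$ (Lemma \ref{G:g2}). The estimate \eqref{e:Gest} is invariant under the simultaneous replacement of $A$ by any other $A' \in \BB(x,y)$ (because a standard Harnack chain along points of distance $\asymp r(x,y)$ from $\partial D$ yields $g_D(A) \asymp g_D(A')$), so I may freely choose a convenient $A$.

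\textbf{Interior regime.} I would first dispose of the case $|x-y| \le L(\delta_D(x) \wedge \delta_D(y))$ for a fixed $L$ (to be chosen depending on $\kappa$). Then $x,y$ and every $A \in \BB(x,y)$ all lie at distance $\asymp r(x,y)$ from $\partial D$, so by a Harnack chain of controlled length together with Lemma \ref{G:g2} one obtains $g_D(x) \asymp g_D(y) \asymp g_D(A)$. Consequently \eqref{e:Gest} reduces to the two-sided bound
\[
G_D(x,y) \asymp \frac{|x-y|^{-d-2}\phi'(|x-y|^{-2})}{\phi(|x-y|^{-2})^2},
\]
which is precisely Lemma \ref{GI} (upper bound) together with Lemma \ref{G:g3} (lower bound, applicable because $|x-y| \le L(\delta_D(x) \wedge \delta_D(y))$).

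\textbf{Boundary regime.} Now suppose $|x-y| > L(\delta_D(x) \wedge \delta_D(y))$; by symmetry assume $\delta_D(x) \le \delta_D(y)$, so $\delta_D(x) \ll r(x,y)$. Pick $Q_x \in \partial D$ with $|x - Q_x| = \delta_D(x)$, and let $A_x := A_{c r(x,y)}(Q_x)$ be the Lipschitz-corkscrew point at scale $c r(x,y)$, with $c$ chosen so that $A_x \in \BB(x,y)$. Both $G_D(\cdot,y)$ and $g_D(\cdot)$ are regular harmonic in $D \cap B(Q_x, \rho)$ for an appropriate $\rho \asymp r(x,y)$ (with $y$ and $z_0$ outside this ball) and vanish on $D^c \cap B(Q_x,\rho)$. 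Applying Theorem \ref{UBHP}(i) gives
\[
\frac{G_D(x,y)}{g_D(x)} \asymp \frac{G_D(A_x,y)}{g_D(A_x)}.
\]
If additionally $\delta_D(y)$ is small compared to $|x-y|$, one repeats the argument at $y$ with a point $A_y \in \BB(x,y)$ near $Q_y$, obtaining
\[
G_D(x,y) \asymp \frac{g_D(x)\,g_D(y)}{g_D(A_x)\,g_D(A_y)}\, G_D(A_x,A_y);
\]
if $\delta_D(y) \gtrsim r(x,y)$ one skips this second step. In either case the pair $(A_x,A_y)$ satisfies the interior hypothesis $|A_x-A_y| \asymp r(x,y) \le L'(\delta_D(A_x)\wedge \delta_D(A_y))$, so the interior estimate applies; combined with $g_D(A_x) \asymp g_D(A_y) \asymp g_D(A)$ from the Harnack chain among points in $\BB(x,y)$ and Corollary \ref{c:new1} to absorb the factor $|A_x-A_y|/|x-y| \asymp 1$, this yields \eqref{e:Gest}. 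The separate case $r(x,y) \ge \eps_1$ (where $A = z_0$ by definition) is handled by the same BHP step: all relevant distances are now bounded below by $\eps_1$, so $G_D(A_x,A_y) \asymp 1$ and $g_D(A_x)\asymp g_D(A_y) \asymp g_D(z_0) \asymp 1$ by Lemma \ref{G:g2}.

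\textbf{Main obstacle.} The most delicate point is not any single estimate but the verification that the BHP can actually be invoked with a scale $r \asymp r(x,y)$: one must check that the ball $B(Q_x,\rho)$ is small enough to keep $y$ (resp.\ $z_0$) in its complement while large enough that the corkscrew point $A_x$ lies in $D \cap B(Q_x, \rho/2)$ and in $\BB(x,y)$, using only the Lipschitz characteristics $(R_1,\Lambda)$. This requires a careful choice of the constants (tied to $\kappa$, $\eps_1$ and $L$) and a Harnack chain argument of length depending only on $\mathrm{diam}(D)$, $R_1$, $\Lambda$ to transfer $g_D(A_x) \asymp g_D(A)$ uniformly. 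Once the constants are fixed, the remaining steps are essentially bookkeeping: piecing together the BHP comparisons with the interior bounds and Corollary \ref{c:new1} to replace $\phi'/\phi^2$ evaluated at $|A_x-A_y|^{-2}$ by the same quantity at $|x-y|^{-2}$.
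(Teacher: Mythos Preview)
Your proposal is correct and follows essentially the same approach as the paper: the paper also adapts the Bogdan--Hansen framework of \cite{B3,H}, applying Theorem \ref{UBHP} at corkscrew points $x_1=A_{\kappa r/2}(Q_x)$, $y_1=A_{\kappa r/2}(Q_y)$ (your $A_x,A_y$) to reduce to the interior estimate from Lemmas \ref{GI} and \ref{G:g3}, then using Corollary \ref{c:new1} to pass from $|x_1-y_1|$ to $|x-y|$ and Theorem \ref{T:Har}/Lemma \ref{G:g2} to control the $g_D$ factors. The paper only details the case $\delta_D(x)\le\delta_D(y)\le\tfrac{\kappa}{4}|x-y|$ and fixes the scale as $r=\tfrac12(|x-y|\wedge\eps_1)$, which is exactly the verification you flagged as the main obstacle.
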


\pf
Since the proof is an adaptation of the proofs of
\cite[Proposition 6]{B3} and \cite[Theorem 2.4]{H}, we only give the
proof when $\delta_D(x) \le \delta_D(y)\le \frac{\kappa}{4} |x-y|$. In this
case, we have $r(x,y)=|x-y|$

By Theorem \ref{T:Har},  we see that for all $x, y \in D$ and $A_1, A_2 \in
\BB(x,y)$,
\[g_D(A_1)\ \text{ is comparable to }\  g_D(A_2)\,.
\]

Set $r= \frac{|x-y| \wedge
\eps_1}{2}$ and choose 
\[
 Q_x, Q_y \in \partial D\ \ \text{ with }\ \ |Q_x-x|
=\delta_D(x)\ \ \text{ and }\ \ |Q_y-y| =\delta_D(y)\,.
\]
Pick points $x_1=A_{\kappa
r/2}(Q_x)$ and $y_1=A_{\kappa r/2}(Q_y)$ so that \[x, x_1 \in B(Q_x,
\kappa r/2)\ \ \text{ and }\ \ y, y_1 \in B(Q_y, \kappa r/2)\,.\] Then one can
easily check that 
$|z_0-Q_x| \ge \kappa r$  and $|y-Q_x| \ge r$.

Then Theorem \ref{UBHP} implies
$$
c_1^{-1}\,  \frac{G_D(x_1,y)}{g_D(x_1)}\,\le\, \frac{G_D(x,y)}{g_D(x)}
\,\le\, c_1 \frac{G_D(x_1,y)}{g_D(x_1)}
$$
for some $c_1>1$. 
 
Also, since  $|z_0-Q_y| \ge r$ and
$|x_1-Q_y| \ge r$, by  Theorem \ref{UBHP} again,
$$
c_1^{-1}\,\frac{G_D(x_1,y_1)}{g_D(y_1)} \,\le\,
\frac{G_D(x_1,y)}{g_D(y)} \,\le\, c_1 \frac{G_D(x_1,y_1)}{g_D(y_1)}.
$$
Therefore
$$
c_1^{-2}\,\frac{G_D(x_1,y_1)}{g_D(x_1)g_D(y_1)}  \,\le\,
\frac{G_D(x,y)}{g_D(x)g_D(y)} \,\le\, c_1^2\frac{G_D(x_1,y_1)}
{g_D(x_1)g_D(y_1)}.
$$

Now we can use Lemma \ref{G:g3} for the lower and Lemma \ref{GI} for the upper bound  to get
\begin{align}
\tfrac{c_2^{-1} c_1^{-2}}{g_D(x_1)g_D(y_1)}
\tfrac{|x_1-y_1|^{-d-2}\phi'(|x_1-y_1|^{-2})}{\phi(|x_1-y_1|^{-2})^2}
  \,\le\,
\frac{G_D(x,y)}{g_D(x)g_D(y)}
 &\le\, \tfrac{c_2
c_1^2}{g_D(x_1)g_D(y_1)}\tfrac{|x_1-y_1|^{-d-2}\phi'(|x_1-y_1|^{-2})}{
\phi(|x_1-y_1|^{-2})^2}\label{e:GE0}
\end{align}
for some $c_2>1$.

Since $\frac{|x-y|}{3} < |x_1-y_1| < 2 |x-y| $,  
Corollary \ref{c:new1}  yields
\begin{align*}
\tfrac{|x_1-y_1|^{-d-2}\phi'(|x_1-y_1|^{-2})}{\phi(|x_1-y_1|^{-2})^2}
\le     
2 \cdot  
3^{d+3} \tfrac{|x-y|^{-d-2}\phi'(9|x-y|^{-2})}{\phi(9|x-y|^{-2})^2}
\le 
2 \cdot  
3^{d+3} \tfrac{|x-y|^{-d-2}\phi'(|x-y|^{-2})}{\phi(|x-y|^{-2})^2}
\end{align*}
and
\begin{align*}
\tfrac{|x_1-y_1|^{-d-2}\phi'(|x_1-y_1|^{-2})}{\phi(|x_1-y_1|^{-2})^2}
\ge   \ 
3^{-1} \cdot 
2^{-d-3}
\tfrac{|x-y|^{-d-2}\phi'(4^{-1}|x-y|^{-2})}{\phi(4^{-1}|x-y|^{-2})^2}
\ge  
3^{-1} \cdot 
2^{-d-3} \tfrac{|x-y|^{-d-2}\phi'(|x-y|^{-2})}{\phi(|x-y|^{-2})^2}.
\end{align*}
Therefore, 
\begin{align}
\tfrac{2^{-d-3}c_2^{-1} c_1^{-2}}{3g_D(x_1)g_D(y_1)}
\tfrac{|x-y|^{-d-2}\phi'(|x-y|^{-2})}{\phi(|x-y|^{-2})^2}
  \,\le\,
\tfrac{G_D(x,y)}{g_D(x)g_D(y)}
\,\le\, \tfrac{2 \cdot 3^{d+3}c_2
c_1^2}{g_D(x_1)g_D(y_1)}\tfrac{|x-y|^{-d-2}\phi'(|x-y|^{-2})}{\phi(|x-y|^{-2})^2
}
.\label{e:GE1}\end{align}

If $r=\frac{\eps_1}{2}$, then $r(x,y)=|x-y| \ge \eps_1$ and so
\[
g_D(A)=g_D(z_0)=C_4\ \ \text{ and }\ \ \delta_D(x_1)\wedge \delta_D(y_1) \ge
\tfrac{\kappa^2 r}{2} = \tfrac{\kappa^2 \eps_1}{4}\,.\]

Thus, in this case,  Lemma \ref{G:g2} yields
\begin{equation}\label{e:GE2}
 c_3^{-1} \le\frac{g_D(A)^2}
{g_D(x_1)g_D(y_1)} \le c_3
\end{equation}
for some $c_3>1$.

In the case  $r<\frac{\eps_1}{2}$ we have  $r(x,y)=|x-y| < \eps_1$ and $r=\frac12
r(x,y)$. Hence \[\delta_D(x_1)\wedge \delta_D(y_1) \ge 
\tfrac{\kappa^2 r}{2} = \tfrac{\kappa^2 r(x,y)}{4}\,.\]

Since $|x_1-A|\vee |y_1-A| \le 5r(x,y)+|x_1-x|+|y_1-y|\leq 5r(x,y)+2\kappa
r\le 6r(x,y)$,
 Theorem \ref{T:Har} applied to $g_D$ 
gives
\begin{equation}\label{e:GE3}
c^{-1}_4 \,\le\,\frac{g_D(A)}{g_D(x_1)} \,\le\,c_4 \quad \mbox{and}
\quad c^{-1}_4 \,\le\,\frac{g_D(A)}{g_D(y_1)} \,\le\,c_4
\end{equation}
for some constant $c_4>0$. Combining
\eqref{e:GE1}-\eqref{e:GE3}, we get
\begin{align*}
&c_5^{-1}\,\tfrac{g_D(x)
g_D(y)}{g_D(A)^2}\tfrac{|x-y|^{-d-2}\phi'(|x-y|^{-2})}{\phi(|x-y|^{-2})^2}\le
G_D(x,y) 
\le c_5\,\tfrac{g_D(x)
g_D(y)}{g_D(A)^2}\tfrac{|x-y|^{-d-2}\phi'(|x-y|^{-2})}{\phi(|x-y|^{-2})^2}
\end{align*}
for all   $A \in \BB(x,y)$.
 \qed

\section{Explicit Green function estimates on bounded $C^{1,1}$-open
sets}\label{s:4}

The purpose of this section is to 
establish 
the  explicit Green function estimates from Theorem \ref{t:Gest} in
the case of  bounded $C^{1,1}$ open sets.

\begin{thm}\label{t:Ge} 
Suppose that $X=(X_t:\, t\ge 0)$ is a transient $d$-dimensional
subordinate
Brownian motion 
where the corresponding subordinator $S$ has the Laplace exponent $\phi$
satisfying {\bf(A-1)}--{\bf(A-5)}.
If 
$D$ is a bounded $C^{1,1}$ domain in $\R^d$ with $C^{1,1}$ characteristics
$(R, \Lambda)$, 
then there exists
$
c=
c(R, \Lambda, \phi,\textrm{diam}(D))>0$ such that
\begin{equation}\label{e:z1}
c^{-1} \,\left(V(\delta_D(x))
   \wedge 1 \right)\,\le \,g_D(x)\, \le\,
c\,\left( V(\delta_D(x))   \wedge 1 \right)  \ \ \text{ for all }\ \ \ x
\in D.
\end{equation}
\end{thm}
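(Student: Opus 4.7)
We prove \eqref{e:z1} by splitting on the size of $\delta_D(x)$. When $\delta_D(x) \ge \kappa^3 \eps_1/64$, Lemma \ref{G:g2} gives $g_D(x) \asymp 1$, and since $V$ is continuous and strictly increasing with $V(0)=0$ while $\delta_D(x)$ lies in the compact interval $[\kappa^3 \eps_1/64, \text{diam}(D)]$, we also have $V(\delta_D(x)) \wedge 1 \asymp 1$. Thus \eqref{e:z1} holds in this regime with constants depending on $R, \Lambda, \phi, \text{diam}(D)$. It therefore suffices to treat the boundary case $\delta_D(x) < \kappa^3 \eps_1/64$, in which $g_D(x) = G_D(x, z_0)$ and $V(\delta_D(x)) \wedge 1 = V(\delta_D(x))$; we must show $G_D(x, z_0) \asymp V(\delta_D(x))$.

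For such $x$, fix $Q \in \partial D$ with $|x - Q| = \delta_D(x)$, let $R_1$ be the constant from Lemma \ref{L:2}, set $r := R_1 \wedge (\delta_D(z_0)/4)$, and write $U := D_Q(r, r)$; this ensures $U \subset D \cap B(Q, R/4)$ (so Lemma \ref{L:Main} applies on $U$) and that $z_0$ is well-separated from $U$. For the lower bound, the wedge region $F \subset D \cap B(Q, R) \setminus U$ appearing in the proof of Lemma \ref{L:2} (cf.\ the derivation of \eqref{eq:ml1}--\eqref{eq:ml2}) simultaneously satisfies $\P_x(X_{\tau_U} \in F) \ge c V(\delta_D(x))$ and $\delta_D(y) \gtrsim r$ uniformly for $y \in F$. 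A finite Harnack chain based on Theorem \ref{T:Har} connects each $y \in F$ to $z_0$ through interior points of depth bounded below, giving $\inf_{y \in F} G_D(y, z_0) \ge c' > 0$. The strong Markov property and the harmonicity of $G_D(\cdot, z_0)$ on $U$ then yield
\[
G_D(x, z_0) = \E_x[G_D(X_{\tau_U}, z_0)] \ge c' \P_x(X_{\tau_U} \in F) \ge c'' V(\delta_D(x)).
\]

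For the upper bound, introduce $h(y) := V(\delta_D(y)) \mathbf{1}_{y \in D \cap B(Q, R)}$ and the harmonic extension $w(y) := \E_y[h(X_{\tau_U})]$. Dynkin's formula combined with Lemma \ref{L:Main} gives
\[
|w(y) - V(\delta_D(y))| \le C_1 \E_y[\tau_U] \le C_1 c_2 V(\delta_D(y)),
\]
and the construction of $R_1$ in Lemma \ref{L:2} enforces $C_1 c_2 \le 1/2$, so $w(y) \asymp V(\delta_D(y))$ uniformly on $U$. Applying the uniform boundary Harnack principle (Theorem \ref{UBHP}) to $G_D(\cdot, z_0)$ and $w$---each regular harmonic in a suitable neighborhood of $Q$ and vanishing on $D^c$---and comparing the resulting ratio at the interior point $x' := A_{r/2}(Q)$, where Lemma \ref{G:g2} gives $G_D(x', z_0) \asymp 1$ while our bound gives $w(x') \asymp V(r)$, we conclude $G_D(x, z_0) \le c\, w(x)/V(r) \le c' V(\delta_D(x))$, with constants depending only on $R, \Lambda, \phi, \text{diam}(D)$.

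The principal technical obstacle is the mismatch between the $C^{1,1}$-adapted set $U = D_Q(r, r)$, on which Lemmas \ref{L:Main} and \ref{L:2} are naturally formulated, and the ball-shaped region $D \cap B(Q, r')$ required in the statement of Theorem \ref{UBHP}. Reconciling these requires either redefining $w$ through exit from $D \cap B(Q, r')$ and controlling $\E_y[\tau_{D \cap B(Q, r')}]$ in terms of $V(\delta_D(y))$ via a strong Markov decomposition through $D_Q(r, r)$ (together with a covering argument for near-boundary points where $|\wt y|$ is not small), or invoking a version of the boundary Harnack comparison tailored to Lipschitz-shaped exit sets via direct manipulation of the Ikeda--Watanabe formula and the Poisson kernel bounds of Proposition \ref{p:Poisson1}. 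Beyond this bookkeeping, the argument closely follows the template of \cite{KSV2}.
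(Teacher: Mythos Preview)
Your proposal uses exactly the ingredients the paper cites (Lemma~\ref{L:2} for the boundary behavior and Theorem~\ref{UBHP} for the comparison), and the interior case via Lemma~\ref{G:g2} is handled correctly. The upper bound through the auxiliary function $w(y)=\E_y[h(X_{\tau_U})]$ together with the uniform boundary Harnack principle is essentially the argument of \cite[Theorem~4.6]{KSV2} that the paper invokes.

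Two points deserve correction. First, your claimed geometric obstacle is illusory: since $D\cap B(Q,s)\subset U=D_Q(r,r)$ for $s$ small enough (depending only on $\Lambda$), any function regular harmonic in $U$ is automatically regular harmonic in $D\cap B(Q,s)$ by the strong Markov property, and both $w$ and $G_D(\cdot,z_0)$ vanish on $D^c$; thus Theorem~\ref{UBHP} applies directly with no reconciliation needed. Second, the assertion ``the construction of $R_1$ in Lemma~\ref{L:2} enforces $C_1c_2\le 1/2$'' is not what that proof yields as written---tracing the constants gives $c_2=C_1^{-1}$, hence $C_1c_2=1$. This is harmless: choose $\lambda_2$ larger in that proof so that $c_1\bigl(\sqrt{\phi(16\lambda_2^2R^{-2})}-\sqrt{\phi(R^{-2})}\bigr)>3C_1$, which forces $\E_x[\tau_U]\le (2C_1)^{-1}V(\delta_D(x))$ and hence $w\asymp V(\delta_D)$ on $U$.

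The genuine gap is in your separate lower-bound argument. Lemma~\ref{L:2} gives $\P_x(X_{\tau_U}\in D)\ge cV(\delta_D(x))$, but not $\P_x(X_{\tau_U}\in F)\ge cV(\delta_D(x))$ for a cone $F$ on which $\delta_D\gtrsim r$. The proof of \eqref{eq:ml2} only yields $\P_x(X_{\tau_U}\in E)\ge c\,\E_x[\tau_U]$ with a constant depending on $r$, and one does not have a matching lower bound $\E_x[\tau_U]\ge c'V(\delta_D(x))$ (the ball estimate gives only $\E_x[\tau_U]\gtrsim V(\delta_D(x))^2$). The clean fix is to drop the wedge argument entirely: once $w\asymp V(\delta_D)$ on $U$ is established as above, Theorem~\ref{UBHP} applied to $G_D(\cdot,z_0)$ and $w$ at the reference point $x'=A_{s/2}(Q)$ (where $G_D(x',z_0)\asymp 1$ and $w(x')\asymp V(\kappa s/2)\asymp 1$) gives the full two-sided bound $G_D(x,z_0)\asymp w(x)\asymp V(\delta_D(x))$ in one stroke.
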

\pf
    The proof follows  the proof of \cite[Theorem 4.6]{KSV2} by using
our Proposition \ref{prop:pot-alpha0}, Lemma \ref{L:2} and  Theorem \ref{UBHP}\,.
\qed

\medskip

\noindent {\bf Proof of  Theorem \ref{t:Gest2}}.
Only using the fact that  $V$ is increasing  and subadditive, the following is proved in \cite[(4.38)]{KSV2}.
\begin{equation}\label{new2}
\frac{(V(\delta_D(x)) \wedge 1)(V(\delta_D(y))\wedge 1)}
{(V(\delta_D(x)\vee\delta_D(y)\vee|x-y|) \wedge 1)^2}
\asymp \frac{V(\delta_D(x))V(\delta_D(y))}
{V^2(\delta_D(x)\vee\delta_D(y)\vee|x-y|)}.
\end{equation}
Thus, when $D$ is connected, Theorem \ref{t:Gest2} follows from
  \eqref{new2} and our Theorems
\ref{t:Gest} and \ref{t:Ge}.

 Next we assume that $D$ is not connected. The proof below is similar to the one
 in \cite[Theorem 3.4]{CKSV2}. 
 
 Let $(R, \Lambda)$ be the
$C^{1,1}$ characteristics of $D$. Note that $D$ has only finitely
many components and the distance between any two distinct components
of $D$ is at least $R>0$. 

Assume first that $x$ and $ y$ are  in
two  distinct  components of $D$. Let $D(x)$ be the component of $D$
that contains $x$. Then by the strong Markov property and \eqref{eq:sub-105} we
obtain
$$
G_D(x, y)= \E_x \left[ G_D\big(X_{\tau_{  D(x)}}, y\big) \right] =
\E_x \left[ \int\limits_0^{\tau_{ D(x)}} \left( \int\limits_{D\setminus D(x)}
j (|X_s-z|) G_D(z, y) dz \right)  ds \right].
$$
Consequently,
\begin{align}\label{e:3.12}
j({\rm diam} (D)) \, \E_x[\tau_{D(x)}] \, \int\limits_{D\setminus D(x)}
G_D(y, z) dz
 & \leq G_D(x, y) \leq j(R) \, \E_x[\tau_{
D(x)}] \, \int\limits_{D\setminus D(x)} G_D(y, z) dz.
\end{align}
Applying the two-sided estimates \eqref{e:Gest21-alt1} established in the
first part of this proof to $D(x)$, after integrating out the second variable we get
\begin{align}\label{e:3.13}
\tfrac{c_1^{-1}}{\sqrt{\phi(\delta_D(x)^{-2})}} =\tfrac{c_1^{-1}}{\sqrt{
\phi(\delta_{D(x)}(x)^{-2})}} \le \E_x
\left[ \tau_{D(x)} \right] \le \tfrac{c_1^{-1}}{\sqrt{
\phi(\delta_{D(x)}(x)^{-2})}}=\tfrac{c_1^{-1}}{\sqrt{\phi(\delta_D(x)^{-2})}}
.
\end{align}
By \eqref{e:3.13} we get
$$
\int\limits_{D\setminus D(x)} G_D(y, z) dz \geq \int\limits_{D(y)} G_{D(y)}
(y, z) dz =\E_y [\tau_{D(y)}] \geq 
\tfrac{c_2}{\sqrt{\phi(\delta_D(y)^{-2})}}.
$$
On the other hand, \eqref{eq:sub-105} and \eqref{e:3.13}
 imply
\begin{eqnarray*}
\int\limits_{D\setminus D(x)} G_D(y, z) dz 
 &\leq & \E_y \big[
\tau_D\big]  = \E_y \Big[ \tau_{D(y)}\Big]
+ \E_y \Big[\E_{X_{\tau_{D(y)}}} [\tau_D ]\Big]\\
&\leq & \tfrac{c_3}{\sqrt{\phi(\delta_D(y)^{-2})}} +
\E_y \left[ \int\limits_0^{\tau_{D(y)}}
\int\limits_{D\setminus D(y)}
j (|X_s- z|) \, \E_z [\tau_D] dz ds \right] \\
&\leq & \tfrac{c_3}{\sqrt{\phi(\delta_D(y)^{-2})}} 
+  j(R)\E_y \big[ \tau_{D(y)}\big]
|D| 
 \, \E_0 [\tau_{B(0, \text{diam(D)})}]\\
 &\leq & \tfrac{c_3}{\sqrt{\phi(\delta_D(y)^{-2})}} 
+ c_4
  \E_y \big[ \tau_{D(y)}\big]
\leq \tfrac{c_5}{\sqrt{\phi(\delta_D(y)^{-2})}}\,.
\end{eqnarray*}
We conclude from the last three displays and  \eqref{e:3.12} that there is a
constant
$c_6\geq 1$ such that
\begin{equation}\label{e:dfc}
\tfrac{c_6^{-1}}{\sqrt{\phi(\delta_D(x)^{-2})
\phi(\delta_D(y)^{-2})}} \leq
G_D(x, y) \leq
 \tfrac{c_6}{\sqrt{\phi(\delta_D(x)^{-2})
\phi(\delta_D(y)^{-2})}}.
\end{equation}
Noting that \[R\leq
 |x-y| \leq {\rm diam}(D)\] when $x$ and $y$ are in different components 
 of 
$D$, by Corollary \ref{c:new1} we obtain \eqref{e:Gest21-alt1}.

Now we assume that $x, y$ are in the same component $U$ of $D$.
Applying \eqref{e:Gest21-alt1} to $U$ we get
\begin{align*}
G_D(x,y) \ge G_U(x,y) &\ge  c_7 \left(1 \wedge
\tfrac{\phi(|x-y|^{-2})}{\sqrt{\phi(\delta_U(x)^{-2})
\phi(\delta_U(y)^{-2})}}\right)\,
\tfrac{|x-y|^{-d-2}\phi'(|x-y|^{-2})}{\phi(|x-y|^{-2})^2}
\\
&=
c_7 \left(1 \wedge
\tfrac{\phi(|x-y|^{-2})}{\sqrt{\phi(\delta_D(x)^{-2})
\phi(\delta_D(y)^{-2})}}\right)\,
\tfrac{|x-y|^{-d-2}\phi'(|x-y|^{-2})}{\phi(|x-y|^{-2})^2}
.
\end{align*}
For the upper bound, we use the strong Markov property, \eqref{eq:sub-105} and
\eqref{e:3.13}--\eqref{e:dfc} to get
\begin{eqnarray}
&&G_D(x,y)\nn\\&=&\, G_U(x,y) +\E_x \left[ G_D(
X_{\tau_{  U}}, y) \right] \nonumber\\
&\le& c_8 \left(1 \wedge
\tfrac{\phi(|x-y|^{-2})}{\sqrt{\phi(\delta_D(x)^{-2})
\phi(\delta_D(y)^{-2})}}\right)\,
\tfrac{|x-y|^{-d-2}\phi'(|x-y|^{-2})}{\phi(|x-y|^{-2})^2}+
\E_x \left[ \int\limits_0^{\tau_{ U}}\int\limits_{D\setminus U}
j (|X_s-z|) G_D(z, y) dzds \right] \nonumber\\
&\le& c_8  \left(1 \wedge
\tfrac{\phi(|x-y|^{-2})}{\sqrt{\phi(\delta_D(x)^{-2})
\phi(\delta_D(y)^{-2})}}\right)\,
\tfrac{|x-y|^{-d-2}\phi'(|x-y|^{-2})}{\phi(|x-y|^{-2})^2}+
j(R) \, \E_x[\tau_{
U}] \, \int\limits_{D\setminus U} G_D(y, z) dz \nonumber\\
&\le& c_8  \left(1 \wedge
\tfrac{\phi(|x-y|^{-2})}{\sqrt{\phi(\delta_D(x)^{-2})
\phi(\delta_D(y)^{-2})}}\right)\,
\tfrac{|x-y|^{-d-2}\phi'(|x-y|^{-2})}{\phi(|x-y|^{-2})^2}
+  \tfrac{c_{9} \int\limits_{D\setminus U}
\tfrac{dz}{\sqrt{\phi(\delta_D(z)^{-2})}}dz}{\sqrt{\phi(\delta_D(x)^{-2})
\phi(\delta_D(y)^{-2})}}\,
.
\label{e:fip}
\end{eqnarray}
Since $D$ is bounded, we get
\begin{eqnarray}
\tfrac{1}{\sqrt{\phi(\delta_D(x)^{-2})\phi(\delta_D(y)^{-2})}}
\int\limits_{D\setminus U} \tfrac{dz}{\sqrt{\phi(\delta_D(z)^{-2})}}
 &\le& \tfrac{|D|}{\sqrt{\phi(\delta_D(x)^{-2})
\phi(\delta_D(y)^{-2})
\phi(\text{diam}(D)^{-2})}}\nn\\
 &\le&c_{10} \left(1 \wedge
\tfrac{\phi(|x-y|^{-2})}{\sqrt{\phi(\delta_D(x)^{-2})
\phi(\delta_D(y)^{-2})}}\right), \nn
\end{eqnarray}
which together with  \eqref{e:fip} and Corollary \ref{c:new1} gives
\begin{eqnarray*}
G_D(x,y)& \le&
c_{11}\left(1 \wedge
\tfrac{\phi(|x-y|^{-2})}{\sqrt{\phi(\delta_D(x)^{-2})
\phi(\delta_D(y)^{-2})}}\right) \\
& \le&
c_{12} \left(1 \wedge
\tfrac{\phi(|x-y|^{-2})}{\sqrt{\phi(\delta_D(x)^{-2})
\phi(\delta_D(y)^{-2})}}\right)\,
\tfrac{\phi'(|x-y|^{-2})}{|x-y|^{d+2}\phi(|x-y|^{-2})^2} .
\end{eqnarray*}
\qed

\noindent {\bf Proof of  Theorem \ref{t:bhp}}.
When $d=1$, the theorem follows from Proposition \ref{e:behofV}, Theorem \ref{t:Sil} and  Theorem \ref{UBHP} (i). 

Note that the result in \cite[Lemma 4.2]{CKSV} is true in our case too. 
By this result,
Theorem \ref{T:Har}, Theorem \ref{UBHP} (i) and Theorem \ref{t:Gest2}, the proof of  Theorem
\ref{t:bhp} is the same as the 
proof of 
\cite[Theorem 1.3]{KSV2}  when $d\geq
3$.

Note that if $D$ is a $C^{1,1}$ open set 
in $\R^{d-1}$ with characteristics $(R, \Lambda)$,  then $D \times \R$ is clearly $C^{1,1}$ open set 
in $\R^{d}$ with the same characteristics $(R, \Lambda)$. Thus 
the case $d= 2$ can be handled in the same way as in the proof of Theorem
\ref{UBHP} (i)\,.
\qed

\vspace{.3cm} \noindent {\bf Acknowledgment}: 
We thank the referee for many valuable comments and suggestions.

\vspace{.1in}

\providecommand{\bysame}{\leavevmode\hbox to3em{\hrulefill}\thinspace}
\providecommand{\MR}{\relax\ifhmode\unskip\space\fi MR }
% \MRhref is called by the amsart/book/proc definition of \MR.
\providecommand{\MRhref}[2]{%
  \href{http://www.ams.org/mathscinet-getitem?mr=#1}{#2}
}
\providecommand{\href}[2]{#2}

%\bibliography{myrefs}

\begin{thebibliography}{CKSV12}

\bibitem[Ber96]{Be}
J.~Bertoin, \emph{L\' evy processes}, Cambridge University Press, Cambridge,
  1996.

\bibitem[BGT87]{BGT}
N.~H. Bingham, C.~M. Goldie, and J.~L. Teugels, \emph{Regular variation},
  Cambridge University Press, Cambridge, 1987.

\bibitem[BKK08]{BKK}
K.~Bogdan, T.~Kulczycki, and M.~Kwa{\'s}nicki, \emph{Estimates and structure of
  {$\alpha$}-harmonic functions}, Probab. Theory Related Fields \textbf{140}
  (2008), no.~3-4, 345--381.

\bibitem[Bog00]{B3}
K.~Bogdan, \emph{Sharp estimates for the {G}reen function in {L}ipschitz
  domains}, J. Math. Anal. Appl. \textbf{243} (2000), no.~2, 326--337.

\bibitem[CKS10]{CKS2}
Z.-Q. Chen, P.~Kim, and R.~Song, \emph{Dirichlet heat kernel estimates for
  ${\Delta}^{\alpha/2} +{\Delta}^{\beta/2}$}, Illinois J. Math. \textbf{54}
  (2010), no.~4, 1357--1392.

\bibitem[CKSV10]{CKSV2}
Z.-Q. Chen, P.~Kim, R.~Song, and Z.~Vondra{\v{c}}ek, \emph{Sharp green function
  estimates for ${\Delta} + {\Delta}^{\alpha/2}$ in ${C}^{1, 1}$ open sets and
  their applications}, Illinois J. Math. \textbf{54} (2010), no.~3, 981--1024.

\bibitem[CKSV12]{CKSV}
\bysame, \emph{Boundary {H}arnack principle for ${\Delta} +
  {\Delta}^{\alpha/2}$}, Trans. Amer. Math. Soc \textbf{364} (2012),
  4169--4205.

\bibitem[CS98]{CS4}
Z.-Q. Chen and R.~Song, \emph{Estimates on {G}reen functions and {P}oisson
  kernels for symmetric stable processes}, Math. Ann. \textbf{312} (1998),
  no.~3, 465--501.

\bibitem[Fri74]{Fris}
B.~Fristedt, \emph{Sample functions of stochastic processes with stationary,
  independent increments}, Advances in probability and related topics, {V}ol.
  3, Dekker, New York, 1974, pp.~241--396.

\bibitem[Han05]{H}
W.~Hansen, \emph{Uniform boundary {H}arnack principle and generalized triangle
  property}, J. Funct. Anal. \textbf{226} (2005), no.~2, 452--484.

\bibitem[IW62]{IW}
N.~Ikeda and S.~Watanabe, \emph{On some relations between the harmonic measure
  and the {L}\' evy measure for a certain class of {M}arkov processes}, J.
  Math. Kyoto Univ. \textbf{2} (1962), 79--95.

\bibitem[KM12]{KM}
P.~Kim and A.~Mimica, \emph{Harnack inequalities for subordinate {B}rownian
  motions}, Electronic Journal of Probability \textbf{17} (2012), \#37.

\bibitem[KMR]{KMR}
M.~Kwa\'snicki, J.~Ma{\l}ecki, and M.~Ryznar, \emph{Suprema of {L}\'evy
  processes}, To appear in Ann. Probab., (2011).

\bibitem[KSV]{KSV5}
P.~Kim, R.~Song, and Z.~Vondra\v{c}ek, \emph{Potential theory of subordinate
  {B}rownian motions with {G}aussian components}, preprint(2012).

\bibitem[KSV12a]{KSV3}
\bysame, \emph{Potential theory for subordinate {B}rownian motions revisited},
  {S}tochastic {A}nalysis and its applications to {M}athematical {F}inance,
  Interdisciplinary Mathematical Sciences, vol.~13, {W}orld {S}cientific, 2012,
  pp.~243--290.

\bibitem[KSV12b]{KSV2}
\bysame, \emph{Two-sided {G}reen function estimates for killed subordinate
  {B}rownian motions}, Proc. London Math. Soc. \textbf{104} (2012), 927--958.

\bibitem[KSV12c]{KSV4}
\bysame, \emph{Uniform boundary {H}arnack principle for rotationally symmetric
  {L}\'evy processes in general open sets}, to appear in Science in China
  (2012).

\bibitem[Kul97]{Kul}
T.~Kulczycki, \emph{Properties of {G}reen function of symmetric stable
  processes}, Probab. Math. Statist. \textbf{17} (1997), no.~2, Acta Univ.
  Wratislav. No. 2029, 339--364.

\bibitem[RSV06]{RSV}
M.~Rao, R.~Song, and Z.~Vondra\v{c}ek, \emph{Green function estimates and
  {H}arnack inequality for subordinate {B}rownian motions}, Potential Anal.
  \textbf{25} (2006), 1--27.

\bibitem[Ryz02]{Ryz}
M.~Ryznar, \emph{Estimates of {G}reen function for relativistic
  {$\alpha$}-stable process}, Potential Anal. \textbf{17} (2002), no.~1, 1--23.

\bibitem[Sat99]{S}
K.-I. Sato, \emph{{L}\' evy processes and infinitely divisible distributions},
  Cambridge University Press, Cambridge, 1999.

\bibitem[Sil80]{Sil}
M.~L. Silverstein, \emph{Classification of coharmonic and coinvariant functions
  for a {L}\'evy process}, Ann. Probab. \textbf{8} (1980), no.~3, 539--575.

\bibitem[Szt00]{Sz}
P.~Sztonyk, \emph{On harmonic measure for {L}\' evy processes}, Probab. Math.
  Statist. \textbf{20} (2000), 383--390.

\end{thebibliography}

\end{document}